\documentclass[a4paper]{amsart}

\usepackage{amscd,amsthm,amsfonts,amssymb,amsmath}
\usepackage{fullpage}
\usepackage[all]{xy}


    \newcommand{\BA}{{\mathbb {A}}} 
    \newcommand{\BC}{{\mathbb {C}}} 
     \newcommand{\BF}{{\mathbb {F}}}
     
    \newcommand{\BI}{{\mathbb {I}}}

     \newcommand{\BP}{{\mathbb {P}}}
    \newcommand{\BQ}{{\mathbb {Q}}} \newcommand{\BR}{{\mathbb {R}}}
     \newcommand{\BT}{{\mathbb {T}}}

     \newcommand{\BZ}{{\mathbb {Z}}}

    \newcommand{\CO}{{\mathcal {O}}}

    \newcommand{\fa}{{\mathfrak{a}}} \newcommand{\fb}{{\mathfrak{b}}}

     \newcommand{\fp}{{\mathfrak{p}}}

     \newcommand{\fH}{{\mathfrak{H}}}

     \newcommand{\fN}{{\mathfrak{N}}}

    \newcommand{\Gal}{{\mathrm{Gal}}} 
    
    \newcommand{\Hom}{{\mathrm{Hom}}}

     \newcommand{\rank}{{\mathrm{rank}}}

    \renewcommand{\mod}{\ \mathrm{mod}\ }

    \newcommand{\Sel}{{\mathrm{Sel}}}

    \newcommand{\Spec}{{\mathrm{Spec}}}


    \font\cyr=wncyr10

    \newcommand{\Sha}{\hbox{\cyr X}}

    \theoremstyle{plain}
    \newtheorem{thm}{Theorem}[section] \newtheorem{cor}[thm]{Corollary}
    \newtheorem{lem}[thm]{Lemma}  \newtheorem{prop}[thm]{Proposition}
     \newtheorem{defn}[thm]{Definition}

\theoremstyle{remark} \newtheorem{remark}[thm]{Remark}
\theoremstyle{remark} 
\theoremstyle{remark} \newtheorem{example}{Example}


    \numberwithin{equation}{section}

\begin{document}

\title{Arithmetic of Eisenstein quotients}
\author{Yuan Ren}


\maketitle

\begin{abstract}
In this paper, we will study the arithmetic of the Eisenstein part of the modular Jacobians. In the first section, we introduce some general preliminaries of the arithmetic theory of modular curves that we will need later. In the second section, we give an example of modular abelian varieties due to Gross and study its properties in some details. In the third section, we define Eisenstein quotients of the modular Jacobians in general and give a criterion of the non-triviality of Heegner points on such Eisenstein quotients. The last two sections return to the concrete examples when the level of the modular Jacobian ia a prime or a square of a prime.
\end{abstract}

\tableofcontents

\section{Modular curves}
\subsection{Modular curves}
Let $\fH=\{z\in\BC:Im(z)>0\}$ be the upper half plane, $\fH=\fH$ and $Gl^{+}_2(\BR)=\{g\in Gl_2(\BR):det(g)>0\}$. There is an action of $Gl^{+}_2(\BR)$ on $\fH$ as
\[Gl^{+}_2(\BR)\times\fH\rightarrow\fH,(g,z)\mapsto gz\]
where $gz=\frac{az+b}{cz+d}$ for any $g=\left(
                                          \begin{array}{cc}
                                            a & b \\
                                            c & d \\
                                          \end{array}
                                        \right)
$.

A subgroup $\Gamma$ of $SL_2(\BZ)$ is called a congruence subgroup if $\Gamma\supseteq\Gamma(N)$ for some positive integer $N$. Here $\Gamma(N)$ is the subgroup of $SL_2(\BZ)$ which is congruent to $\left(
                                     \begin{array}{cc}
                                       1 & 0 \\
                                       0 & 1 \\
                                     \end{array}
                                   \right)
$ modulo $N$.

For example, for any positive integer $N$, the group
\[\Gamma_0(N):=\{\left(
                                          \begin{array}{cc}
                                            a & b \\
                                            c & d \\
                                          \end{array}
                                        \right)\in SL_2(\BZ):c=0\pmod N\}\]
is a congruence subgroup.

In the following, we always assume $\Gamma$ to be a congruence subgroup.

let $Y_{\Gamma}=\Gamma\setminus\fH$ be the quotient space, then it is shown in \cite{Sh} that there is a structure of Reimaa surface on $Y_{\Gamma}$. More over, let $\fH^*=\fH\bigcup\BP^1(\BQ)$ acted by $SL_2(\BZ)$ as the above formula, and define
\[X_{\Gamma}=\Gamma\setminus\fH^*\]

$X_{\Gamma}$ is a compact Riemann surface which is the compactification of $Y_{\Gamma}$. Let $S_{\Gamma}=\Gamma\setminus\BP^1(\BQ)$ and call it the set of cusps of $X_{\Gamma}$, then it is easy to see $S_{\Gamma}$ is a finite subset of $X_{\Gamma}$ and $X_{\Gamma}=Y_{\Gamma}\bigcup S_{\Gamma}$. We shall call $X_{\Gamma}$ the modular curve of level $\Gamma$.

It is known that a compact Riemann surface is algebraic over $\BC$ (GAGA). The important thing here is that these modular curves have algebraic models defined over number fields. Let's explains this for the curves $X_0(N):=\Gamma_0(N)\setminus\fH^*$.

\subsection{Moduli interpretation}
An elliptic curve $E$ over the complex number field $\BC$ is a Riemann surface of genus one. So as a complex manifold, it is of the form $\BC/L$ for some lattice $L$ and with the natural group structure, it is an abelian variety of dimension one over $\BC$. Two such abelian varieties $E_i=\BC/L_i$ ($i=1,2$) are isomorphic if and only if there is a number $\lambda\in\BC$ such that $L_2=\lambda L_1$, and the corresponding isomorphism is just the one induced by multiplication by $\lambda$. Hence we have a natural bijection between the upper half plane and the isomorphism classes of elliptic curves over $\BC$
\[SL_2(\BZ)\setminus\fH\rightarrow\{E/\BC\}/\thicksim\]
which sends $z\in\fH$ to the class represented by the elliptic curve $E_z=\BC/(\BZ\cdot z+\BZ)$.

More generally, we have the following
\begin{prop}\label{moduli}
For any positive integer $N$, here is a natural bijection between $Y_0(N)$ and the isomorphism classes of the pairs of elliptic curves over $\BC$ with a cyclic group of order $N$
\[Y_0(N)\rightarrow\{E/\BC\}/\thicksim\]
which sends $z\in\fH$ to the class represented by the elliptic curve $(E_z,<\frac{1}{N}>)$.
\end{prop}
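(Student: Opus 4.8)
The plan is to exhibit the inverse dictionary between complex tori with level structure and points of $\fH$, and then to show that the ambiguity in this dictionary is measured exactly by $\Gamma_0(N)$. Recall from the discussion preceding the proposition that an elliptic curve over $\BC$ is $\BC/L$ for a lattice $L$, that an isomorphism $\BC/L_1\to\BC/L_2$ is multiplication by some $\lambda\in\BC^\times$ with $\lambda L_1=L_2$, and that such a $\lambda$ carries a subgroup $C_1$ onto $\lambda C_1$. For $z\in\fH$ set $L_z=\BZ z+\BZ$; then the datum of a cyclic subgroup $C\subset\BC/L$ of order $N$ is the same as an intermediate lattice $L\subset L'\subset\frac1N L$ with $L'/L=C$ cyclic of order $N$, and under this translation $\langle\frac1N\rangle\subset\BC/L_z$ corresponds to $L'=\BZ z+\frac1N\BZ$. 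I must check three things: that $z\mapsto(E_z,\langle\frac1N\rangle)$ depends only on the $\Gamma_0(N)$-orbit of $z$ (well-definedness on $Y_0(N)$), injectivity, and surjectivity.

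For surjectivity I would start from an arbitrary pair $(E,C)=(\BC/L,L'/L)$ and apply the elementary divisor theorem to the inclusion $L\subset L'$: since $L'/L$ is cyclic of order $N$, there is a $\BZ$-basis $(e_1,e_2)$ of $L'$ such that $(e_1,Ne_2)$ is a basis of $L$ and $C$ is generated by the class of $e_2$. After replacing $e_1$ by $-e_1$ if necessary I may assume $\tau:=e_1/(Ne_2)\in\fH$, and then scaling by $\lambda=1/(Ne_2)$ identifies $\BC/L$ with $E_\tau=\BC/L_\tau$ and carries $C$ onto $\langle\lambda e_2\rangle=\langle\frac1N\rangle$. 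Hence every pair is isomorphic to one in the image.

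For injectivity and well-definedness simultaneously, suppose $(E_{z_1},\langle\frac1N\rangle)\cong(E_{z_2},\langle\frac1N\rangle)$ via multiplication by $\lambda$. Then $\lambda L_{z_1}=L_{z_2}$, so writing the basis $(\lambda z_1,\lambda)$ of $L_{z_2}$ in terms of $(z_2,1)$ yields a matrix $\gamma=\begin{pmatrix} a & b \\ c & d\end{pmatrix}\in\GL_2(\BZ)$ with $\lambda z_1=az_2+b$ and $\lambda=cz_2+d$; comparison of orientations (both $z_i\in\fH$) forces $\det\gamma=1$, and dividing gives $z_1=\gamma z_2$ with $\lambda=cz_2+d$. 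The level condition is that $\lambda\cdot\frac1N$ lie in $\langle\frac1N\rangle$ modulo $L_{z_2}$; writing $\frac{\lambda}{N}=\frac cN z_2+\frac dN$ in the coordinates of $E_{z_2}[N]=\frac1N L_{z_2}/L_{z_2}$ afforded by $(\frac1N z_2,\frac1N)$, its class is $(c,d)\bmod N$, which lies in the subgroup generated by $(0,1)$ precisely when $c\equiv 0\pmod N$, that is, $\gamma\in\Gamma_0(N)$ (the resulting generator having order $N$ automatically, since $ad\equiv 1\pmod N$). Thus the two pairs are isomorphic exactly when $z_1$ and $z_2$ lie in the same $\Gamma_0(N)$-orbit, which yields both well-definedness on $Y_0(N)$ and injectivity.

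The one genuinely delicate point, and the place where I would concentrate the care, is this last computation: pinning down that preservation of the subgroup $\langle\frac1N\rangle$ is equivalent to the single congruence $c\equiv 0\pmod N$, neither weaker nor stronger, and making sure the orientation argument correctly excludes the determinant $-1$ case. The elementary divisor normalization in the surjectivity step is routine once set up, and the scaling dictionary is exactly the one already used for the bijection $\SL_2(\BZ)\setminus\fH\to\{E/\BC\}/\thicksim$ in the text; the crux is verifying that the extra level datum cuts the symmetry group down from $\SL_2(\BZ)$ to precisely $\Gamma_0(N)$.
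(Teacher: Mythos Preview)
Your argument is correct and is the standard textbook proof (as in Diamond--Shurman or Shimura). Note, however, that the paper does not actually supply a proof of this proposition: it is stated as a preliminary fact and immediately followed by a remark on algebraic models, so there is nothing in the paper to compare your approach against. Your write-up is more than adequate for this background result; the only cosmetic point is that in the surjectivity step you might state explicitly that the elementary divisor theorem gives $d_1\mid d_2$ with $L'/L\cong\BZ/d_1\oplus\BZ/d_2$, and cyclicity forces $d_1=1$, $d_2=N$ --- you have this implicitly but it is the place a reader might pause.
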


More over, one can define elliptic curves with level structure algebraically, then the solution of the corresponding moduli problem will gives the desired model over canonically define number field (\cite{Mum}).

\begin{remark}
In Delinge and Rapapport's paper, they also gives a moduli intercalation of the compact modular curve in terms of generalized elliptic curves with level structure.
\end{remark}

\subsection{Hecke operators}
(modular forms and its relation to differential;Definition of Hecke operator;eigenform and Galois representation of it;modular abelian variety)
In this section, we write $X=X_0(N)/\BQ$, $J=J_0(N)/\BQ$ be its Jacobian and $i:X\rightarrow J$ the canonical morphism mapping $\infty$ to the zero. Recall that $Y=Y_0(N)=\Gamma_0(N)\setminus \fH$ is an open affine sub-scheme of $X$ which classifies the isomorphism class of pairs $[E,D]$, where $E$ is an elliptic curve and $D$ is a subgroup scheme isomorphic to $\BZ/N\BZ$.

For any two curves $C_1$, $C_2$ over some field $F$, a correspondence $T:C_1\rightsquigarrow C_2$ is by definition a triple $(C_3,\alpha,\beta)$, where $C_3$ is another curve and $\alpha$, $\beta$ are morphisms from $C_3$ to $C_1$ and $C_2$ respectively. From a correspondence $T$, one deduce two morphisms on Jacobians: the push forward $T_*:J(C_1)\rightarrow J(C_2)$ defined as $\beta_*\circ\alpha^*$ and the pull back $T^*:J(C_2)\rightarrow J(C_1)$ defined as $\alpha_*\circ\beta^*$.

For any prime $\ell$, let $X_0(N,\ell)$ to be the modular curve classifies the isomorphism classes $[E,D,C]$ with $E$ an elliptic curve, $D$ a subgroup scheme isomorphic to $\BZ/N\BZ$, $C$ a subgroup scheme isomorphic to $\BZ/\ell\BZ$ such that $D\bigcap C=0$. Define
\[\alpha_{\ell}:X_0(N,\ell)\rightarrow X, [E,D,C]\rightarrow[E,D]\]
and
\[\beta_{\ell}:X_0(N,\ell)\rightarrow X, [E,D,C]\rightarrow[E/C,D+C/C]\]

The Hecke correspondence $T_{\ell}$ is defined to be $(X_0(N,\ell),\alpha_{\ell},\beta_{\ell})$. As mentioned above, we will have two morphisms $T_{\ell,*}$ and $T^*_{\ell}$ on $J$.

\begin{prop}\label{two kinds of Hecke operator}
Notations as above, then we have
\[T_{\ell,*}=T^*_{\ell}=\left(
                          \begin{array}{cc}
                            \ell & 0 \\
                            0 & 1 \\
                          \end{array}
                        \right)
+\sum^{\ell-1}_{k=0}\left(
                                             \begin{array}{cc}
                                               1 & k \\
                                               0 & \ell \\
                                             \end{array}
                                           \right)
,\ell\nmid N\]
and when $\ell\mid N$, we have
\[T_{\ell,*}=\sum^{\ell-1}_{k=0}\left(
                                             \begin{array}{cc}
                                               1 & k \\
                                               0 & \ell \\
                                             \end{array}
                                           \right)\]
and
\[T^*_{\ell}=\sum^{\ell-1}_{k=0}\left(
                                  \begin{array}{cc}
                                    \ell & 0 \\
                                    Nk & 1 \\
                                  \end{array}
                                \right)
\]
\end{prop}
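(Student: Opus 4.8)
The plan is to translate both correspondences into the complex-analytic lattice picture supplied by Proposition~\ref{moduli}, compute them on the open part $Y_0(N)$, where every point has the form $[E_z,\langle\frac1N\rangle]$ with $E_z=\BC/(\BZ z+\BZ)$, and then match the resulting formal sums of points with the stated coset representatives acting by $z\mapsto\gamma z$. Since $Y_0(N)$ is dense in $X_0(N)$ and the induced endomorphisms of $J$ are determined by their effect on (the classes of) a dense set of points, it suffices to carry out the computation on $Y_0(N)$.

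First I would describe the fibre of $\alpha_\ell$ over $[E_z,\langle\frac1N\rangle]$. A triple $[E_z,D,C]$ lying over it is a choice of cyclic $C\subset E_z$ of order $\ell$ with $C\cap D=0$, equivalently a lattice $L'\supseteq L_z=\BZ z+\BZ$ with $[L':L_z]=\ell$. The $\ell+1$ such lattices are $L'_k=\BZ\frac{z+k}{\ell}+\BZ$ for $k=0,\dots,\ell-1$ and $L'_\infty=\BZ z+\BZ\frac1\ell$, giving subgroups $C_k$ and $C_\infty=\langle\frac1\ell\rangle$. When $\ell\nmid N$ all $\ell+1$ occur; when $\ell\mid N$ the condition $C\cap D=0$ excludes exactly $C_\infty$, since $\langle\frac1\ell\rangle\subseteq\langle\frac1N\rangle=D$. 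This is the source of the two different ranges of summation.

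Next I would compute $T_{\ell,*}=\beta_{\ell,*}\circ\alpha_\ell^{*}$ on $[E_z,\langle\frac1N\rangle]$ as $\sum_C[E_z/C,(D+C)/C]$ and renormalise each quotient lattice to the form $\BZ w+\BZ$. For $C_k$ one has $E_z/C_k=\BC/L'_k\cong E_{(z+k)/\ell}$ with the level structure preserved, i.e.\ the representative $\begin{pmatrix}1&k\\0&\ell\end{pmatrix}$; for $C_\infty$, scaling $L'_\infty$ by $\ell$ gives $E_{\ell z}$, i.e.\ $\begin{pmatrix}\ell&0\\0&1\end{pmatrix}$, and here one checks that $\tfrac1N\mapsto\tfrac\ell N$ still generates $\langle\frac1N\rangle$ precisely because $\gcd(\ell,N)=1$. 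This yields the two displayed formulas for $T_{\ell,*}$. For the pullback $T_\ell^{*}=\alpha_{\ell,*}\circ\beta_\ell^{*}$ I would run the dual computation: over $[E_{z'},\langle\frac1N\rangle]$ the relevant data are the index-$\ell$ sublattices $L\subseteq L'=\BZ z'+\BZ$ together with a lift of $\tfrac1N$ to $\BC/L$ of exact order $N$ meeting $C=L'/L$ trivially. For $\ell\mid N$ the valid sublattices turn out to be $L_k=\BZ\,\ell z'+\BZ(Nkz'+1)$ with order-$N$ point $kz'+\tfrac1N$, which after scaling by $(Nkz'+1)^{-1}$ gives exactly $E_{g_k z'}$ with $g_k=\begin{pmatrix}\ell&0\\Nk&1\end{pmatrix}$, whereas $L'_\infty=\BZ z'+\BZ\ell$ admits no lift of $\tfrac1N$ of order $N$; this reproduces the stated formula for $T_\ell^{*}$.

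Finally, for $\ell\nmid N$ the equality $T_{\ell,*}=T_\ell^{*}$ is cleanest to obtain not from the matrices but from an Atkin--Lehner involution $w_\ell$ of $X_0(N,\ell)$ sending $[E,D,C]$ to $[E/C,(D+C)/C,E[\ell]/C]$. One checks $w_\ell^2=\id$ together with $\alpha_\ell\circ w_\ell=\beta_\ell$ and $\beta_\ell\circ w_\ell=\alpha_\ell$ (using $\ell\nmid N$, so that $E[\ell]/C$ is cyclic of order $\ell$ meeting $(D+C)/C$ trivially), whence $\beta_{\ell,*}\alpha_\ell^{*}=\alpha_{\ell,*}w_{\ell,*}w_\ell^{*}\beta_\ell^{*}=\alpha_{\ell,*}\beta_\ell^{*}$. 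I expect the main obstacle to be the $\ell\mid N$ computation of $T_\ell^{*}$: unlike the push-forward, the pullback requires tracking a sublattice and a compatible level structure simultaneously, and isolating which sublattices admit an order-$N$ lift of $\tfrac1N$ disjoint from $C$ is exactly what forces the unfamiliar representatives $\begin{pmatrix}\ell&0\\Nk&1\end{pmatrix}$ in place of upper-triangular ones.
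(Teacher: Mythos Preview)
Your approach is essentially the paper's: both arguments pass to the complex moduli description of $Y_0(N)$, enumerate the fibres of $\alpha_\ell$ and $\beta_\ell$ over a generic point $[E_z,\langle\tfrac1N\rangle]$ via the $\ell+1$ (or $\ell$) cyclic $\ell$-subgroups of $E_z$, and then renormalise the resulting lattices to read off the matrix representatives. Your handling of the delicate case $T_\ell^{*}$ for $\ell\mid N$ --- rewriting the sublattice as $\BZ\,\ell z+\BZ(Nkz+1)$ and rescaling by $(Nkz+1)^{-1}$ --- is exactly what the paper does.

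The one genuine difference is how you establish $T_{\ell,*}=T_\ell^{*}$ when $\ell\nmid N$. The paper simply computes $\beta_\ell^{-1}$ directly in that case and observes that the answer coincides with the $T_{\ell,*}$ formula. You instead invoke the Atkin--Lehner involution $w_\ell$ on $X_0(N,\ell)$ swapping $\alpha_\ell$ and $\beta_\ell$, which gives the equality conceptually without a second explicit fibre computation. Your route is cleaner and explains \emph{why} the two operators agree (the correspondence is self-dual via $w_\ell$), at the cost of checking that $w_\ell$ is well-defined on $X_0(N,\ell)$, which uses $\ell\nmid N$; the paper's direct computation is more pedestrian but entirely self-contained. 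Either way the argument is complete.
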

\begin{proof}
First, we assume $\ell\nmid N$. For any $(\BC/(\BZ\cdot z+\BZ),<\frac{1}{N}>)\in X$, we have
\[\alpha^{-1}(\BC/(\BZ\cdot z+\BZ),<\frac{1}{N}>)=(\BC/(\BZ\cdot z+\BZ),<\frac{1}{N}>+<\frac{1}{\ell}>)+\sum^{\ell-1}_{k=0}(\BC/(\BZ\cdot z+\BZ),<\frac{1}{N}>+<\frac{z+k}{\ell}>)\]
so we have
\[T_{\ell,*}(\BC/(\BZ\cdot z+\BZ),<\frac{1}{N}>)=(\BC/(\BZ\cdot\ell z+\BZ),<\frac{1}{N}>)+\sum^{\ell-1}_{k=0}(\BC/(\BZ\cdot\frac{z+k}{\ell}+\BZ),<\frac{1}{N}>)\]
Similarly, because we have
\[\beta^{-1}(\BC/(\BZ\cdot z+\BZ),<\frac{1}{N}>)=(\BC/(\BZ\cdot\ell z+\BZ),<\frac{1}{N}>+<z>)+\sum^{\ell-1}_{k=0}(\BC/(\BZ\cdot\frac{z+k}{\ell}+\BZ),<\frac{1}{N}>+<\frac{1}{\ell}>)\]
so we also have
\[T^*_{\ell}(\BC/(\BZ\cdot z+\BZ),<\frac{1}{N}>)=(\BC/(\BZ\cdot\ell z+\BZ),<\frac{1}{N}>)+\sum^{\ell-1}_{k=0}(\BC/(\BZ\cdot\frac{z+k}{\ell}+\BZ),<\frac{1}{N}>)\]

When $\ell\mid N$, the identity for $T_{\ell,*}$ is similar except we need to omit the term $(\BC/(\BZ\cdot z+\BZ),<\frac{1}{N}>+<\frac{1}{\ell}>)$ because $<\frac{1}{N}>\supseteq\frac{1}{\ell}$.

On the other hand, we have
\[\beta^{-1}(\BC/(\BZ\cdot z+\BZ),<\frac{1}{N}>)=\sum^{\ell-1}_{k=0}(\BC/(\BZ\cdot\ell z+\BZ),<\frac{1}{N}+kz>)=\sum^{\ell-1}_{k=0}(\BC/(\BZ\cdot\ell z+\BZ),<\frac{Nkz+1}{N}>)\]
so we have
\[\beta^{-1}(\BC/(\BZ\cdot z+\BZ),<\frac{1}{N}>)=\sum^{\ell-1}_{k=0}(\BC/(\BZ\cdot\ell z+\BZ),<\frac{Nkz+1}{N}>)\]
\[=\sum^{\ell-1}_{k=0}(\BC/(\BZ\cdot\ell z+\BZ\cdot Nkz+1),<\frac{Nkz+1}{N}>)\]
\[=\sum^{\ell-1}_{k=0}(\BC/(\BZ\cdot\frac{\ell z}{Nkz+1}+\BZ),<\frac{1}{N}>)\]
This proves our second claim.

\end{proof}

\begin{defn}
For any $N$, we define $\BT=\BZ[\{T_{\ell,*}\}_\ell]\subseteq End(J)$, and call it the (full) Hecke algebra of level $N$.
\end{defn}

For simplicity, we will write $T_{\ell}$ instead of $T_{\ell,*}$ in the following.

\section{On the 2-Selmer groups of the Gross curves}
In this section, we will study the $2$-Selmer group of some elliptic curves constructed by Gross in \cite{G3}. We will show in later sections the relation of these curves with some $2$-Eisenstein quotients of level a square of a prime.

\subsection{CM theory and descent}
Let $F$ be a field, an elliptic curve over $F$ is a smooth curve of genus one over $F$ with an $F$-rational point $O$. It is well known that such a curve admits a structure of abelian variety of dimension one such that $O$ is zero element.

Suppose $F$ is a number field, the Mordell-Weil theorem claims that the group of $F$-rational points $E(F)$ ia a finitely generated abelian group, so that $E(F)\simeq\BZ^{\oplus r}\bigoplus E(F)_{tor}$ with $E(F)_{tor}$ a finite group, for some non-negative integer $r$ called the rank of $E$ over $F$. In number theory, we are interested in determining the rank so that solve the Diophantine question. For this, there is the classical descent method.

Recall that from
  \[\xymatrix{
  0 \ar[r] & E(F)[n] \ar[r] & E(\bar{F}) \ar[r]^{n} & E(\bar{F}) \ar[r] &
  0}\]
we get the following diagram£º
  \[\xymatrix{
  0 \ar[r]& E()/{n E^{(d)}(p)}\ar[r]^\delta\ar[d]& H^1(G_F,E[n])\ar[r]\ar[d]&H^1(G_F,E[n])\ar[r]\ar[d]&0 \\
  0 \ar[r]&\prod E(F_v)/{n E^{(d)}(p)(F_v)}\ar[r]^{\delta_v}& \prod H^1(G_{F_v},E[n])\ar[r] &\prod H^1(G_{F_v},E)\ar[r]&0
  }\]

\begin{defn}
Define the n-Selmer group of $E$ over $F$ to be
\[\Sel_n(E/F)=ker(H^1(G_F,E[n])\rightarrow \prod H^1(G_{F_v},E))\]
and the Tate-Shafarevich group of $E$ over $F$ to be
\[\Sha(E/F)=ker(H^1(G_F,E)\rightarrow \prod H^1(G_{F_v},E))\]
\end{defn}
It follows that there is an exact sequence
$$
\xymatrix@C=0.5cm{
  0 \ar[r] & E(F)/{n E(F)} \ar[rr]^{\delta} && \Sel_{n}(E/F) \ar[rr]^{} && \Sha(E/F)[n] \ar[r] & 0 }
$$
The reason to introduce the Selmer groups is that $\Sel_{n}(E/F)$ is finite and relatively easy to compute, so that one can use them to obtain an upper bound of the rank. In the following, we will focus on the elliptic curves with complex multiplication and analyze the above exact sequence in some details for the so called $\BQ$-curves when $n=2$.

First we introduce the following notations:
\begin{itemize}
\item $K=$ an imaginary quadratic extension over $\BQ$;
\item $\CO=$ the integer ring of $K$;
\item $ELL(\CO)$=\{elliptic curve over $\BC$ with CM by $\CO\}$ up to
$C$-isomorphism;
\item $H=$ the Hilbert class field of $K$;
\item $ELL_H(\CO)$=\{elliptic curve over $H$ with CM by $\CO\}$ up to
$H$-isomorphism;
\item $ELL^\circ_H(\CO)=\{$elliptic curve over $H$ with CM by $\CO\}$ up to
$H$-isogeny;
\end{itemize}
Recall the following basic facts from CM theory, c.f. \cite{G}:

\begin{prop}
For any $E$ in $ELL_H(\CO)$, we have an associated continuous
homomorphism $\chi_E: \BA^\times_H\rightarrow{K^\times}$ such that

(i) $\chi|_{H^\times}=\mathbf{N}^H_K$, where $\mathbf{N}^H_K$ is the
norm map from $H$ to $K$;

(ii) $E$ has good reduction at $\beta\in \Spec\CO_H$ if and only if
$\chi_E$ is unramified at $\beta$. If $E$ has good reduction at
$\beta\in\Spec\CO_H$, then ${\chi_E}(\pi_\beta)$ is the unique
lifting of the $\mathbf{N}(\beta)$-th Frobenius of $
\widetilde{E}\pmod {\beta}$;

(iii) for any rational prime $\mathrm{\ell}$, we have
$\rho_\mathrm{\ell}={\chi_E}\cdot(\mathbf{N}^{H_\ell}_{K_\ell})^{-1}$,
where $\rho_\mathrm{\ell}:{G_H}\rightarrow {T_\ell}$ is the
$\ell$-adic Galois representation, ${H_\ell}=\prod_{w|\ell}{H_w}$
and $\mathbf{N}^{H_\ell}_{K_\ell}$ is the norm.
\end{prop}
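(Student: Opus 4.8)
The statement to prove is the Main Theorem of complex multiplication for an elliptic curve $E/H$ with $\End_H(E)\cong\CO$, so the natural plan is to build the Galois representation attached to $E$, show that it is abelian, translate it through class field theory into an idelic character, and finally pin that character down by the reciprocity law for CM. First I would fix a normalized isomorphism $\CO\xrightarrow{\sim}\End_H(E)$ and, for each rational prime $\ell$, form the Tate module $T_\ell(E)=\varprojlim_n E[\ell^n]$. Because every endomorphism of $E$ is already defined over $H$, the action of $G_H$ on $T_\ell(E)$ commutes with the $\CO$-action; since $T_\ell(E)$ is free of rank one over $\CO_\ell:=\CO\otimes_\BZ\BZ_\ell$, this exhibits the $\ell$-adic representation $\rho_\ell$ as a continuous abelian character $G_H\to\Aut_{\CO_\ell}(T_\ell(E))\cong\CO_\ell^\times$. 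By the criterion of N\'eron--Ogg--Shafarevich, $\rho_\ell$ is unramified at every prime $\beta\nmid\ell$ of good reduction, so through the Artin reciprocity map $\Art_H\colon\BA^\times_H/H^\times\to G_H^{\ab}$ it becomes a continuous character of $\BA^\times_H$ valued in $\CO_\ell^\times$.

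The heart of the argument is the identification of the geometric Frobenius at a good prime $\beta\nmid\ell$. Reducing $E$ modulo $\beta$ and using that the reduction map $\End(E)\to\End(\wt E\!\pmod\beta)$ is injective, I would show that $\Frob_\beta$ acts on $T_\ell(E)$ as multiplication by a single element $\pi_\beta\in\CO$ which reduces to the $\mathbf{N}(\beta)$-power Frobenius endomorphism of $\wt E\!\pmod\beta$. This $\pi_\beta$ lies in $K^\times$ and, crucially, is independent of $\ell$. Consequently the characters $(\rho_\ell)_\ell$ agree on the finite ideles and take algebraic values, so they glue to a single continuous homomorphism $\chi_E\colon\BA^\times_H\to K^\times$ carrying a uniformizer at $\beta$ to $\pi_\beta$; this is exactly (ii). Comparing $\chi_E$ with the individual $\rho_\ell$ at the primes above $\ell$, where the local representation is the Lubin--Tate character whose restriction to the local units is the norm, produces the correction factor and gives (iii), namely $\rho_\ell=\chi_E\cdot(\mathbf{N}^{H_\ell}_{K_\ell})^{-1}$. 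Finally, evaluating $\chi_E$ on the principal ideles $H^\times$ and invoking the reciprocity law forces $\chi_E|_{H^\times}=\mathbf{N}^H_K$, which is (i).

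The main obstacle is the identification of $\pi_\beta$ in the second paragraph: that the Frobenius is not merely some element of $\CO_\ell^\times$ but a fixed algebraic element of $\CO$ lifting the Frobenius of the reduction is precisely the content of Shimura's reciprocity law, and it rests on the good-reduction theory of CM elliptic curves. I would either cite \cite{G} for this input or reprove it by analyzing $\wt E\!\pmod\beta$, using that the reduction of a CM curve has its Frobenius endomorphism in the image of $\CO$ together with the injectivity of reduction on the Tate module away from the residue characteristic. Everything else --- the abelianness of $\rho_\ell$, the unramifiedness at good primes, and the passage to ideles --- is formal once this reciprocity statement is in hand.
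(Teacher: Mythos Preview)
The paper does not prove this proposition; it is stated as a recalled fact from CM theory with a reference to \cite{G}. Your outline is the standard Serre--Tate construction and is substantively correct, but there is one logical wobble worth fixing.

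You say the characters $(\rho_\ell)_\ell$ ``agree on the finite ideles and glue to a single continuous homomorphism $\chi_E:\BA^\times_H\to K^\times$'', and then afterwards derive the relation $\rho_\ell=\chi_E\cdot(\mathbf{N}^{H_\ell}_{K_\ell})^{-1}$. These two statements are in tension: each $\rho_\ell$ takes values in $\CO_\ell^\times$, not in $K^\times$, and the $\rho_\ell$ do \emph{not} literally agree as idelic characters (they live in different target groups, and at primes above $\ell$ the character $\rho_\ell$ is genuinely $\ell$-adic). What is true, and what you clearly know, is that on uniformizers at good primes $\beta\nmid\ell$ the value $\rho_\ell(\pi_\beta)=\pi_\beta\in K^\times$ is algebraic and independent of $\ell$. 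The clean construction is therefore to \emph{define} $\chi_E$ by the rule $\chi_E(s)=\rho_\ell(\Art_H(s))\cdot \mathbf{N}^{H_\ell}_{K_\ell}(s_\ell)$ for any $\ell$, check via the Frobenius computation that the right-hand side lies in $K^\times$ and is independent of $\ell$, and then read off (i)--(iii). In other words, the norm correction is not a consequence of the gluing but the mechanism that makes gluing possible. Once you reorder the argument this way, the rest of your plan goes through; the appeal to the Lubin--Tate description at primes above $\ell$ is then unnecessary for (iii), which follows by density of Frobenii.
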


We now review the $H$-isomorphic and $H$-isogenous classifications
of elliptic curves with CM by $\CO$.

\begin{thm}\label{thm:class} Let $J=\{j(E)\mid E\in{ELL(\CO)}\}$, and $\Sigma$ be the set of
continuous homomorphism $\chi:\BA^\times_H\rightarrow{K^\times}$ such
that $\chi|_{H^\times}=\mathbf{N}^H_K$, where $\mathbf{N}^H_K$ is
the norm map from $H$ to $K$, then

(i)There is a bijection
  \[{ELL_H(O)}\rightarrow{J\times\Sigma},\
  E/H\mapsto{(j(E),{\chi_E})};\]

(ii)There is a bijection
  \[{ELL^{\circ}_H(O)}\rightarrow{\Sigma},\
  E/H\mapsto{\chi_E}.\]
\end{thm}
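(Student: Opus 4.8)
The plan is to prove both classification bijections by using the associated Hecke character $\chi_E$ as the central invariant, combined with the main theorem of complex multiplication. The key structural fact I would invoke is the previous proposition, which attaches to each $E\in ELL_H(\CO)$ a continuous homomorphism $\chi_E\colon\BA^\times_H\to K^\times$ satisfying $\chi_E|_{H^\times}=\mathbf{N}^H_K$, so that $\chi_E$ automatically lands in $\Sigma$. The map $E\mapsto(j(E),\chi_E)$ in part (i) and $E\mapsto\chi_E$ in part (ii) are therefore well-defined; the content is surjectivity and the correct notion of injectivity in each case.

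\medskip\noindent
\textbf{Part (i).} First I would establish injectivity. Suppose $E_1,E_2\in ELL_H(\CO)$ satisfy $j(E_1)=j(E_2)$ and $\chi_{E_1}=\chi_{E_2}$. Equality of $j$-invariants gives a $\bar K$-isomorphism $E_1\cong E_2$, hence an isomorphism defined over some finite extension, which differs from an $H$-isomorphism by a twist. The twist is measured by a class in $H^1(G_H,\Aut(E))$, and since $\CO$ has CM the relevant automorphisms come from $\CO^\times$; the equality of Hecke characters $\chi_{E_1}=\chi_{E_2}$ forces the twisting character to be trivial by comparing Frobenius actions via property (ii) of the previous proposition (the Frobenius lifts ${\chi_E}(\pi_\beta)$ agree at almost all $\beta$, and by Chebotarev the local components determine the twist). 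Hence $E_1\cong E_2$ over $H$. For surjectivity, given $(j_0,\chi)\in J\times\Sigma$, pick any $E$ with $j(E)=j_0$ defined over $H$; its character $\chi_E$ lies in $\Sigma$ and has the same restriction to $H^\times$ as $\chi$, so $\chi/\chi_E$ is a character trivial on $H^\times$, i.e. a finite-order idele class character, which corresponds to a quadratic-type twist. Twisting $E$ by the corresponding cocycle produces $E'$ with $j(E')=j_0$ and $\chi_{E'}=\chi$, giving the preimage.

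\medskip\noindent
\textbf{Part (ii).} For the isogeny classification, I would first note that $H$-isogenous curves have equal Hecke characters, since $\chi_E$ is defined through the $\ell$-adic representation by property (iii), and isogenous abelian varieties have isomorphic rational Tate modules; thus $E\mapsto\chi_E$ descends to $ELL^\circ_H(\CO)$ and is well-defined. Conversely, if $\chi_{E_1}=\chi_{E_2}$ then by property (iii) the $\ell$-adic representations $\rho_\ell$ of $E_1$ and $E_2$ agree for every $\ell$, so Faltings' isogeny theorem (equivalently the Tate conjecture for CM abelian varieties, which is classical here) yields an $H$-isogeny $E_1\sim E_2$, giving injectivity on isogeny classes. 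Surjectivity follows from part (i): any $\chi\in\Sigma$ is realized as $\chi_E$ for some $E$, whose isogeny class maps to $\chi$.

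\medskip\noindent
The main obstacle I expect is the injectivity argument in part (i): controlling precisely how the quotient character $\chi_{E_1}/\chi_{E_2}$ corresponds to the twisting cocycle in $H^1(G_H,\CO^\times)$ and verifying that equality of Hecke characters forces a trivial twist rather than merely an unramified or finite-order ambiguity. This requires carefully matching the local behavior of the Hecke character against the Galois cohomology classifying twists, using the good-reduction criterion (ii) to pin down Frobenius eigenvalues at each prime of good reduction and then a Chebotarev density argument to conclude global triviality. By contrast part (ii) is comparatively clean once Faltings (or the classical CM analogue) is granted.
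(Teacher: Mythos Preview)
Your proposal is correct, but your route in part (ii) diverges substantially from the paper's. For injectivity of the isogeny classification you invoke Faltings' isogeny theorem: equal $\ell$-adic representations force an $H$-isogeny. This works, but the paper avoids Faltings entirely. Instead, since $E_1,E_2$ both have CM by $\CO$, there is \emph{a priori} a $\bar H$-isogeny $\phi\colon E_1\to E_2$ (their complex uniformizations have commensurable lattices). The paper then defines the cocycle $\psi(g)=\hat\phi\circ\phi^g/\deg\phi\in\CO^\times$ and proves (Lemma~\ref{lem2}) the identity $\chi_{E_2}=\psi\cdot\chi_{E_1}$. When $\chi_{E_1}=\chi_{E_2}$ this gives $\psi=1$ directly, so $\phi$ is defined over $H$. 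The gain of the paper's approach is that it is completely elementary and self-contained within CM theory; the gain of yours is conceptual uniformity (both (i) and (ii) reduce to ``equal Galois representations implies equal objects''), at the cost of a deep black box that is unnecessary here.

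For part (i), your injectivity argument is the same in spirit, but you describe controlling the twisting cocycle via Chebotarev as the ``main obstacle.'' In the paper this step is painless: Lemma~\ref{Lem1} establishes the exact formula $\chi_{E^\psi}=\psi\cdot\chi_E$ once and for all (its proof is where the density/approximation argument lives), so in the theorem itself $\chi_{E_1}=\chi_{E_2}$ gives $\psi=1$ in one line. Your surjectivity arguments for both parts are fine and in fact more explicit than the paper, which only references \cite{G} and treats injectivity.
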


\begin{lem}\label{Lem1}
For any $E\in\mathcal{ELL_H(O)}$ and
$\psi:{G_H}\rightarrow\CO^\times$ a continuous homomorphism,
let $E^\psi$ denote twist of $E$ by $\psi$ (note that
$\CO^\times=\mathrm{Aut}(E)$), then
$\chi_{E^\psi}=\psi\cdot\chi_E$.
\end{lem}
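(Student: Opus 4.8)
The plan is to reduce the asserted identity of Hecke characters to the behaviour of the $\ell$-adic Galois representation under twisting, and then to invoke part (iii) of the preceding Proposition, which expresses $\chi_E$ in terms of $\rho_\ell$. Throughout, $\psi$ is regarded as an idele class character of $H$ via the Artin reciprocity map, so that the product $\psi\cdot\chi_E$ appearing in the statement is meaningful as a continuous character $\BA^\times_H\to K^\times$.

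First I would recall the mechanism of the twist. Since $K\subseteq H$, the group $G_H$ acts trivially on $\Aut(E)=\CO^\times$, so the continuous homomorphism $\psi:G_H\to\CO^\times$ is a $1$-cocycle and determines a twist $E^\psi/H$ together with an isomorphism $\phi:E\to E^\psi$ defined over $\bar H$ satisfying the cocycle relation $\phi^\sigma=\phi\circ\psi(\sigma)$ for every $\sigma\in G_H$, where $\psi(\sigma)\in\CO^\times$ acts through the complex multiplication. Transporting this to Tate modules, the isomorphism $\phi$ identifies $T_\ell(E)$ with $T_\ell(E^\psi)$, and a direct computation with the cocycle relation — for $Q\in T_\ell(E)$ one has $\sigma(\phi(Q))=\phi^\sigma(\sigma Q)=\phi(\psi(\sigma)\rho_{\ell,E}(\sigma)Q)$ — shows that the representation attached to $E^\psi$ is
\[\rho_{\ell,E^\psi}=\psi\cdot\rho_{\ell,E},\]
with $\psi(\sigma)$ acting on the rank-one $\CO\otimes\BZ_\ell$-module $T_\ell$ via the complex multiplication. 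This is the heart of the matter: twisting by an automorphism multiplies the $\ell$-adic character by that automorphism.

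Finally I would combine this with relation (iii). Applying $\rho_\ell=\chi\cdot(\mathbf{N}^{H_\ell}_{K_\ell})^{-1}$ to both $E$ and $E^\psi$ gives
\[\chi_{E^\psi}\cdot(\mathbf{N}^{H_\ell}_{K_\ell})^{-1}=\rho_{\ell,E^\psi}=\psi\cdot\rho_{\ell,E}=\psi\cdot\chi_E\cdot(\mathbf{N}^{H_\ell}_{K_\ell})^{-1},\]
and cancelling the common local norm factor yields $\chi_{E^\psi}=\psi\cdot\chi_E$ on the finite ideles away from $\ell$. Since $\psi$ takes values in the finite group $\CO^\times$ it is a finite-order character, compatible with the common infinity type forced by $\chi|_{H^\times}=\mathbf{N}^H_K$ in part (i); letting $\ell$ vary (equivalently, using that a Hecke character is determined by its values at almost all finite places together with its restriction to $H^\times$) the equality propagates to all of $\BA^\times_H$.

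The main obstacle I anticipate is not the algebra but the bookkeeping of normalisations: one must check that the identification of $\psi$ with an idelic character through reciprocity is compatible with the convention implicit in relation (iii), so that the factors $(\mathbf{N}^{H_\ell}_{K_\ell})^{-1}$ attached to $E$ and to $E^\psi$ are literally identical and cancel. A secondary subtlety is pinning down the direction of the twist, i.e.\ whether one obtains $\psi$ or $\psi^{-1}$; this is fixed by the chosen convention for $E^\psi$ in the cocycle relation above, and once that convention is aligned with the statement the sign is forced.
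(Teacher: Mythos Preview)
Your argument is correct, but it takes a different route from the paper. You invoke part~(iii) of the preceding Proposition and compute the twist on Tate modules, obtaining $\rho_{\ell,E^\psi}=\psi\cdot\rho_{\ell,E}$ and then cancelling the common norm factor. The paper instead works directly with part~(ii): fixing a good place $w$ where everything is unramified, it reduces the $\bar H$-isomorphism $\phi:E\to E^\psi$ modulo $w$, uses that $\chi_E(\pi_w)$ and $\chi_{E^\psi}(\pi_w)$ lift the $q_w$-Frobenius on the reductions, and compares the two via the action on the invariant differential to obtain $\chi_{E^\psi}(w)=\psi(\sigma_w)\cdot\chi_E(w)$ at almost all $w$; density and the common restriction to $H^\times$ then finish. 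Your approach is a bit more conceptual and packages the normalisation issues into a single appeal to~(iii), at the cost of the bookkeeping you flag; the paper's approach is more elementary and place-by-place, sidestepping the $\ell$-adic machinery entirely in favour of the concrete Frobenius description, but requiring the invariant-differential trick to pass from an equality of endomorphisms to an equality of scalars.
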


\begin{proof}
Recall ${E^\psi}$ is the $\psi$-twist of E means there is a
$\bar{H}$-isomorphism $\phi:{E}\rightarrow {E^\psi}$ such that for
any $g\in{G_H},\ \psi(g)=\phi^{-1}\circ\phi^g$. Fix such a $\phi$.

Let $w$ be a place where $\chi_{E^\psi}$,\ $\psi\cdot\chi_E$ and
$\psi$ are all unramified. Then from
$\psi^{\sigma(w)}=\phi^{-1}\circ\phi^{\sigma(w)}$, we have
$\psi^{\sigma(w)}=\phi^{-1}\circ\phi^{q_w}\pmod w$. So that as
morphism, we have
  \[\psi^{\sigma(w)}\circ[\chi_E(w)]=\phi^{-1}\circ[\chi_{E^\psi}(w)]\circ\phi\pmod w,\]
which implies that $\psi^{\sigma(w)}\cdot\chi_E(w)=\chi_{E^\psi}(w)$
by acting on the invariant differential.

As both $\chi_{E^\psi}=\psi\cdot\chi_E$ when restrict to
$\mathrm{H^\times}$, the approximation theorem implies that they are
the same on an open dense subset of $\BA^\times_H$ and the assertion
follows.
\end{proof}

\begin{lem}\label{lem2}
Let ${E_1},\ {E_2}\in\mathcal{ELL_H(O)},\phi:{E_1}\rightarrow{E_2}$
an $\bar{H}$-isogeny. Define
  \[\psi:{G_H}\rightarrow\CO^\times,g\mapsto\frac{\hat{\phi}\circ{\phi^g}}{\deg{\phi}}\in
    {H^1}({G_H},\CO^\times).\]
Then $\chi_{E_2}=\psi\cdot\chi_{E_1}$.
\end{lem}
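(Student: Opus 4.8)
The plan is to mirror the proof of Lemma \ref{Lem1}: I would show that $\chi_{E_2}$ and $\psi\cdot\chi_{E_1}$ are two continuous homomorphisms $\BA_H^\times\to K^\times$ that agree on $H^\times$ and at the Frobenius of almost every prime, and then conclude by the approximation theorem. Before anything else I would pin down the structure of $\psi$. Since $\phi$ is an isogeny of complex tori it is $\CO$-linear, i.e. $\phi\circ[\alpha]_{E_1}=[\alpha]_{E_2}\circ\phi$ for all $\alpha\in\CO$; consequently $\ker\phi$ is an $\CO$-submodule, hence of the form $E_1[\fb]$ for an ideal $\fb\subseteq\CO$. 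Because the complex multiplication is defined over $H$, every $g\in G_H$ satisfies $[\alpha]^g=[\alpha]$, so $E_1[\fb]$ is $G_H$-stable and $\ker\phi^g=\ker\phi$. Thus $\phi^g$ and $\phi$ differ by an automorphism of $E_2$, and $\psi(g)=\hat\phi\circ\phi^g/\deg\phi=\phi^{-1}\circ\phi^g$ really does lie in $\Aut(E_2)=\CO^\times$; writing the relation as $\phi^g=[\psi(g)]_{E_2}\circ\phi$ makes the identity $\psi(gh)=\psi(g)\psi(h)$ immediate (the $G_H$-action on $\CO^\times$ being trivial), so $\psi$ is a continuous character and, via reciprocity, an idele class character trivial on $H^\times$.

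Next I would observe that both $\chi_{E_2}$ and $\psi\cdot\chi_{E_1}$ restrict to $\mathbf{N}^H_K$ on $H^\times$ (here $\chi_{E_1},\chi_{E_2}\in\Sigma$ and $\psi|_{H^\times}=1$), so it suffices to check equality on the uniformizers $\pi_w$ for almost all $w$. Fix a prime $w$ of good reduction for $E_1$, $E_2$ and $\phi$, write $q_w=\mathbf{N}(w)$, let $\sigma_w\in G_H$ be the Frobenius, and let $F_i$ denote the $q_w$-power Frobenius of the reduction $\tilde E_i$. Reducing $\phi^{\sigma_w}=[\psi(\sigma_w)]_{E_2}\circ\phi$ modulo $w$, and using that $\sigma_w$ reduces to the $q_w$-power map so that $\widetilde{\phi^{\sigma_w}}=\tilde\phi^{(q_w)}$, gives $\tilde\phi^{(q_w)}=[\psi(\sigma_w)]_{E_2}\circ\tilde\phi$, while the functoriality of Frobenius gives $\tilde\phi^{(q_w)}\circ F_1=F_2\circ\tilde\phi$. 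By property (ii) of the complex-multiplication character recalled above, $\chi_{E_i}(\pi_w)$ is the unique element of $\CO$ reducing to $F_i$; combining this with the $\CO$-linearity of $\tilde\phi$ yields $\tilde\phi\circ F_1=[\chi_{E_1}(\pi_w)]_{E_2}\circ\tilde\phi$. Stringing these together I get $F_2\circ\tilde\phi=[\psi(\sigma_w)\,\chi_{E_1}(\pi_w)]_{E_2}\circ\tilde\phi$, and since $\tilde\phi$ is a nonzero isogeny it may be cancelled on the right, so $[\psi(\sigma_w)\chi_{E_1}(\pi_w)]_{E_2}$ reduces to $F_2$. By uniqueness of the Frobenius lift this forces $\chi_{E_2}(\pi_w)=\psi(\sigma_w)\chi_{E_1}(\pi_w)$ in $\CO$.

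Finally, having matched the two homomorphisms on $H^\times$ and at almost all Frobenii, the approximation theorem (exactly as in the last step of Lemma \ref{Lem1}) shows they agree on a dense subgroup of $\BA_H^\times$, hence everywhere by continuity, giving $\chi_{E_2}=\psi\cdot\chi_{E_1}$.

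The hard part will be the local identity at $w$: one must correctly interlace three facts — the defining relation $\phi^{\sigma_w}=[\psi(\sigma_w)]_{E_2}\circ\phi$, the functoriality $\tilde\phi^{(q_w)}\circ F_1=F_2\circ\tilde\phi$, and the $\CO$-linearity of $\phi$ — with consistent normalizations for the arithmetic Frobenius, and then upgrade a congruence of reduced endomorphisms to an equality in $\CO$ through the uniqueness of the Frobenius lift rather than by acting on the invariant differential, which is uninformative here since the Frobenius is inseparable. A secondary point requiring care, already addressed above, is the verification that $\psi$ genuinely takes values in $\CO^\times$, which rests on the $\CO$-stability of $\ker\phi$ and hence on the complex multiplication being defined over $H$.
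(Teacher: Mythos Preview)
Your proof is correct and follows the same strategy as the paper's: verify that $\chi_{E_2}$ and $\psi\cdot\chi_{E_1}$ agree on $H^\times$ and at the Frobenius of almost every prime via reduction mod $w$ and the functoriality of Frobenius, then conclude by approximation. The only point of departure is the final extraction of the scalar identity at $w$: the paper composes with $\hat\phi$ and then ``acts on the invariant differential'' to read off $\deg\phi\cdot\chi_{E_2}(w)=\deg\phi\cdot\psi(w)\cdot\chi_{E_1}(w)$, whereas you cancel $\tilde\phi$ on the right and invoke the injectivity of $\CO\hookrightarrow\End(\tilde E_2)$; your version is the cleaner of the two for exactly the reason you flag (the Frobenius kills the invariant differential on the reduction, so the paper's phrasing implicitly presumes the congruence has already been lifted to characteristic zero), but the underlying argument is the same.
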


\begin{proof}As in the proof of Lemma~\ref{Lem1}, for all but finitely many $w$,
because${\hat{\phi}\circ{\phi^{\sigma(w)}}}={\deg{\phi}}\cdot\psi(w)$,
${\hat{\phi}\circ{\phi^{q_w}}}={\deg{\phi}}\cdot\psi(w) \pmod w$,
which means
${\hat{\phi}\circ{[\chi_{E_2}(w)]}\circ{\phi^{q_w}}}={\deg{\phi}}\cdot\psi(w)\circ{\chi_{E_1}(w)}
\pmod w$.

Because $\hat{\phi}\circ{\phi}=\deg{\phi}$, acting on the invariant
differential gives that
$\deg{\phi}\cdot{\chi_{E_2}(w)}=\deg{\phi}\cdot\psi(w)\cdot{\chi_{E_1}(w)}$,
then we have $\chi_{E_2}=\psi\cdot\chi_{E_1}$
\end{proof}

\begin{proof}[Proof of Theorem~\ref{thm:class}] (c.f. \cite{G})
(i) $j(E_1)=j(E_2)$ implies that there is a $\bar{H}$-isomorphism
$\phi:{E_1}\rightarrow{E_2}$, so if define
$\psi:{G_H}\rightarrow\CO^\times,g\mapsto{\phi}^{-1}\circ{\phi}^g$
as in Lemma~\ref{Lem1}, then ${E_2}={E_1}^{\psi}$, and
$\chi_{E_2}=\psi\cdot\chi_{E_1}$. But then the assumption implies
that ${\psi}=1$ , i.e. $\phi$ is defined over H, so ${E_1}={E_2}$ in
$\mathcal{ELL_H(O)}$.

(ii) For any $E_1,E_2$, choose a $\phi\in \Hom(E_1,E_2)$ which is a
$\bar{H}$-isogeny, and define $\psi$ as in Lemma~\ref{lem2}, then
$\chi_{E_2}=\psi\cdot\chi_{E_1}$. Because $\chi_{E_1}=\chi_{E_2}$,
we have $\psi=1$, which means $\phi$ is defined over $H$.
\end{proof}

Recall the definition of $\BQ$-curves:
\begin{defn} $E\in{ELL_H(O)}$ is called a $\BQ$-curve, if for any
$\sigma\in \Gal(H/\BQ)$, we have $E^{\sigma}=E$ in
$\mathcal{ELL^{\circ}_H(O)}$.
\end{defn}

We will now describe the descent method used in \cite{G}.
\begin{lem}\label{Lem3} Let $E\in\mathcal{ELL_H(O)}$ be a $\BQ$-curve.
Then for any $\sigma\in G$,
  \[\Hom(E^\sigma,E)/{2\,\Hom(E^\sigma,E)}\cong{\mathcal {O}/2\mathcal {O}}.\]
\end{lem}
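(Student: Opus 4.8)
The plan is to identify $\Hom(E^\sigma,E)$ as an invertible $\CO$-module and then invoke a general fact about the reduction of invertible modules modulo an ideal. Concretely, I would reduce the claim to two assertions: (a) $M:=\Hom(E^\sigma,E)$ is a nonzero rank-one projective (equivalently, invertible fractional-ideal) $\CO$-module, and (b) for any such $M$ one has $M/2M\cong\CO/2\CO$. Granting (a) and (b), the statement follows at once.

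For (a), the nonvanishing of $M$ is precisely the $\BQ$-curve hypothesis: since $E^\sigma=E$ in $ELL_H^\circ(\CO)$, there is an $H$-isogeny $\phi_0\colon E^\sigma\to E$, so $M\neq 0$. To see that $M$ is invertible of rank one, I would use CM theory. Both $E$ and $E^\sigma$ lie in $ELL_H(\CO)$, so over $\BC$ we may write $E=\BC/\mathfrak{a}$ and $E^\sigma=\BC/\mathfrak{b}$ for fractional $\CO$-ideals $\mathfrak{a},\mathfrak{b}$, and every homomorphism is induced by multiplication by some $\lambda\in K$ with $\lambda\mathfrak{b}\subseteq\mathfrak{a}$. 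Fixing $\phi_0$ and composing identifies $M$ with the fractional ideal $\mathfrak{a}\mathfrak{b}^{-1}=\{\lambda\in K:\lambda\mathfrak{b}\subseteq\mathfrak{a}\}$, which is invertible because $\CO$ is a Dedekind domain; in particular $M$ is a finitely generated torsion-free $\CO$-module of rank exactly one (the $\CO$-structure coming from $\End(E)=\CO$). The point requiring care is that all these isogenies are defined over $H$ rather than merely over $\BC$, so that the algebraically defined $\Hom(E^\sigma,E)$ agrees with the analytic one; this holds because for CM elliptic curves with $\End=\CO$ the endomorphisms are already defined over $H$, and one $H$-isogeny $\phi_0$ together with $\End(E)$ generates the full complex Hom lattice.

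For (b), I would argue locally. An invertible $\CO$-module $M$ is locally free of rank one, so $M_{\mathfrak{p}}\cong\CO_{\mathfrak{p}}$ for every prime $\mathfrak{p}$, whence $(M/2M)_{\mathfrak{p}}\cong(\CO/2\CO)_{\mathfrak{p}}$. The finite ring $\CO/2\CO$ decomposes by the Chinese Remainder Theorem into a product of local artinian rings indexed by the primes of $\CO$ above $2$, and a module over a product of local rings that is locally free of rank one is free of rank one. Hence $M/2M\cong\CO/2\CO$. Equivalently, one may observe that $\Pic(\CO/2\CO)=0$, so that every rank-one projective module over $\CO/2\CO$ is free.

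I expect the main obstacle to be (a): pinning down that $\Hom(E^\sigma,E)$ is invertible of rank exactly one as an $\CO$-module, and in particular verifying the field-of-definition compatibility so that the $H$-rational Hom group coincides with the full complex-analytic one. Once this structural identification is secured, the modular reduction in (b) is a routine Dedekind-domain computation.
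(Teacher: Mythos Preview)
Your argument is correct: identifying $\Hom(E^\sigma,E)$ with the fractional ideal $\fa\fb^{-1}$ and then invoking $\Pic(\CO/2\CO)=0$ is a clean structural proof, and your remarks on the field of definition suffice to match the analytic and algebraic Hom groups. The paper, however, proceeds differently. It fixes a generator $P$ of $E[2]$ as a free rank-one $\CO/2\CO$-module, notes that $P^\sigma$ then generates $E^\sigma[2]$, and defines the map $\phi\mapsto a_\phi$ by $\phi(P^\sigma)=a_\phi\cdot P$; injectivity is immediate (if $a_\phi=0$ then $\phi$ kills $E^\sigma[2]$ and hence factors through $[2]$), while surjectivity is obtained from the density theorem. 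The payoff of the paper's route is that the isomorphism is explicit and tied to the $2$-torsion: immediately after the lemma, the condition ``$\phi$ maps to $1$'' (i.e.\ $\phi(P^\sigma)=P$) is used to put $\Gal(H/\BQ)$-module structures on $E(H)/2E(H)$, $\Sel_2(E/H)$, and $\Sha(E/H)[2]$, and this concrete description is what makes the compatibility of the connecting map $\delta$ with the Galois action transparent. Your approach is more conceptual, generalizes at once with $2$ replaced by any ideal, and avoids the appeal to density, but it yields only an abstract isomorphism; to feed it into the subsequent constructions you would still want to identify it with the torsion-action map.
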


\begin{proof} Assume $E[2]$ is generated by
$P$ over ${\mathcal {O}/2 \mathcal {O}}$, so $E^\sigma[2]$ is
generated by $P^\sigma$. For any $\phi\in \Hom(E^\sigma,E)$, let
$[a_{\phi}]\in{O}/2 \mathcal {O}$ such that
$\phi(P^{\sigma})={a_{\phi}\cdot P}$, this gives a homomorphism
$\Hom(E^\sigma,E)/{2 \Hom(E^\sigma,E)}\rightarrow{\CO/2
\CO}$, which is obviously injective. On the other hand, the
density theorem implies that this homomorphism is surjective.
\end{proof}
Recall that from
  \[\xymatrix{
  0 \ar[r] & E(H)[2] \ar[r] & E(\bar{H}) \ar[r]^{2} & E(\bar{H}) \ar[r] &
  0}\]
we get the following diagram£º
  \[\xymatrix{
  0 \ar[r]& E(H)/{2 E^{(d)}(p)}\ar[r]^\delta\ar[d]& H^1(G_H,E[2])\ar[r]\ar[d]&H^1(G_H,E[2])\ar[r]\ar[d]&0 \\
  0 \ar[r]&\prod E(H_v)/{2 E^{(d)}(p)(H_v)}\ar[r]^{\delta_v}& \prod H^1(G_{H_v},E[2])\ar[r] &\prod H^1(G_{H_v},E[2])\ar[r]&0
  }\]

For any $\BQ$-curves $E$, we can give $E(H)/{2 E(H)}, \Sel_{2}(E/H)\
and\ \Sha(E/H)[2]$ a structure of $Gal(H/Q)$-module by using
Lemma~\ref{Lem3} as following

\begin{enumerate}
\item[-] For any $\sigma\in \Gal(H/K)$ and $x\in E(H)/{2 E(H)}$, define
  \[\sigma(x)=\phi(x^\sigma)\]
where $\phi\in Hom(E^\sigma,E)$ is chosen so that $\phi$ maps to $1$
under the isomorphism in Lemma~\ref{Lem3}

\item[-] For any $\sigma\in \Gal(H/K)$ and $x\in \Sel_{2}(E/H)$, define
  \[\sigma(x)=\phi(x^\sigma)\]
where $\phi\in \Hom(E^\sigma,E)$ is chosen so that $\phi$ maps to
$1$ under the isomorphism in Lemma~\ref{Lem3}

\item[-] For any $\sigma\in \Gal(H/K)$ and $x\in \Sha(E/H)[2]$, define
\[\sigma(x)=\phi(x^\sigma)\]
where $\phi\in Hom(E^\sigma,E)$ is chosen so that $\phi$ maps to $1$
under the isomorphism in Lemma~\ref{Lem3}.
\end{enumerate} It is
easy to verify the above actions are independent of the choose of
$\phi$.

\begin{prop}\label{prop1}The exact sequence
$$
\xymatrix@C=0.5cm{
  0 \ar[r] & E(H)/{2 E(H)} \ar[rr]^{\delta} && \Sel_{2}(E/H) \ar[rr]^{} && \Sha(E/H)[2] \ar[r] & 0 }
$$ is an exact sequence of $\Gal(H/Q)$ modules.
\end{prop}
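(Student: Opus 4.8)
The plan is to read off all three $\Gal(H/\BQ)$-actions from a single recipe --- first conjugate by $\sigma$, then push forward by the chosen isogeny $\phi_\sigma$ --- and then to exhibit each of these two operations as a morphism of the entire descent sequence, so that their composite is automatically compatible with $\delta$ and with the projection onto $\Sha(E/H)[2]$. Concretely, for each $\sigma\in\Gal(H/\BQ)$ I would fix an isogeny $\phi_\sigma\in\Hom(E^\sigma,E)$ representing $1$ under the isomorphism of Lemma~\ref{Lem3} (this is available for every $\sigma\in\Gal(H/\BQ)$ because $E$ is a $\BQ$-curve); such a $\phi_\sigma$ is an $H$-isogeny, it commutes with multiplication by $2$, and since it is a homomorphism it carries $E^\sigma[2]$ into $E[2]$. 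On each of $E(H)/2E(H)$, $\Sel_2(E/H)$ and $\Sha(E/H)[2]$ the action is $x\mapsto\phi_\sigma(x^\sigma)$, where $x\mapsto x^\sigma$ is Galois conjugation (of points, of cocycles valued in $E[2]$, or of classes in $H^1(G_H,E)$) followed by the identification with the corresponding object attached to $E^\sigma$.

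For the conjugation step: because $H/\BQ$ is Galois, $\sigma$ transports the exact sequence $0\to E[2]\to E\to E\to 0$ over $\bar H$ to the analogous sequence for $E^\sigma$, and by naturality of the connecting homomorphism this identifies the whole three-term descent sequence of $E$ with that of $E^\sigma$, compatibly with $\delta$ and the projection. Here I would check that the local conditions defining the Selmer and Tate--Shafarevich groups are respected: $\sigma$ merely permutes the places of $H$, so it carries $\prod_v H^1(G_{H_v},E)$ to itself and sends $\Sel_2(E/H)$ isomorphically onto $\Sel_2(E^\sigma/H)$, hence $\Sha(E/H)[2]$ onto $\Sha(E^\sigma/H)[2]$.

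For the isogeny step: since $\phi_\sigma$ is an $H$-isogeny commuting with $[2]$, it induces a map of Kummer sequences from $E^\sigma$ to $E$, and place by place a compatible map of localizations; by functoriality of descent under isogeny this is a morphism of the three-term sequence of $E^\sigma$ into that of $E$, again compatible with $\delta$ and the projection. Composing the conjugation isomorphism with this isogeny morphism realizes the $\sigma$-action as an endomorphism of the descent sequence of $E$, which is exactly the assertion that $\delta$ and the projection are $\Gal(H/\BQ)$-equivariant.

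Finally, to confirm that $x\mapsto\phi_\sigma(x^\sigma)$ is a genuine action, I would verify the composition law: one checks that $\phi_\sigma\circ\phi_\tau^\sigma$ represents $1$ under the isomorphism of Lemma~\ref{Lem3} attached to $E^{\sigma\tau}$, exactly as $\phi_{\sigma\tau}$ does (both send $P^{\sigma\tau}$ to $P$ modulo $2$), so by injectivity in Lemma~\ref{Lem3} they agree in $\Hom(E^{\sigma\tau},E)/2\Hom(E^{\sigma\tau},E)$, whence $\sigma(\tau(x))=(\sigma\tau)(x)$; independence of the choice of $\phi_\sigma$ has already been noted. I expect the main obstacle to be purely the bookkeeping in the two functoriality claims --- checking at the level of cocycles that conjugation and push-forward by $\phi_\sigma$ each commute with the connecting maps and preserve the local Selmer conditions --- rather than any input deeper than Lemma~\ref{Lem3}.
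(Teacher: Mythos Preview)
Your proposal is correct and follows essentially the same approach as the paper: decompose the $\sigma$-action as Galois conjugation followed by push-forward along the chosen $\phi_\sigma$, and verify compatibility with the Kummer connecting map. The paper carries this out as a bare-hands cocycle computation for $\delta$ alone (writing $\delta(P)(g)=Q^g-Q$ with $[2]Q=P$ and checking $\delta(\phi_\sigma(P^\sigma))(g)=\phi_\sigma(\delta(P)^\sigma(g))$ directly), whereas you package the same check as two functoriality statements and also verify the Selmer/local conditions and the composition law for the action --- points the paper leaves implicit.
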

\begin{proof}It is enough to show $\delta$ is a homomorphism of
$\Gal(H/Q)$-modules.

For any $P\in E(H)/{2 E(H)}$, assume $[2]Q=P$, then
$\delta(P)(g)=Q^g-Q$, for any $g\in G_H$. Choose $\phi\in
Hom(E^\sigma,E)$ such that $\phi\equiv1(2)$, then we have by
definition $\sigma(P)=\phi(P^{\sigma})$, so
  \[\begin{split}&[\delta(\sigma(P))](g)=[\delta(\phi(P^{\sigma}))](g)\\
                =&g(\phi(Q^{\sigma})-\phi(Q^{\sigma}))=\phi(g(Q^{\sigma})-\phi(Q^{\sigma}))\\
                =&\phi\circ{\sigma[\sigma^{-1}g\sigma(Q)-Q]}=\sigma[\delta(P)(g)],\end{split}\]
the proposition then follows.
\end{proof}

\subsection{Gross curves}
Let $p$ be a rational prime with $p>3$ and $p\equiv3\pmod4$.
Let $K=\BQ(\sqrt{-p})$, $\mathcal {O}$ the integer ring of $K$, $H=H_K$ be the
Hilbert class field of $K$. For any ideal
$a\subseteq\mathcal {O}$, let $K(a)$ be the
ray class field modulo $a$.

Consider the continuous homomorphism
$\phi_0:K^{\times}(\prod_{v}\mathcal {O}^{\times}_v)\rightarrow
K^{\times}$ ($\mathcal {O}^{\times}_{\infty}:=C^{\times}$)
satisfying

$(1)$$\phi_0|_{K^{\times}}=$id$_{K^{\times}}$

$(2)$
\[\xymatrix{
  \prod_{v}\CO^{\times}_v\ar[r]\ar[d]& \{\pm1\}\\
  \prod_{v|(\sqrt{-p})}\CO^{\times}_v\ar[r]& (\mathcal
  {O}/(\sqrt{-p}))^{\times}\ar[u]^\delta&
  }\]

Here $\delta$ maps $x=a+b\frac{1+\sqrt{-p}}{2}(a,b\in\BZ)$ to
$(\frac{x\pmod{\sqrt{-p}}}{p})=(\frac{a+\frac{b}{2}}{p})$, where
$(\frac{\cdot}{p})$ is the Jacobi symbol. Note that $p\equiv3\pmod4$
ensures this $\phi_0$ is well defined.

From
\[\xymatrix{
  0 \ar[r]& K^{\times}(\prod_{v}\mathcal
{O}^{\times}_v)\ar[r]& A_{K^{\times}}\ar[r]&Cl(K)\ar[r]&0
  }\]

we get
\[\xymatrix{
  0 \ar[r]& \Hom(CL(K),\bar{K}^{\times})\ar[r]&
\Hom(A_{K^{\times}},\bar{K}^{\times})\ar[r]&\Hom(K^{\times}(\prod_{v}\mathcal
{O}^{\times}_v),\bar{K}^{\times})\ar[r]&0
  }\]

because Ext$^1(Cl(K),\bar{K}^{\times})=0$ as $\bar{K}^{\times}$ is
divisible hence injective. From this, we have
\begin{thm}\label{construction}
There is a continuous homomorphism $\phi:\BA_{K^{\times}}\rightarrow
\bar{K}^{\times}$ such that $\phi|_{K^{\times}(\prod_{v}\mathcal
{O}^{\times}_v)}=\phi_0$; in particular this character is of
conductor $(\sqrt{-p})$. This character is unique up to
$\widehat{Cl(K)}.$
\end{thm}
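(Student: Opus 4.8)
The plan is to deduce the theorem directly from the two exact sequences already displayed, reading existence and uniqueness off the long exact $\Hom$-sequence and computing the conductor by a local calculation at $(\sqrt{-p})$. Write $U:=K^{\times}(\prod_v\CO^{\times}_v)$, an open subgroup of $\BA_{K^\times}$ of finite index $h=|Cl(K)|$, with $\BA_{K^\times}/U\cong Cl(K)$. Composing the given $\phi_0\colon U\to K^{\times}$ with $K^{\times}\hookrightarrow\bar K^{\times}$, I regard $\phi_0$ as an element of $\Hom(U,\bar K^{\times})$. For existence I invoke exactly the surjectivity recorded in the second display: since $\bar K^{\times}$ is divisible it is injective as a $\BZ$-module, so $\mathrm{Ext}^1(Cl(K),\bar K^{\times})=0$ and the restriction $\Hom(\BA_{K^\times},\bar K^{\times})\to\Hom(U,\bar K^{\times})$ is onto; any preimage of $\phi_0$ is a homomorphism $\phi$ extending $\phi_0$. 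This is precisely where one must enlarge the target from $K^{\times}$ to $\bar K^{\times}$: extending $\phi_0$ across the finite quotient $Cl(K)$ requires extracting roots of its values, which $K^{\times}$ need not admit but the divisible group $\bar K^{\times}$ always does.

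Continuity then comes for free and is not a diagram-chase: because $U$ is open of finite index, $\BA_{K^\times}=\bigsqcup_{i=1}^{h}g_iU$ is a finite union of open cosets, and on each coset $\phi(g_iu)=\phi(g_i)\phi_0(u)$ is continuous since $\phi_0$ is. For uniqueness, if $\phi$ and $\phi'$ both restrict to $\phi_0$ on $U$ then $\phi(\phi')^{-1}$ is trivial on $U$, hence factors through $\BA_{K^\times}/U\cong Cl(K)$ and defines an element of $\Hom(Cl(K),\bar K^{\times})=\widehat{Cl(K)}$; conversely twisting $\phi$ by any such character gives another extension. This identifies the fibre of the restriction map with $\widehat{Cl(K)}$, which is the ``unique up to $\widehat{Cl(K)}$'' clause.

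It remains to compute the conductor, which depends only on $\phi|_{\prod_v\CO^{\times}_v}=\phi_0|_{\prod_v\CO^{\times}_v}$, since a finite place $v$ is unramified for $\phi$ exactly when $\phi$ is trivial on $\CO^{\times}_v$. The hypothesis $p\equiv3\pmod4$ gives $\disc(K)=-p$, so $p$ ramifies as $(p)=\fp^{2}$ with $\fp=(\sqrt{-p})$ and $\CO/\fp\cong\BF_p$. By condition (2), $\phi_0$ is trivial on $\CO^{\times}_v$ for every $v\nmid\fp$, while at $\fp$ it factors through $(\CO/\fp)^{\times}\cong\BF_p^{\times}\to\{\pm1\}$, the nontrivial Legendre-symbol character. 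Factoring through the reduction mod $\fp$ makes it trivial on $1+\fp$, so the local conductor exponent at $\fp$ is at most $1$; nontriviality on $(\CO/\fp)^{\times}$ makes it at least $1$. Hence the exponent is exactly $1$ at $\fp$ and $0$ elsewhere, and the finite conductor is $\fp=(\sqrt{-p})$.

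I expect no serious obstacle: existence and uniqueness are forced by the injectivity of $\bar K^{\times}$ once the sequence $0\to U\to\BA_{K^\times}\to Cl(K)\to0$ is in place, and continuity is automatic from the openness of $U$. The only step carrying genuine (if routine) content is the conductor computation, where one must confirm that the character at $\fp$ is trivial on $1+\fp$ yet nontrivial on $(\CO/\fp)^{\times}$, giving conductor exponent exactly one.
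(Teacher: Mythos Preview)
Your proof is correct and follows exactly the route the paper intends: the paper's ``proof'' consists of the two displayed exact sequences and the remark that $\mathrm{Ext}^1(Cl(K),\bar K^{\times})=0$ by divisibility, from which the theorem is read off. You have simply written out the details the paper leaves implicit (continuity via the openness of $U$, the identification of the fibre with $\widehat{Cl(K)}$, and the local conductor computation at $(\sqrt{-p})$).
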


Let $\chi:\BA_{H}^{\times}\rightarrow {K}^{\times}$ be defined as
$\chi=\phi\circ N^{H}_K$ where $N^{H}_K$ is the norm map.

By the CM theory, there is a unique isogeny class of elliptic curves
over $H$ with CM $\mathcal {O}$ and the associated character $\chi$. We will call any elliptic curves in this isogeny class a Gross curve of level $p$.

Here are the basic properties of the Gross curves (\cite{G3}):
\begin{thm}\label{summary}
Let $E$ be a Gross curve and $F=\BQ(j(E))$, then we have

(1) $E(F)_{tor}\simeq \BZ/2\BZ$ or $0$, according to whether $(\frac{2}{p})=1$ or $-1$;

(2) The $\epsilon$-factor of $L(E/F,s)$ equals to $(\frac{2}{p})$.
\end{thm}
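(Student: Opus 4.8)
Both parts are governed by the splitting of $2$ in $K$. Since $p\equiv 3\pmod 4$, the supplementary law to quadratic reciprocity gives $\left(\frac 2p\right)=1$ precisely when $p\equiv 7\pmod 8$, which is precisely when $2$ splits in $\CO$, and $\left(\frac 2p\right)=-1$ precisely when $p\equiv 3\pmod 8$, i.e. when $2$ is inert. I will freely use that $h:=[F:\BQ]=[H:K]=h_K$ is \emph{odd} — by genus theory the discriminant $-p$ has a single prime factor, so $\Cl(K)$ has trivial $2$-part — and that $H=KF$ with $\Gal(H/F)=\langle c\rangle$, where $c$ is the restriction of complex conjugation and satisfies $c\circ[\alpha]=[\bar\alpha]\circ c$ on the CM action.

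For (1) the plan is to read $E(F)_{tor}$ off the $\CO$-module $E[2]\cong\CO/2\CO$, on which $G_H$ acts $\CO$-linearly (the CM being defined over $H$) through a character $\bar\rho_2\colon G_H\to(\CO/2\CO)^\times$. If $2$ splits then $(\CO/2\CO)^\times=(\BF_2\times\BF_2)^\times$ is trivial, so $G_H$ acts trivially and $E[2]\subseteq E(H)$; writing $2=\fp_2\bar\fp_2$ and letting $Q_1\in E[\fp_2]$, $Q_2\in E[\bar\fp_2]$ be the nonzero points, $c$ interchanges $Q_1$ and $Q_2$ and hence fixes $Q_3:=Q_1+Q_2$, so $E(F)[2]=\{0,Q_3\}\cong\BZ/2\BZ$. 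If $2$ is inert then $(\CO/2\CO)^\times=\BF_4^\times\cong\BZ/3\BZ$, and I would show $\bar\rho_2$ is nontrivial — equivalently that the $2$-division field strictly contains $H$ — by producing a prime $\lambda$ of good reduction with $4\nmid\#\widetilde E(\BF_\lambda)$; then $G_H$ permutes the three nonzero points cyclically and $E(F)[2]=0$. Since $\chi$ has conductor $(\sqrt{-p})$ prime to $2$, $E$ has good reduction at the primes above $2$, and reducing prime-to-$2$ torsion there, together with the CM restriction on the $\ell$-division fields for odd $\ell$, lets me exclude all higher torsion, leaving $E(F)_{tor}\cong\BZ/2\BZ$ or $0$.

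For (2) the plan is to identify $L(E/F,s)$ with a Hecke $L$-function and compute its root number locally. As $E$ acquires CM only over $H=KF$, the $G_F$-representation $V_\ell(E)$ is $\Ind_{G_H}^{G_F}\chi$, so $L(E/F,s)=L(s,\chi)$ by Artin formalism and the $\epsilon$-factor is the global root number $W=\prod_v W_v$. The ideal $(\sqrt{-p})$ is principal, hence splits completely in the Hilbert class field $H$ into $h$ degree-one primes $\fP_1,\dots,\fP_h$, each with completion $K_{(\sqrt{-p})}$; these are the only finite places where $\chi$ ramifies, and the remaining nontrivial local root numbers come from the $h$ complex archimedean places. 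At each $\fP_i$ the local character is $\delta=\left(\frac{\cdot}{p}\right)$ on $\BF_p^\times$, and its local root number is a normalized Gauss sum: the different of $K_{(\sqrt{-p})}/\BQ_p$ is $(\sqrt{-p})$, so the additive character pairs a residue $u$ with $u/(-p)$, and $\RTr_{K_{(\sqrt{-p})}/\BQ_p}\!\left(u/(-p)\right)=2u/(-p)$; the forced substitution $v=2u$ in the Gauss sum extracts $\left(\frac{1/2}{p}\right)=\left(\frac 2p\right)$, while the remaining quadratic Gauss sum over $\BF_p$ has phase $i$ because $p\equiv 3\pmod 4$. Thus $W_{\fP_i}=\left(\frac 2p\right)i$, each complex place contributes $W_\infty=-i$ (infinity type $(1,0)$), and multiplying gives $W=\left(\left(\tfrac 2p\right)i\right)^h(-i)^h=\left(\tfrac 2p\right)^h=\left(\tfrac 2p\right)$ since $h$ is odd.

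The main obstacle in both parts is the local analysis at $\sqrt{-p}$. For (2) this is the exact evaluation of the local $\epsilon$-factor: one must fix the global additive character, compute the different, and verify that the half-integer $\tfrac{1+\sqrt{-p}}2$ entering $\delta$ produces exactly the factor $\left(\frac 2p\right)$ and the phase $i$ — and that this phase cancels against, rather than reinforces, the archimedean $-i$, which is where the sign genuinely resides; the case $p=7$, where $F=\BQ$ and the resulting curve of conductor $49$ has root number $+1=\left(\frac 27\right)$, is a reassuring check. For (1) the corresponding subtle point is proving that $\bar\rho_2$ is genuinely nontrivial in the inert case and that no $4$-torsion or odd torsion survives; both are explicit Frobenius and reduction computations rather than new structural input.
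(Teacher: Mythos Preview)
The paper does not give its own proof of this theorem; it simply quotes the result from Gross's monograph \cite{G3}. So there is nothing in the paper to compare your argument against directly.

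That said, your outline is essentially the argument Gross carries out in \cite{G3}. For (2), Gross computes the global root number of $L(s,\chi)$ as a product of local factors exactly as you describe: the only ramified finite places are the $h$ primes of $H$ above $(\sqrt{-p})$, at each of which the local $\epsilon$-factor is a normalized quadratic Gauss sum producing $\left(\frac{2}{p}\right)i$, and the $h$ complex places each contribute $-i$; the phases cancel and $h$ odd leaves $\left(\frac{2}{p}\right)$. Your sanity check at $p=7$ is also the one Gross uses. For (1), your treatment of the split case is correct and matches Gross. In the inert case, the point you flag as the main obstacle --- that $\bar\rho_2$ is genuinely nontrivial --- Gross handles not by hunting for an individual Frobenius but by identifying the $2$-division field $H(E[2])$ with an explicit ray class field over $K$, which visibly strictly contains $H$ when $2$ is inert; this is cleaner than the ad hoc approach you propose. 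The exclusion of higher torsion is likewise done uniformly via class field theory and the known conductor $(\sqrt{-p})$ of $\chi$, rather than place-by-place reduction.

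In summary: your strategy is correct and coincides with the cited source; the only difference is that Gross's execution of the ``subtle points'' you isolate is more systematic (via ray class fields) than the Frobenius-chasing you sketch.
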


\subsection{Computation of the 2-Selmer group}

In this subsection, we assume $p\equiv7\pmod8$. We will use the method in section2.1 to compute the rank of some quadratic twists of the Gross curve. Note that in $K$, we have $(2)=\omega\bar{\omega}$ with
$\omega=(\frac{1+\pi}{2},2)$ and $\mathcal{O}^\times=\{\pm1\}$.

In \cite{G}, Gross established the following results.

\begin{prop}\label{prop2}
Notations as above, then for any Gross curve $E$, we have
  \[E^{(d)}(H)/2E^{(d)}(H)\cong\mathcal{O}/2\mathcal{O}\bigoplus(\mathcal{O}/2\mathcal{O}[\Gal(H/K)])^{n(d)}\]
with some integer $n(d)$ (so that $n(d)\cdot
h_K=rank_{\mathcal{O}}E^{(d)}(H)$).

In particular, we have
$n(d)+1\leq
rank_{\mathcal{O}/2\mathcal{O}}(\Sel_2(E^{(d)}/H))^{\Gal(H/K)}$.
\end{prop}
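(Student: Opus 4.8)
The plan is to deduce the inequality directly from the $\Gal(H/K)$-module structure displayed in the first part of the proposition, together with the fundamental exact sequence of $\Gal(H/\BQ)$-modules established in Proposition~\ref{prop1}. First I would set $G=\Gal(H/K)$ and $R=\mathcal{O}/2\mathcal{O}$, and recall that the first part gives an $R[G]$-module isomorphism
\[
E^{(d)}(H)/2E^{(d)}(H)\cong R\oplus(R[G])^{n(d)},
\]
where the factor $R$ carries the trivial $G$-action and $R[G]$ is the regular representation. The point is that taking $G$-invariants is left exact, so from the short exact sequence
\[
0\longrightarrow E^{(d)}(H)/2E^{(d)}(H)\xrightarrow{\ \delta\ }\Sel_2(E^{(d)}/H)\longrightarrow\Sha(E^{(d)}/H)[2]\longrightarrow0
\]
of $G$-modules (a fortiori of $\Gal(H/\BQ)$-modules, by Proposition~\ref{prop1}), one obtains an injection on invariants
\[
\bigl(E^{(d)}(H)/2E^{(d)}(H)\bigr)^{G}\hookrightarrow\bigl(\Sel_2(E^{(d)}/H)\bigr)^{G}.
\]
Hence it suffices to compute the $R$-rank of the left-hand side and show it equals $n(d)+1$.

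The second step is the invariant computation. Since taking invariants commutes with finite direct sums, I would compute $\bigl(R\oplus(R[G])^{n(d)}\bigr)^{G}=R^{G}\oplus\bigl((R[G])^{G}\bigr)^{n(d)}$. The trivial summand contributes $R^{G}=R$, of $R$-rank one. For the regular representation, $(R[G])^{G}$ is the $R$-span of the norm element $\sum_{g\in G}g$, so it is free of $R$-rank one; therefore the $n(d)$ copies contribute $R$-rank $n(d)$. Adding these gives
\[
\rank_{R}\bigl(E^{(d)}(H)/2E^{(d)}(H)\bigr)^{G}=n(d)+1,
\]
and combining with the injection above yields $n(d)+1\le\rank_{R}\bigl(\Sel_2(E^{(d)}/H)\bigr)^{G}$, which is exactly the claim.

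The one genuinely delicate point, and the step I would expect to be the main obstacle, is the claim that $(R[G])^{G}$ has $R$-rank exactly one rather than possibly collapsing. Because $R=\mathcal{O}/2\mathcal{O}$ has characteristic two and $G=\Gal(H/K)$ has order $h_K$, the norm element $N=\sum_{g\in G}g$ can behave subtly: if $2\mid h_K$ one must check that $N$ is still nonzero in $R[G]$ and that the invariants are not larger than the image of $N$. In fact for the group algebra $R[G]$ one always has $(R[G])^{G}=R\cdot N$ as a free rank-one $R$-module regardless of the characteristic, since an element $\sum a_g g$ is $G$-invariant iff all coefficients $a_g$ are equal; I would verify this coefficient comparison carefully, as it is where the regular-representation structure from the first part of the proposition does the essential work. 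The remaining ingredients—left exactness of invariants and the compatibility of the Kummer map $\delta$ with the $G$-action, already furnished by Proposition~\ref{prop1}—are routine.
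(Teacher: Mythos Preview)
The paper does not supply its own proof of this proposition: it is stated immediately after the sentence ``In \cite{G}, Gross established the following results'' and is simply attributed to Gross. So there is nothing in the paper to compare against line by line.

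That said, your derivation of the ``In particular'' inequality from the displayed module structure is correct and is exactly the natural argument. The exact sequence of Proposition~\ref{prop1} is $\Gal(H/\BQ)$-equivariant, hence $G$-equivariant for $G=\Gal(H/K)$; taking $G$-invariants is left exact; and your computation $(R[G])^{G}=R\cdot\bigl(\sum_{g\in G}g\bigr)$ via the coefficient comparison is valid over any coefficient ring. One remark: in the paper's setting $p\equiv 3\pmod 4$, so by genus theory $h_K=\lvert G\rvert$ is odd (this is used repeatedly, e.g.\ in the proofs of Lemma~\ref{Lem6}(ii) and Lemma~\ref{Lem10}), and hence your concern about the case $2\mid h_K$ never actually arises here---though, as you observe, the invariant computation goes through regardless.
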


\begin{lem}\label{Lem9}
For any two Gross curves $E_1,E_2$, we have $\Sel_2(E_1/H)\cong \Sel_2(E_2/H)$
as $\Gal(H/K)$-modules.
\end{lem}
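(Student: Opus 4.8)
The plan is to produce the isomorphism geometrically, from an odd-degree $H$-isogeny between $E_1$ and $E_2$, and then to check that it intertwines the two twisted $\Gal(H/K)$-actions coming from Lemma~\ref{Lem3}.

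First I would use Theorem~\ref{thm:class}(ii): since $E_1$ and $E_2$ are both Gross curves they share the same character $\chi$, hence lie in the same $H$-isogeny class. I would then arrange the connecting $H$-isogeny to have odd degree. In the CM description an $H$-isogeny $E_1\to E_2$ is given by an ideal $\mathfrak{a}$ of $\mathcal{O}$ in a prescribed class, and its degree is $\mathbf{N}(\mathfrak{a})$; every ideal class contains a prime of odd norm, so an odd-degree $H$-isogeny $\psi:E_1\to E_2$ exists. Because $\deg\psi$ is odd, $\psi$ induces an isomorphism $E_1[2]\xrightarrow{\sim}E_2[2]$ of $G_H$-modules, hence an isomorphism $H^1(G_H,E_1[2])\cong H^1(G_H,E_2[2])$; comparing the two Kummer sequences together with their local conditions (which $\psi$ matches, again since $\psi$ induces isomorphisms $E_1(H_v)/2\to E_2(H_v)/2$ for odd-degree $\psi$) shows that $\psi_*$ carries $\Sel_2(E_1/H)$ isomorphically onto $\Sel_2(E_2/H)$.

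It remains to verify that $\psi_*$ is $\Gal(H/K)$-equivariant for the actions defined via Lemma~\ref{Lem3}. Writing $\phi^{E}_\sigma\in\Hom(E^\sigma,E)$ for the normalized twisting map (the one reducing to $1$ under the isomorphism of Lemma~\ref{Lem3}), the action is $\sigma\cdot x=(\phi^E_\sigma)_*(x^\sigma)$, so equivariance of $\psi_*$ reduces to the congruence
\[ \psi\circ\phi^{E_1}_\sigma\equiv\phi^{E_2}_\sigma\circ\psi^\sigma \pmod{2\,\Hom(E_1^\sigma,E_2)}. \]
Here I would extend the identification of Lemma~\ref{Lem3} to $\Hom(E_1^\sigma,E_2)/2\Hom(E_1^\sigma,E_2)\cong\mathcal{O}/2\mathcal{O}$ and observe that, with compatible generators of the $2$-torsion, this identification is multiplicative under composition of isogenies. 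Since $\phi^{E_1}_\sigma$ and $\phi^{E_2}_\sigma$ both reduce to $1$, and since $\psi$ and $\psi^\sigma$ reduce to the same class $[a_\psi]\in\mathcal{O}/2\mathcal{O}$ (because $\sigma\in\Gal(H/K)$ fixes $\mathcal{O}$, so applying $\sigma$ to $\psi(P)=a_\psi P'$ gives $\psi^\sigma(P^\sigma)=a_\psi (P')^\sigma$), both sides of the displayed congruence reduce to $[a_\psi]$, which is a unit as $\deg\psi$ is odd. The equivariance then follows from the chain $\psi_*(\sigma\cdot x)=(\psi\circ\phi^{E_1}_\sigma)_*(x^\sigma)=(\phi^{E_2}_\sigma\circ\psi^\sigma)_*(x^\sigma)=(\phi^{E_2}_\sigma)_*((\psi_* x)^\sigma)=\sigma\cdot(\psi_* x)$, using naturality of $\sigma$-conjugation in the third equality.

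The routine parts are the existence of the odd-degree isogeny and the fact that odd-degree isogenies induce isomorphisms on $2$-Selmer groups. The main obstacle is the bookkeeping in the third paragraph: one must fix generators $P$ of $E_i[2]$ and $P^\sigma$ of $E_i^\sigma[2]$ consistently so that the Lemma~\ref{Lem3} identifications for the three relevant $\Hom$-groups are simultaneously multiplicative, and then confirm that $\psi^\sigma$ reduces to the $\sigma$-fixed class $[a_\psi]$. This is precisely where the $\BQ$-curve hypothesis, through Lemma~\ref{Lem3}, is used.
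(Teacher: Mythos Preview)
Your proposal is correct and follows the same overall strategy as the paper: produce an odd-degree $H$-isogeny $E_1\to E_2$, use it to identify the $2$-Selmer groups, and then verify $\Gal(H/K)$-equivariance. The only real differences are in the details.

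For the existence of the odd-degree $H$-isogeny, the paper first uses the density theorem to obtain an odd-degree $\bar H$-isogeny and then invokes Lemma~\ref{lem2} (equality of the characters $\chi_{E_1}=\chi_{E_2}$) to conclude it is defined over $H$; your argument via ideal classes containing primes of odd norm is an equally valid alternative. For the equivariance, the paper takes a shortcut you do not: it quotes from Proposition~\ref{prop2} that the Lemma~\ref{Lem3} action of $\Gal(H/K)$ on $E_i[2]$ is \emph{trivial}, so the induced action on $\Hom(G_H,E_i[2])$ is simply conjugation in the argument, and $\phi$ obviously commutes with that. Your direct verification of $\psi\circ\phi^{E_1}_\sigma\equiv\phi^{E_2}_\sigma\circ\psi^\sigma\pmod 2$ via the Lemma~\ref{Lem3} identifications is correct and amounts to re-proving this triviality in the course of the argument; it is a bit more bookkeeping but avoids the forward reference to Proposition~\ref{prop2}.
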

\begin{proof}As $E_i[2]\subseteq E_i(H)$, we have
$H^1(G_H,E_i[2])\cong\Hom(G_H,E_i[2])$.

The density theorem implies there is an $\bar{H}$-isogeny
$\phi:E_1\rightarrow E_2$ such that $deg(\phi)$ is odd. But $E_1$
and $E_2$ are $H$-isogenous, so by Lemma 2, we have $\phi$ is an
$H$-isogeny.

This $\phi$ induces a group isomorphism (also denoted by $\phi$)
$\phi:\Hom(G_H,E_1[2])\rightarrow \Hom(G_H,E_2[2])$, sending
$\psi\in \Hom(G_H,E_1[2])$ to $\phi\circ\psi$. We know from
Proposition~\ref{prop2} that $E_i[2]$ are trivial
$\Gal(H/K)$-modules. So for any $\psi\in \Hom(G_H,E_1[2])$, $g\in
G_H$ and $\sigma\in \Gal(H/K)$, we have
$(\phi\circ\psi)^{\sigma}(g)={\sigma}(\phi\circ\psi({\sigma}^{-1}g{\sigma}))
=\phi\circ\psi({\sigma}^{-1}g{\sigma})=\phi(\sigma(\psi({\sigma}^{-1}g{\sigma})))
=\phi(\psi^{\sigma}(g))=\phi\circ\psi^{\sigma}(g)$, i.e. $\psi$ is a
homomorphism of $\Gal(H/K)$-modules. And this gives the desired
homomorphism between $\Sel_2(E_1/H)$ and $\Sel_2(E_2/H)$.
\end{proof}

In the following, we write $S^{(d)}$ for
$(\Sel_2(E^{(d)}/H))^{\Gal(H/K)}$ (for any $E\in[C]$) .
By Lemma~\ref{Lem9}, we only need to compute $S^{(d)}$ for any fixed $E\in[C]$. But we have the
following
\begin{lem}\label{Lem4}
There is a unique Gross curve $E(p)$ such that $\Delta(E(p)/F)=(-p^3)$, where $F=\BQ(j(E))$.
\end{lem}
\begin{proof} C.f. \cite{G}, Theorem 12.2.1.
\end{proof}

We will do the computation for this $E(p)$.

Recall that from
$$\xymatrix@C=0.5cm{
  0 \ar[r] & E^{(d)}(p)(H)[2] \ar[rr]^{} && E^{(d)}(p)(\bar{H}) \ar[rr]^{2} && E^{(d)}(p)(\bar{H}) \ar[r] & 0
  }
$$
we get the following diagram£º
\[\xymatrix{
  0 \ar[r]&\frac{E^{(d)}(p)(H)}{2 E^{(d)}(p)}         \ar[r]^\delta\ar[d]&H^1(G_H,E^{(d)}(p)[2])\ar[r]\ar[d]    &H^1(G_H,E^{(d)}(p))[2] \ar[r] &0 \\
  0 \ar[r]&\prod\frac{E^{(d)}(p)(H_v)}{2 E^{(d)}(p)(H_v)}\ar[r]^{\delta_v}&\prod H^1(G_{H_v},E^{(d)}(p)[2])\ar[r]&\prod H^1(G_{H_v},E^{(d)}(p))[2] \ar[r] &0
}\]

\begin{lem}\label{Lem5}
There is a basis of $E^{(d)}(p)[2]$, such that
  \[im(\delta_v)=\{(x,y)\in H^{\times}_v/{H^{\times}_v}^2:x\in1+{\omega}^2\mathcal{O}^{\times}_v,y\in1+{\omega}\mathcal{O}^{\times}_v\}\]
for all place $v$ of $H$ over $\omega$, and
  \[im(\delta_w)=\{(x,y)\in H^{\times}_w/{H^{\times}_w}^2:x\in1+\bar{\omega}\mathcal{O}^{\times}_w,y\in1+\bar{\omega}^2\mathcal{O}^{\times}_w\}\]
for all place $w$ of $H$ over $\bar{\omega}$.
\end{lem}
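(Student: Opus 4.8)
The plan is to identify $\mathrm{im}(\delta_v)$ with the image of a local descent map and then to pin it down by an order count. Write $E=E^{(d)}(p)$. Since $p\equiv7\pmod8$ the prime $2$ splits in $K$ as $\omega\bar\omega$, so $E$ has good \emph{ordinary} reduction at every place above $2$, and by Proposition~\ref{prop2} the full $2$-torsion is rational over $H$, hence over every completion $H_v$ with $v\mid 2$. The CM action gives the canonical decomposition $E[2]=E[\omega]\oplus E[\bar\omega]$ into the kernels of the two complementary isogenies of degree $2$; I take the asserted basis of $E[2]$ to be $\{P,P'\}$ with $P$ a generator of $E[\omega]$ and $P'$ a generator of $E[\bar\omega]$. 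With respect to it, Kummer theory (the $2$-torsion being rational) identifies $H^1(G_{H_v},E[2])\cong\bigl(H_v^\times/(H_v^\times)^2\bigr)^{2}$, the first factor recording the $E[\omega]$-component and the second the $E[\bar\omega]$-component, and under this identification $\mathrm{im}(\delta_v)$ is exactly the image of the local descent map $E(H_v)/2E(H_v)\hookrightarrow\bigl(H_v^\times/(H_v^\times)^2\bigr)^{2}$.

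Next I would fix a Weierstrass model adapted to the good reduction at $v$, with nontrivial $2$-torsion points $(e_1,0),(e_2,0),(e_3,0)$ where $e_1,e_2,e_3\in H$ are the $x$-coordinates of $P,P',P+P'$, and use the classical formula $\delta_v(x_0,y_0)=\bigl(x_0-e_1,\;x_0-e_2\bigr)\bmod(H_v^\times)^2$, the third coordinate being redundant. The computation then reduces to deciding which square classes $x_0-e_1$ and $x_0-e_2$ can represent as $(x_0,y_0)$ ranges over $E(H_v)$. Here the splitting $2=\omega\bar\omega$ and ordinariness enter decisively: at $v\mid\omega$ the subgroup $E[\omega]$ lies in the kernel of reduction (the formal group), whereas $E[\bar\omega]$ maps isomorphically onto $\tilde E(k_v)[2]$. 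This asymmetry forces the two coordinates into different levels of the unit filtration $U^{(i)}=1+\omega^i\CO_v$: the $\omega$-coordinate, controlled by the formal group, lands in the deeper piece $1+\omega^2\CO_v$, while the $\bar\omega$-coordinate lands only in $1+\omega\CO_v$. At $w\mid\bar\omega$ the roles of $\omega$ and $\bar\omega$ are interchanged, which accounts for the symmetric second formula.

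Finally I close by an order count, so that only the containment $\mathrm{im}(\delta_v)\subseteq$ (the asserted subgroup) is needed and equality follows automatically. Since $H_v/\BQ_2$ is unramified with $v(2)=1$, squaring sends $U^{(i)}$ onto $U^{(i+1)}$ for $i\ge2$; hence $U^{(3)}\subseteq(H_v^\times)^2$, and a short filtration computation yields $|\mathrm{im}(U^{(2)})|=2$ and $|\mathrm{im}(U^{(1)})|=2^{\,f_v+1}$ inside $H_v^\times/(H_v^\times)^2$, where $f_v$ is the residue degree of $v$, so the asserted set has order $2^{\,f_v+2}$. This matches the local formula $|E(H_v)/2E(H_v)|=|E(H_v)[2]|\cdot|\CO_v/2\CO_v|=4\cdot2^{\,f_v}$ for the order of $\mathrm{im}(\delta_v)$. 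The main obstacle is the square-class bookkeeping of the second step: one must choose a model that is integral and explicit at the dyadic places—where no short Weierstrass form is available—and track the precise $\omega$-adic valuation and unit part of each $x_0-e_i$ through the filtration $\{1+\omega^i\CO_v\}$. The structural fact that the $\omega$-component is governed by the formal group and the $\bar\omega$-component by the étale quotient is what organizes this computation, while the order count guarantees that the resulting subgroup is the whole image.
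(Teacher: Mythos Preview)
Your strategy is structurally the same as the paper's: both hinge on good ordinary reduction at the dyadic places and on the resulting connected/\'etale splitting of $E[2]$ to single out the correct basis, after which the local image is determined. The difference is in execution. The paper first \emph{proves} ordinariness by an explicit computation of the trace of Frobenius $a_v$ using the Hecke character (writing the relevant algebraic integer $f_\omega=x+y\frac{1+\pi}{2}$ and showing $y$ is odd), and then simply cites Brumer--Kramer \cite{BK}, Lemma~3.5 and Proposition~3.6, which give directly that the image of the local Kummer map with respect to the basis $\{P_1,P_2\}$ (where $P_1$ generates the kernel of reduction at $\omega$ and $P_2=\overline{P_1}$) is exactly the filtration subgroup you wrote down. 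In place of that citation you sketch a direct argument via the Weierstrass $x$-coordinates and a filtration/order count.

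Your order count is correct and is a nice way to reduce the problem to the single containment $\mathrm{im}(\delta_v)\subseteq U^{(2)}\times U^{(1)}$; what remains genuinely unfinished in your write-up is exactly that containment (you flag it yourself as ``the main obstacle''), and also the ordinariness, which you assert but do not justify. Both are nontrivial: the containment really does require tracking $x_0-e_i$ through the dyadic filtration for an integral model, which is precisely the content of \cite{BK}, Proposition~3.6, and ordinariness is not automatic from the CM splitting of $2$ alone. So your route is sound and arguably more self-contained than the paper's, but to complete it you either need to carry out the Brumer--Kramer computation in full or cite it, and you should supply the short trace-of-Frobenius argument for ordinariness as the paper does.
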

\begin{proof}
First, we show $E(p)$ has ordinary reduction at every place $v$ of $H$ over $\omega$ and the same for places over $\bar{\omega}$.
Suppose $N(\wp_v)=(\wp_\omega)^{f(v/\omega)}=(f_\omega)$, then $\chi_p(\pi_v)=\pm f_\omega$. We need to show that $a_v=\pm(f_\omega+\bar{f_\omega})$ is odd. Assume $f_\omega=x+y\cdot\frac{1+\pi}{2}$ with $a,b\in\BZ$, then $a_v=\pm(y+2a)$. As $Nf_\omega=f_\omega\cdot \bar{f_\omega}=(N\wp_{\omega})^{f(v/\omega)}=2^{f(v/\omega)}$, we have $x^2+xy+y^2\cdot\frac{1+p}{4}=2^{f(v/\omega)}$ which is even. If $2\mid y$, then $2\nmid x$ because $2\nmid f_\omega$, then $x^2+xy+y^2\cdot\frac{1+p}{4}$ is odd which is a contradiction, so $2\nmid y$ and $a_v$ is odd. Now it follows that $E(p)^{(d)}$ has good ordinary reduction because its character differs from $\chi_p$ by a quadratic character unramified over $2$.

From \cite{BK}, Lemma 3.5, there is a unique two torsion point $P_1$ such that $P_1\equiv\mathcal{O}\pmod\omega$. Because $\Delta(E(p)/F)=(-p^3)$ is odd, $P_1$ can not belongs to $E(p)(F)$ for otherwise $P_1$ will be 2-integral which contradicts to $P_1\equiv\mathcal{O}\pmod\omega$. Let $P_2=\bar{P_1}$, then $P_2$ is the unique two torsion such that $P_2\equiv\mathcal{O}\pmod{\bar{\omega}}$. The assertion follows from \cite{BK}, Proposition 3.6 by taking $P_1,P_2$ as the basis.
\end{proof}

To state our results, we introduce the following notations.

Let $d=\prod^n_{i=1}(-q_i)\cdot\prod^m_{j=1}(q'_j)\cdot\prod^l_{k=1}(Q^*_k)$ be an integer congruent to $1$ modulo $4$, where $q_i,q'_j$ are primes split in $K$ and $Q_k$ are primes inertia in $K$ with $q_i\equiv3\pmod4$ and $q'_j\equiv1\pmod4$.

Let $h=h_K$, then $q^h_i=f_i\cdot\bar{f_i}$ and ${q_j'}^h=g_j\cdot\bar{g_j}$ with $f_i,g_j\in\CO_K$.

\begin{lem}\label{Lem10} Notation as above, we may assume $f_i=a_i+b_i\frac{1+\pi}{2}$ with $a_i\equiv1\pmod4$ and $v_2(b_i)=1$; $g_j=a'_j+b'_j\frac{1+\pi}{2}$ with $a'_j\equiv1\pmod4$ and $v_2(b'_j)\geq2$.
\end{lem}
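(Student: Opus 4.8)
The plan is to reduce everything to the arithmetic of the norm form together with a sign normalization. Since $p>3$, the only units of $\CO=\CO_K$ are $\pm1$, so a generator $f_i$ of the principal ideal $\fq_i^{h}$ (where $\fq_i$ is a prime of $\CO$ above $q_i$) is determined up to sign, and likewise $g_j$ is a generator of $(\fq'_j)^{h}$ up to sign, where $\fq'_j$ lies above $q'_j$. Writing $f_i=a_i+b_i\frac{1+\pi}{2}$, replacing $f_i$ by $-f_i$ sends $(a_i,b_i)\mapsto(-a_i,-b_i)$, which leaves $v_2(b_i)$ unchanged but, once $a_i$ is odd, toggles $a_i\bmod4$ between $1$ and $3$. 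Thus the condition $a_i\equiv1\pmod4$ (and similarly $a'_j\equiv1\pmod4$) costs nothing: it is merely a choice of sign. The genuine content of the lemma is the computation of $v_2(b_i)$ and $v_2(b'_j)$, which I claim is forced.

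First I would record the mod-$4$ behaviour of the norms. Because $\disc(K)=-p$ has a single prime divisor, genus theory gives that the $2$-rank of $Cl(K)$ is zero, so $h=h_K$ is odd. Hence $q_i^{h}\equiv q_i\equiv3\pmod4$ and $(q'_j)^{h}\equiv q'_j\equiv1\pmod4$. Next I would compute the norm form explicitly: for $a,b\in\BZ$,
\[N\Bigl(a+b\tfrac{1+\pi}{2}\Bigr)=a^2+ab+b^2\cdot\tfrac{1+p}{4},\]
using $\pi^2=-p$. Since $p\equiv7\pmod8$ we have $1+p\equiv0\pmod8$, so $\frac{1+p}{4}$ is even; this is the structural input that makes the two-adic analysis clean.

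The heart of the argument is then a reduction modulo $8$. Reducing the norm form modulo $2$ and using that $N(f_i)=q_i^{h}$ is odd forces $a_i$ odd and $b_i$ even; write $b_i=2c_i$. Substituting gives $N(f_i)=a_i^2+2a_ic_i+c_i^2(1+p)$, and since $a_i^2\equiv1\pmod8$ and $(1+p)\equiv0\pmod8$ this collapses to $N(f_i)\equiv1+2a_ic_i\pmod8$. Matching against $q_i^{h}\equiv3\pmod4$ forces $2a_ic_i\equiv2\pmod4$, i.e.\ $c_i$ odd, whence $v_2(b_i)=1$. The identical computation for $g_j$, now matched against $(q'_j)^{h}\equiv1\pmod4$, forces $2a'_jc'_j\equiv0\pmod4$, i.e.\ $c'_j$ even, whence $v_2(b'_j)\geq2$. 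Combining with the sign normalization of the first paragraph completes the proof.

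The computation is entirely elementary once the setup is in place, so I expect no serious obstacle; the only external ingredient is the oddness of $h_K$, which I would either cite from genus theory or deduce from the fact that $-p$ is a prime discriminant. The one point to keep straight is that the four candidate generators $\pm f_i,\pm\bar f_i$ all share the same value of $v_2(b_i)$: conjugation sends $(a,b)\mapsto(a+b,-b)$ and preserves $v_2(b)$ since $b$ is even. Hence the conclusion $v_2(b_i)=1$ is genuinely intrinsic to $\fq_i$ and independent of the choice of generator or embedding, and only the $a_i\bmod4$ normalization uses the freedom of sign.
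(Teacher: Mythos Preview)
Your proof is correct. The setup (sign normalization via the unit group $\{\pm1\}$, oddness of $h_K$ from genus theory, and the explicit norm form $a^2+ab+b^2\frac{1+p}{4}$) is identical to the paper's, and your observation that $a_i$ odd, $b_i$ even is forced by the norm being odd matches the paper exactly.

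Where you diverge is in the $2$-adic computation of $v_2(b_i)$. You substitute $b_i=2c_i$ and reduce the norm form modulo $8$, using $p\equiv7\pmod8$ to kill the $c_i^2(1+p)$ term and obtain $q_i^h\equiv 1+2a_ic_i\pmod8$; the parity of $c_i$ then falls out of $q_i^h\bmod4$. The paper instead argues via the splitting $(2)=\omega\bar\omega$: after normalizing $a_i\equiv1\pmod4$ it checks $f_i-1\in2\omega$ and $\bar f_i-1\in2\bar\omega$, hence $(f_i-1)(\bar f_i-1)\equiv0\pmod4$, and combines this with $f_i\bar f_i-1=q_i^h-1\equiv2\pmod4$ to read off $(f_i-1)+(\bar f_i-1)=b_i+2(a_i-1)\equiv2\pmod4$. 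Your route is a bit more self-contained (it does not invoke the primes above $2$ and does not require the sign normalization to be done first), while the paper's route foreshadows the $\omega,\bar\omega$ congruences used immediately afterward in Lemma~\ref{Lem5} and Lemma~\ref{Lem6}. Both are equally elementary.
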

\begin{proof}
Write $f_i=a_i+b_i\frac{1+\pi}{2}$, then $a^2_i+a_ib_i+b^2_i\frac{1+p}{4}=q^h_i$ is odd, so it is easy to see that $2\nmid a_i$ but $2\mid b_i$, and hence we can multiply it by $\pm1$ so that $a_i\equiv1\pmod4$.

Now we have $f_i-1\equiv0\pmod{2\omega}$ and $\bar{f_i}-1\equiv0\pmod{2\bar{\omega}}$, so $(f_i-1)(\bar{f_i}-1)\equiv0\pmod4$. On the other hand, $f_i\cdot\bar{f_i}-1=q^h_i-1\equiv2\pmod4$ because $h$ is odd by the genus theory, so $(f_i-1)+(\bar{f_i}-1)\equiv2\pmod4$, i.e. $b_i+2(a_i-1)\equiv2\pmod4$, then we have $b_i\equiv2\pmod4$.

The proof for the second assertion is similar.
\end{proof}

\begin{lem}\label{Lem6} Let $d\equiv1\pmod4$ be an integer and notations as above,
then we have

(i) $E^{(d)}(p)$ has good reduction at all the places not dividing
$pd$;

(ii) There is a basis of $E^{(d)}(p)[2]$, such that
  \[S^{(d)}\subseteq H_d:=\{(\alpha,\beta)\in({K^\times}/K^{\times2})^2|\alpha=(-\pi)^a\prod^n_{i=1}
[f^{s_i}_i\cdot(-\bar{f_i}^{t_i})]\cdot\prod^m_{j=1}[g^{r_j}_j\cdot\bar{g_j}^{u_j}]\cdot\prod^l_{k=1}[(Q^*_k)^{v_k}],$$

$$\beta=(\pi)^{a'}\prod^n_{i=1}
[(-f^{s'_i}_i)\cdot\bar{f_i}^{t'_i}]\cdot\prod^m_{j=1}[g^{r'_j}_j\cdot\bar{g_j}^{u'_j}]\cdot\prod^l_{k=1}[(Q^*_k)^{v'_k}]\}\]
where $a,...,v_k'=0\ or\
1$;

(iii) For any $v\nmid 2$, we have
$\#im(\delta_v)=4$.

\end{lem}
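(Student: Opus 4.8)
The plan is to prove each of the three parts (i), (ii), (iii) in Lemma~\ref{Lem6} using the CM theory and the explicit computations set up in the preceding lemmas.

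First I would establish part (i). Since $E(p)$ has conductor $(\sqrt{-p})$ by Theorem~\ref{construction}, and $E^{(d)}(p)$ is the quadratic twist of $E(p)$ by the character associated to $\BQ(\sqrt{d})/\BQ$, the reduction of $E^{(d)}(p)$ at a place $v$ is controlled by the ramification of $\chi_p$ (twisted by the quadratic character) at $v$. By property (ii) of the CM character, $E^{(d)}(p)$ has good reduction at $\beta$ precisely when the twisted character is unramified at $\beta$. The twisting quadratic character ramifies only at the primes dividing the discriminant of $\BQ(\sqrt{d})$, i.e. at primes dividing $d$ (here using $d \equiv 1 \pmod 4$ so $2$ is unramified in $\BQ(\sqrt{d})/\BQ$), while $\chi_p$ itself ramifies only above $p$. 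Hence good reduction holds away from $pd$.

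\medskip

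Next I would attack part (iii), since it is the technical heart and part (ii) builds on it. The strategy is the standard local computation of the image of the local Kummer map $\delta_v$. For $v \nmid 2$, the local condition is that $\mathrm{im}(\delta_v) = E^{(d)}(p)(H_v)/2E^{(d)}(p)(H_v)$ sits inside $H^1(G_{H_v}, E^{(d)}(p)[2])$. The key input is that for $v \nmid 2$ and $v$ a place of good reduction, a standard result (e.g. the formula $\#\mathrm{im}(\delta_v) = \#E(H_v)[2]$ coming from the local Euler characteristic / Tate local duality for elliptic curves) gives $\#\mathrm{im}(\delta_v) = \#E^{(d)}(p)(H_v)[2]$. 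Since the full $2$-torsion $E^{(d)}(p)[2]$ is rational over $H$ (as recorded in Lemma~\ref{Lem5} and the discussion of Proposition~\ref{prop2}), we have $E^{(d)}(p)(H_v)[2] = E^{(d)}(p)[2] \cong (\BZ/2\BZ)^2$, so the count is $4$. One must separately check the finitely many $v \nmid 2$ of bad reduction (those above $pd$), where the same order-$4$ count holds because the $2$-torsion is still fully rational and $v \nmid 2$ forces the tame local cohomology to have the expected size; I would invoke the analogous local computation in \cite{BK} that Lemma~\ref{Lem5} already relies on.

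\medskip

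Finally, for part (ii), I would combine (i), (iii), and the global Selmer definition. The point is to identify $H^1(G_H, E^{(d)}(p)[2]) \cong \Hom(G_H, (\BZ/2)^2)$ (since $E^{(d)}(p)[2]$ is a trivial $G_H$-module) with pairs in $(K^\times/K^{\times 2})^2$ via Kummer theory, once a basis of $E^{(d)}(p)[2]$ is fixed as in Lemma~\ref{Lem5}. An element of $S^{(d)} \subseteq \Sel_2$ is unramified outside the places dividing $2pd$ by part (i), and the local conditions at places above $2$ from Lemma~\ref{Lem5} pin down the $\omega$- and $\bar\omega$-components. Tracking which primes can appear in the support — namely $\pi = \sqrt{-p}$, the split primes $f_i, \bar f_i, g_j, \bar g_j$ (using the factorizations and congruences from Lemma~\ref{Lem10}), and the inert primes $Q_k^*$ — together with the sign normalizations forced by the local conditions at $\omega, \bar\omega$ gives exactly the description of $H_d$; the Galois-invariance $(\ )^{\Gal(H/K)}$ is what cuts the $H^\times$-valued classes down to $K^\times$-valued ones.

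\medskip

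I expect the main obstacle to be part (iii) in the case of bad reduction at the odd primes dividing $pd$, where the naive good-reduction count of $\#\mathrm{im}(\delta_v)$ does not directly apply and one must argue via the local Tate pairing and the explicit local structure; the bookkeeping in part (ii) — matching the exact signs and which of $f_i$ versus $-\bar f_i$ appear — is routine but error-prone, and I would lean heavily on the normalizations already established in Lemmas~\ref{Lem5} and~\ref{Lem10}.
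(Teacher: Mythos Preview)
Your proposal is correct and follows essentially the same route as the paper. Two small remarks: your worry about bad-reduction primes in (iii) is unnecessary, since the paper handles all $v\nmid 2$ uniformly via the formal-group filtration and the snake lemma (equivalently, the local Euler characteristic formula $|E(H_v)/2E(H_v)|=|E(H_v)[2]|$ for $v\nmid 2$), which needs no good-reduction hypothesis; and in (ii) the point you leave vague is made precise in the paper by invoking the oddness of $h_K=|\Gal(H/K)|$ (genus theory), which both forces the inflation--restriction isomorphism $H^1(G_K,E[2])\cong H^1(G_H,E[2])^{\Gal(H/K)}$ and allows one to write the unramified-outside-$pd$ classes in terms of the generators $\pi,f_i,\bar f_i,g_j,\bar g_j,Q_k^*$.
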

\begin{proof}

(i) This is because $E(p)$ only has bad reduction at the places over
$p$ and $d$ is congruent to 1 mod 4;

(ii) Note that by the genus theory, the order of $\Gal(H/K)$ is odd,
so both $H^1(\Gal(H/K),E[2])$ and $H^1(\Gal(H/K),E[2])$ are zero.
Then by the Serre-Hoschild exact sequence, we have
$H^1(G_K,E[2])\cong H^1(G_H,E[2])^{\Gal(H/K)}$ and so
$S^{(d)}\subseteq H^1(G_K,E[2])$.

If $(\alpha,\beta)\in ({K^\times}/K^{\times2})^2$ belongs to
$S^{(d)}$, then by Lemma~\ref{Lem5} and (i) above, we have $\alpha,\beta\in
\mathcal{O}^\times_{H_v}$ modulo $(H^\times_v)^2$ (for any $v\nmid
pd$). But $H$ is unramified over $K$, so $\alpha,\beta\in
\mathcal{O}^\times_{K_w}$ modulo $(K^\times_w)^2$ (for any $w\nmid
pd$). Also because $h$ is odd, we find
$\alpha=(\alpha)^h=\pm(\pi)\prod^n_{i=1}
[f^{s_i}_i\cdot(\bar{f_i}^{t_i})]\cdot\prod^m_{j=1}[g^{r_j}_j\cdot\bar{g_j}^{u_j}]\cdot\prod^l_{k=1}[(Q^*_k)^{v_k}]$ with $a_i,...,v_k=0,1$ and similarly for $\beta$.

By Lemma~\ref{Lem5}, we can choose a basis of
$E(p)[2]$ such that $\alpha\equiv1\pmod{\omega^2}$ and $\beta\equiv1\pmod{\bar{\omega}}^2$, so we get the
result;

(iii) Suppose $v\nmid 2$. By the theory of formal groups, there is
$\mathrm{M}\subseteq E^{(d)}(p)(H_v)$ such that $\mathrm{M}\cong
\CO_v$ and $E^{(d)}(p)(H_v)/\mathrm{M}$ is finite. Consider

*******

Apply snake lemma, we get $|E^{(d)}(p)(H_v)/2A(H_v)|\cdot |\CO_v[2]|=|E^{(d)}(p)(H_v)[2]|\cdot |\CO_v/2\CO_v|$. But as $v\nmid 2$, then we have
$|\CO_v[2]|=|\CO_v/2\CO_v|=1$ and the
result follows;

(iv) Just by the definition of the Selmer group.
\end{proof}

From Lemma~\ref{Lem6} we know that to compute $S^{(d)}$, it is
necessary to know the image of $E^{(d)}(p)[2]$ under $\delta$. For
this, we have the following

\begin{lem}\label{Lem7}For any $d\in\BZ$,
there is a basis of $E^{(d)}(p)[2]$ such that
  \[\delta(E^{(d)}(p)[2])=\{(1,1),(-\pi d,1),(1,\pi d),(-\pi d,\pi d)\}.\]
And we have
  \[im(\delta_v)=\delta_v(E^{(d)}(p)[2])\]
for any $v\mid pd$.
\end{lem}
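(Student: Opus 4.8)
The plan is to compute the local connecting map $\delta$ on the $2$-torsion explicitly using the basis provided by Lemma~\ref{Lem5}, and then globalize. Since $E^{(d)}(p)[2] \subseteq E^{(d)}(p)(H)$ and the torsion is killed by $2$, the Kummer map $\delta$ sends a $2$-torsion point $T$ to the class of the field extension $H(\frac{1}{2}T)/H$; concretely, under the identification $H^1(G_H, E[2]) \cong (K^\times/K^{\times 2})^2$ coming from the chosen basis $\{P_1, P_2\}$ of $E(p)[2]$, the image $\delta(T)$ is computed from the $x$-coordinates of the halves of $T$, i.e. from the classical formula expressing $\delta(T)$ in terms of $(x(T) - e_1, x(T) - e_2)$ where $e_1, e_2, e_3$ are the $x$-coordinates of the $2$-torsion. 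For the Gross curve $E(p)$, the discriminant is $(-p^3)$ and a Weierstrass model can be written so that the $2$-division polynomial factors with roots tied to $\pi = \sqrt{-p}$; the twist by $d$ scales these roots by $d$, which is exactly what produces the factors $-\pi d$ and $\pi d$ in the asserted formula.

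**First I would** fix a Weierstrass equation for $E(p)$ over $F = \BQ(j(E))$ compatible with the basis $P_1, P_2$ of Lemma~\ref{Lem5}, where $P_1 \equiv \CO \pmod{\omega}$ and $P_2 = \bar{P_1} \equiv \CO \pmod{\bar\omega}$. The three nonzero $2$-torsion points are $P_1$, $P_2$, and $P_1 + P_2$, and their images under $\delta$ are determined by the standard descent-via-$2$-isogeny / full-$2$-torsion formula. Computing $\delta(P_1)$, $\delta(P_2)$, $\delta(P_1+P_2)$ in the coordinates $(\alpha, \beta) \in (K^\times/K^{\times 2})^2$ and checking they equal $(-\pi d, 1)$, $(1, \pi d)$, $(-\pi d, \pi d)$ respectively (with $\delta(\CO) = (1,1)$) gives the first displayed claim; note the group law in $(K^\times/K^{\times 2})^2$ is multiplicative and the three images must multiply to $(1,1)$, which is the internal consistency check $(-\pi d)(1)(-\pi d) \cdot$ etc. Then I would verify the factor of $d$ enters precisely because twisting by $d$ replaces $x$ by $x/d$ up to squares, moving each coordinate by the class of $d$, while the $\pm\pi$ part records the congruence conditions at $\omega, \bar\omega$ from Lemma~\ref{Lem5}.

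**For the second assertion**, that $\mathrm{im}(\delta_v) = \delta_v(E^{(d)}(p)[2])$ for $v \mid pd$, the strategy is a counting argument. By Lemma~\ref{Lem6}(iii) the image of $\delta_v$ has order $4$ for $v \nmid 2$; more generally the local image has order $\#E^{(d)}(p)(H_v)[2] \cdot \#(\CO_v/2\CO_v)^{-1}$ adjusted by local conditions, and since $E^{(d)}(p)[2] \subseteq E^{(d)}(p)(H_v)$ always, the subgroup $\delta_v(E^{(d)}(p)[2])$ already has order $4$ (it is the image of the full rational $2$-torsion). The point is that at the bad places $v \mid pd$ there is no room for $\mathrm{im}(\delta_v)$ to be strictly larger: the local Mordell--Weil group modulo $2$ is exhausted by the torsion because the formal-group part contributes nothing mod $2$ when $v \nmid 2$, and at $v \mid p$ one uses the explicit ordinary/good-reduction analysis already carried out in Lemma~\ref{Lem5} together with the local Euler-characteristic formula $\#\mathrm{im}(\delta_v) = \#E(H_v)[2] \cdot \|2\|_v^{-1}$.

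**The main obstacle** I expect is the explicit determination of the $\pm\pi$ factors and the precise appearance of $d$ in $\delta(E^{(d)}(p)[2])$: this requires a careful, coordinate-level computation tying the abstract basis of Lemma~\ref{Lem5} (defined by congruences modulo $\omega^2$ and $\bar\omega^2$) to the concrete Kummer images, and correctly tracking how the quadratic twist interacts with the $2$-torsion coordinates over $K$ rather than over $H$. The subtlety is that the identification $H^1(G_H, E[2]) \cong (K^\times/K^{\times 2})^2$ is only valid after descending along $\Gal(H/K)$ (using that $h_K$ is odd, as in Lemma~\ref{Lem6}(ii)), so I would need to ensure the local images at places of $H$ are compatible with this descent and that the stated $K^\times/K^{\times 2}$-valued formula is the $\Gal(H/K)$-invariant one. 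Once the single computation at the distinguished $2$-torsion points is pinned down, both displayed claims follow, the second essentially by the order count matching $\#\mathrm{im}(\delta_v) = 4 = \#\delta_v(E^{(d)}(p)[2])$ at each $v \mid pd$.
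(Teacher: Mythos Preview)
Your overall strategy matches the paper's: use the explicit Kummer formula on the $2$-torsion relative to the basis of Lemma~\ref{Lem5}, then for the second assertion invoke the cardinality count $\#\mathrm{im}(\delta_v)=4$ from Lemma~\ref{Lem6}(iii). Two points deserve sharpening.

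For the first assertion you correctly flag the ``explicit determination of the $\pm\pi$ factors'' as the obstacle, but you do not propose a mechanism to resolve it, and this is where the actual content lies. The paper's device is a valuation argument: write $E(p):y^2=(x-e_1)(x-e_2)(x-e_3)$ with $\Delta=-p^3$, pass to a finite extension over which $E(p)$ acquires good reduction at $\pi$, and change coordinates by $x=\pi X+r$. Good reduction forces $v_\pi(e_i-e_j)\ge 1$, and $\Delta=-p^3$ gives $\sum_{i<j}v_\pi(e_i-e_j)=3$, so each $v_\pi(e_i-e_j)=1$ exactly. Feeding this into the standard formula $\delta(P_1)=\bigl(\tfrac{e_1-e_3}{e_1-e_2},\,e_1-e_2\bigr)$ etc.\ and combining with the constraint from Lemma~\ref{Lem6}(ii) that each coordinate lies in $\{\pm 1,\pm\pi\}\bmod K^{\times 2}$, together with the $2$-adic congruences of Lemma~\ref{Lem5}, pins down the images uniquely. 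Your plan to ``compute and check'' is not wrong, but without the valuation input you have no way to see that the answer is $(-\pi,1),(1,\pi),(-\pi,\pi)$ rather than some other element of $\{\pm 1,\pm\pi\}^2$. The reduction to $d=1$ (twisting replaces $e_i-e_j$ by $d(e_i-e_j)$) is as you say.

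For the second assertion, your claim that $\delta_v(E^{(d)}(p)[2])$ ``already has order $4$'' requires justification: one must check that the four global classes $(1,1),(-\pi d,1),(1,\pi d),(-\pi d,\pi d)$ remain distinct in $(K_v^\times/K_v^{\times 2})^2$ for $v\mid pd$. The paper observes this directly (since $\pm\pi d$ has odd valuation at each such $v$), and then equality with $\mathrm{im}(\delta_v)$ follows from Lemma~\ref{Lem6}(iii). Your Euler-characteristic and formal-group remarks are correct but superfluous once Lemma~\ref{Lem6}(iii) is in hand.
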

\begin{proof}
For the first assertion, it is enough to verify this for the case $d=1$. Fix the basis as in Lemma~\ref{Lem5}.

Take a Weierstrass equation over $H$ of
$E(p):y^2=(x-e_1)(x-e_2)(x-e_3)$ with $\Delta(E(p))=-p^3$. Since
$E(p)$ has potentially good reduction everywhere, we can find some
finite extension of $H$ such that $E(p)$ has good reduction at
$\pi$. Then a change of coordinates of the form
  \[\left\{\begin{array}{l}x=\pi X+r\\y=\pi^{\frac{3}{2}}Y+s\pi X+t\end{array}\right.\]
gives a Weierstrass equation $\mathcal {E}(p):f(X,Y)=0$ with good
reduction at $\pi$. Notice that $P_i=(e_i,0)$'s are the 2-torsion
points, we have
  \[v_{\pi}(X(P_i)-X(P_j))\geq0,\ \forall i\neq j,\]
and then $v_{\pi}(e_i-e_j)\geq 1$. But $\Delta(E(p))=-p^3$ implies
$2\sum_{i<j}v_{\pi}(e_i-e_j)=6$, hence we have $v_{\pi}(e_i-e_j)=1$.

By~\cite{Siv}, Proposition 14, we have
  \[\begin{split}\delta(P_0)&=(x_0,y_0)=(1,1),\\
                 \delta(P_1)&=(x_1,y_1)=(\frac{e_1-e_3}{e_1-e_2},e_1-e_2),\\
                 \delta(P_2)&=(x_2,y_2)=(e_2-e_1,\frac{e_2-e_3}{e_2-e_1}),\\
                 \delta(P_3)&=(x_3,y_3)=(e_3-e_1,e_3-e_2).
  \end{split}\]
Since Lemma~\ref{Lem6} implies that $x_i,y_i\equiv(-1)^a\pi^b$ with
$a,b=0\ or\ 1$, by combining the above results and Lemma~\ref{Lem5}, we have
  \[\begin{split}\delta(P_0)&=(x_0,y_0)=(1,1),\\
                 \delta(P_1)&=(x_1,y_1)=(1,\pi),\\
                 \delta(P_2)&=(x_2,y_2)=(-\pi,1),\\
                 \delta(P_3)&=(x_3,y_3)=(-\pi,\pi).
  \end{split}\]

For the second assertion, we note that the four elements
  \[(1,1),(-\pi d,1),(1,\pi d),(-\pi d,\pi d)\]
are distinct in $K^{\times}_v$ for any $v\mid pd$, so the result follows from Lemma~\ref{Lem6}, (iii).
\end{proof}

Now we can prove our main theorem which gives a complete description of the elements in $S^{(d)}$ for $d\equiv1\pmod4$.

\begin{thm}\label{main thm}
$(\alpha,\beta)\in
S^{(d)}$ is equivalent to $(\alpha,\beta)\in H_d$ and there is
\[(x_{i(v)},y_{i(v)})\in \{(1,1),(-\pi d,1),(1,\pi d),(-\pi d,\pi d)\}\]
such that $\alpha x_{i(v)}\in {K^\times_v}^2$ and $\beta y_{i(v)}\in {K^\times_v}^2$ for any place $v\mid pd$ of $K$.
\end{thm}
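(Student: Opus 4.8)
The plan is to unwind the definition of the Selmer group one place at a time and to match each local condition against one of the two requirements in the statement. Recall that, since the order of $\Gal(H/K)$ is the odd number $h$, the argument in the proof of Lemma~\ref{Lem6}(ii) identifies $S^{(d)}$ with a subgroup of $H^1(G_K,E^{(d)}(p)[2])\cong(K^\times/K^{\times2})^2$, and (using that $H/K$ is unramified) the defining local conditions descend to conditions over $K$. Thus $(\alpha,\beta)\in S^{(d)}$ precisely when its localization $\mathrm{res}_v(\alpha,\beta)$ lies in $\mathrm{im}(\delta_v)$ for every place $v$ of $K$. The whole content is therefore to show that the conditions at the places $v\nmid pd$ are equivalent to membership in $H_d$, while the conditions at the places $v\mid pd$ are equivalent to the displayed existence of $(x_{i(v)},y_{i(v)})$.

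First I would treat the places $v\nmid 2pd$. By Lemma~\ref{Lem6}(i) the curve has good reduction there, and by Lemma~\ref{Lem6}(iii) we have $\#\mathrm{im}(\delta_v)=4$; since $E^{(d)}(p)[2]$ is a trivial Galois module of order $4$, the unramified subgroup $H^1_{\mathrm{ur}}(G_{K_v},E^{(d)}(p)[2])$ also has order $4$ and contains $\mathrm{im}(\delta_v)$, so the two coincide. Hence the local condition at such $v$ is exactly that $\alpha$ and $\beta$ be units modulo squares. For $(\alpha,\beta)\in H_d$ every generator $\pi,f_i,\bar f_i,g_j,\bar g_j,Q_k^{*}$ is a $v$-adic unit, so this holds automatically; conversely, imposing it at all such $v$ and raising to the odd power $h$ to rewrite $q_i^h=f_i\bar f_i$ and ${q'_j}^h=g_j\bar g_j$ in terms of principal generators forces exactly the shape recorded in $H_d$. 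This is the computation already carried out in Lemma~\ref{Lem6}(ii).

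Next I would handle the places $v\mid 2$, namely $v\mid\omega$ and $v\mid\bar\omega$. Here Lemma~\ref{Lem5} gives $\mathrm{im}(\delta_v)$ explicitly, and I claim every $(\alpha,\beta)\in H_d$ already satisfies it, so no constraint beyond $H_d$ appears. The point is that the sign conventions built into the definition of $H_d$, together with the normalizations of Lemma~\ref{Lem10} (that $f_i=a_i+b_i\frac{1+\pi}{2}$ with $a_i\equiv1\pmod 4$ and $v_2(b_i)=1$, and $g_j$ with $a'_j\equiv1\pmod4$ and $v_2(b'_j)\ge2$), force $\alpha\in 1+\omega^2\CO_v^\times$ and $\beta\in 1+\omega\CO_v^\times$ modulo squares at $v\mid\omega$, and symmetrically at $v\mid\bar\omega$. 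Matching these against the description in Lemma~\ref{Lem5} then shows the condition at the primes over $2$ is automatic for elements of $H_d$.

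Finally, for the places $v\mid pd$ I would invoke Lemma~\ref{Lem7}, which identifies $\mathrm{im}(\delta_v)=\delta_v(E^{(d)}(p)[2])$ with the four classes $\{(1,1),(-\pi d,1),(1,\pi d),(-\pi d,\pi d)\}$ in $(K_v^\times/{K_v^\times}^2)^2$. The condition $\mathrm{res}_v(\alpha,\beta)\in\mathrm{im}(\delta_v)$ then says exactly that one of these four pairs $(x_{i(v)},y_{i(v)})$ satisfies $\alpha x_{i(v)}\in{K_v^\times}^2$ and $\beta y_{i(v)}\in{K_v^\times}^2$, which is the second clause of the theorem. Assembling the three ranges of places yields the stated equivalence. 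I expect the delicate step to be the analysis at the primes above $2$: there one must verify that the image $\mathrm{im}(\delta_v)$ of Lemma~\ref{Lem5} contains every element of $H_d$, and this rests on a careful $2$-adic bookkeeping of the congruences from Lemma~\ref{Lem10} and of the signs in the definition of $H_d$, rather than on any single clean identity.
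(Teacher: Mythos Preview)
Your proof is correct and follows essentially the same approach as the paper, which simply cites the definition of the Selmer group together with Lemma~\ref{Lem6} and Lemma~\ref{Lem7}. Your place-by-place decomposition makes explicit what the paper leaves implicit---in particular the verification at the primes above $2$, which the paper absorbs into the proof of Lemma~\ref{Lem6}(ii) rather than restating here---but the logical content is the same.
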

\begin{proof}
This follows from the definition of Selmer group, combining with Lemma~\ref{Lem6} and Lemma~\ref{Lem7}.
\end{proof}

In practice, one can always compute $S^{(d)}$ by Theorem~\ref{main thm}. In the following, we give a graphical description of it, which seems more convenient to use.

\begin{defn}
Let $d\equiv1\pmod4$ and $f_i,g_j,Q_k$ as above.

Define a (oriented) graph $G_d$ as following:

vertex of $G_d$=$\{-\pi,f_i,-\bar{f_i},g_j,\bar{g_j},Q^*_k\}_{1\leq i\leq n,1\leq j\leq m,1\leq k\leq l}$

arrows of $G_d$: there exist an arrow from $x$ to $y$ if and only if $(\frac{x}{y})=-1$ (here the symbol $(\frac{x}{y})$ is the quadratic residue symbol in $K$)
\end{defn}

\begin{thm}\label{main thm'}
For every $d\equiv1\pmod4$, we have
 \[\rank_{\mathcal{O}/2\mathcal{O}}(S^{(d)})=1+2t\]
where t is the number of even partitions of $G_d$.

In particular, $S^{(d)}$ is minimal if and only if $G_d$ is an odd graph.
\end{thm}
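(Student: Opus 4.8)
The plan is to convert the arithmetic description of $S^{(d)}$ given by Theorem~\ref{main thm} into a homogeneous system of $\mathbb{F}_2$-linear equations whose coefficient data is exactly the adjacency data of the graph $G_d$, and then to compute the dimension of the solution space purely combinatorially. The three ingredients are: a coordinatization of $H_d$, the reduction of each local condition to quadratic residue symbols in $K$, and the identification of the resulting kernel dimension with $1+2t$.

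First I would fix coordinates. By Lemma~\ref{Lem6}(ii) a pair $(\alpha,\beta)\in H_d$ is determined by its exponents over the generators $-\pi,f_i,-\bar{f_i},g_j,\bar{g_j},Q^*_k$, so both $\alpha$ and $\beta$ are recorded by vectors in the $\mathbb{F}_2$-vector space $V$ spanned by the vertices of $G_d$ (the differing signs in the $\beta$-generators only relabel an isomorphic copy of $V$). Thus $H_d\cong V\oplus V$, and the task becomes describing the subspace $S^{(d)}\subseteq V\oplus V$ cut out by the local conditions. Next I would make those conditions explicit: for each place $v\mid pd$ of $K$, Theorem~\ref{main thm} requires that, for one of the four vectors of Lemma~\ref{Lem7}, both $\alpha x_{i(v)}$ and $\beta y_{i(v)}$ be squares in $K^\times_v$. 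Since every generator is a prime (or $\pm\pi$) and $v\nmid 2$, local squareness at $v$ is detected by the quadratic residue symbol $(\frac{\cdot}{v})$; evaluating these symbols on the generators, using reciprocity in $K$ together with the normalizations of Lemma~\ref{Lem10} and the fact that $h_K$ is odd, turns each local constraint into an $\mathbb{F}_2$-linear equation whose coefficients are precisely the arrow data ``$(\frac{x}{y})=-1$'' of $G_d$. Assembling these equations over all $v\mid pd$ produces a single matrix $M$ over $\mathbb{F}_2$ with $\dim_{\mathbb{F}_2}S^{(d)}=\dim_{\mathbb{F}_2}\ker M$.

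Finally I would read $\dim\ker M$ off from $G_d$. The kernel always contains the distinguished class coming from the image of the rational $2$-torsion point $(-\pi d,\pi d)$ of Lemma~\ref{Lem7} (which is present precisely because $p\equiv 7\pmod 8$ forces $(\frac{2}{p})=1$ in Theorem~\ref{summary}); this accounts for the odd ``$+1$''. The remaining kernel vectors are exactly the subsets of vertices at which all incidence parities vanish, i.e. the even partitions of $G_d$, and the complex-conjugation symmetry $P_1\leftrightarrow P_2=\bar{P_1}$ interchanging the two coordinates makes each such partition occur in two independent copies (an $\alpha$-copy and a $\beta$-copy), contributing $2t$. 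This yields $\rank_{\mathcal{O}/2\mathcal{O}}(S^{(d)})=1+2t$, and $S^{(d)}$ is minimal (that is, $t=0$) exactly when $G_d$ admits no nontrivial even partition, i.e. when $G_d$ is an odd graph.

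I expect the main obstacle to be the middle step: verifying that the sign conventions deliberately built into $H_d$ (the choices $-\pi$ and $-\bar{f_i}$), the ramified place over $p$, and the inert places $Q_k$ all conspire so that the local squareness conditions reduce cleanly to the residue-symbol arrow data of $G_d$ with no stray correction terms. Once this translation is checked place-by-place, the passage from $\dim\ker M$ to the count $1+2t$ is a formal consequence of the definition of even partition and of the conjugation symmetry on the two coordinates.
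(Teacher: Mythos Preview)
Your proposal is correct and follows essentially the same route as the paper. Both arguments coordinatize $H_d$ by the vertex set, reduce the local conditions of Theorem~\ref{main thm} at each $v\mid pd$ to quadratic residue symbols in $K$, and identify the resulting $\BF_2$-linear constraints with the even-partition condition on $G_d$; the only cosmetic difference is that the paper handles the $\beta$-coordinate by introducing a second graph $G_d'$ (with vertices $\pi,-f_i,\bar f_i,g_j,\bar g_j,Q_k^*$) and writing down an explicit bijection $\phi:G_d\to G_d'$ that preserves arrows, which is exactly your ``complex-conjugation symmetry $P_1\leftrightarrow P_2$'' made concrete.
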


\begin{proof}

Define graph $G_d'$ with vertex $\{\pi,-f_i,\bar{f_i},g_j,\bar{g_j},Q^*_k\}_{1\leq i\leq n,1\leq j\leq m,1\leq k\leq l}$, and there is an arrow from $x$ to $y$ if and only if $(\frac{x}{y})=-1$.

Given $(\alpha,\beta)\in H_d$, we have a partition $V_{\alpha}\bigcup V_{n\alpha}$ of $G_d$ with $V_{\alpha}=\{x:x\mid\alpha\}$, and similarly a partition $V_{\beta}\bigcup V_{n\beta}$ of $G_d'$. Now $(\alpha,\beta)\in S^{(d)}$ means that $(\frac{\alpha}{x})=1$ for any $x\nmid\alpha$ and $(\frac{-\pi d/\alpha}{x})=1$ for any $x\mid\alpha$, and the same for $\beta$ which is equivalent to say that $V_{\alpha}\bigcup V_{n\alpha}$ and $V_{\beta}\bigcup V_{n\beta}$ are even partitions. Note that $\alpha$ and $-\pi d/\alpha$ correspond to the same partition and the same for $\beta$ and $\pi d/\beta$, we will obtain the assertion if we can show that if we can show the map $\phi:G\rightarrow G_d', -\pi\mapsto\pi,f_i\mapsto\bar{f_i},-\bar{f_i}\mapsto-f_i,g_j\mapsto\bar{g_j},\bar{g_j}\mapsto g_j,Q^*_k\mapsto Q^*_k$ is an isomorphism, i.e. there is an arrow from $x$ to $y$ if and only if there is an arrow from $\phi(x)$ to $\phi(y)$, which is obvious.
\end{proof}

\subsection{Numerical examples}
\begin{lem}\label{Lem8}
(i)If $Q$ is a prime such that $(\frac{-p}{Q})=-1$, then we have
$\pm\pi\in (K^\times_Q)^2$ if and only if $Q$ is congruent to $3$
modulo $4$;

(ii)If $Q$ is a prime such that $(\frac{-p}{Q})=-1$, then we always
have $-1\in (K^\times_Q)^2$;

(iii)If $Q_1$ and $Q_2$ are primes such that $(\frac{-p}{Q_i})=-1$
$(i=1,2)$, then we always have $Q^*_1\in (K^\times_{Q_2})^2$.

\end{lem}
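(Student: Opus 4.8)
The plan is to reduce all three parts to the structure of squares in the completion $K_Q$. Since $\sfrac{-p}{Q}=-1$ forces $Q\neq p$ and makes $Q$ inert in $K$, the group $K^{\times}_Q$ is the multiplicative group of the unramified quadratic extension of $\BQ_Q$, whose residue field is $\BF_{Q^2}$ and for which the rational prime $Q$ is still a uniformizer. Because $Q$ is odd, Hensel's lemma shows that a $Q$-adic unit $u\in\CO^{\times}_{K_Q}$ is a square in $K^{\times}_Q$ if and only if its reduction lies in $(\BF_{Q^2}^{\times})^2$. The single observation driving the whole lemma is that every element of the prime field $\BF_Q^{\times}$ is a square in $\BF_{Q^2}^{\times}$: indeed $\BF_{Q^2}^{\times}$ is cyclic of order $Q^2-1$, its squares form the unique subgroup of index $2$, and since $Q$ is odd one has $(Q-1)\mid (Q^2-1)/2$, so the order-$(Q-1)$ subgroup $\BF_Q^{\times}$ sits inside the subgroup of squares.

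Parts (ii) and (iii) then fall out at once. For (ii), $-1\in\BF_Q^{\times}$ is a square in $\BF_{Q^2}^{\times}$ by the observation above (equivalently, $(Q^2-1)/2$ is even), so $-1\in(K^{\times}_Q)^2$. For (iii), taking $Q_1\neq Q_2$, the integer $Q_1^{*}=\pm Q_1$ is a $Q_2$-adic unit whose reduction lies in $\BF_{Q_2}^{\times}$, hence is again a square in $\BF_{Q_2^2}^{\times}$, giving $Q_1^{*}\in(K^{\times}_{Q_2})^2$.

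The substance is (i). Here $\pi=\sqrt{-p}$ is a $Q$-adic unit (as $Q\neq p$), and its reduction $\bar\pi$ satisfies $\bar\pi^2=-p$ in $\BF_{Q^2}$; since $\sfrac{-p}{Q}=-1$ we have $\bar\pi\notin\BF_Q$, so $\bar\pi$ generates $\BF_{Q^2}$ with minimal polynomial $X^2+p$. The clean way to test whether $\bar\pi$ is a square is to push it through Frobenius: the nontrivial element of $\Gal(\BF_{Q^2}/\BF_Q)$ sends $\bar\pi$ to the other root $-\bar\pi$, so $\bar\pi^{Q}=-\bar\pi$ and hence $\bar\pi^{Q+1}=-\bar\pi^2=p$. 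Raising to the $(Q-1)/2$ power gives $\bar\pi^{(Q^2-1)/2}=p^{(Q-1)/2}=\sfrac{p}{Q}$, so $\pi\in(K^{\times}_Q)^2$ exactly when $\sfrac{p}{Q}=1$. Finally, $\sfrac{-p}{Q}=\sfrac{-1}{Q}\sfrac{p}{Q}=-1$ together with $\sfrac{-1}{Q}=(-1)^{(Q-1)/2}$ shows $\sfrac{p}{Q}=1$ if and only if $Q\equiv3\pmod4$; combined with (ii) this yields $\pm\pi\in(K^{\times}_Q)^2\iff Q\equiv3\pmod4$.

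I expect the only genuine obstacle to be the exponent bookkeeping in (i): a head-on evaluation of $\bar\pi^{(Q^2-1)/2}$ forces one to track the factor of $4$ in $(Q^2-1)/4$ and to split into the cases $Q\equiv1,3\pmod4$. The Frobenius identity $\bar\pi^{Q}=-\bar\pi$ is precisely what lets me sidestep this, collapsing the question to the single Legendre symbol $\sfrac{p}{Q}$; the remaining care is just to keep the roles of $\sfrac{p}{Q}$ and $\sfrac{-p}{Q}$ (hence of $\sfrac{-1}{Q}$) straight.
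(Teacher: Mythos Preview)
Your proof is correct, and it takes a genuinely different route from the paper.

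The paper works by brute force with Hensel's lemma: for each part it writes a general element of $\CO_{K_Q}$ as $a+b\pi$ and asks when $(a+b\pi)^2$ can be congruent to the target modulo $Q$. For (i) this unwinds to the equation $a^4\equiv p/4\pmod Q$, which is then analyzed by noting that in $\BF_Q$ one of $\pm x$ is always a square when $Q\equiv3\pmod4$; part (ii) is then \emph{deduced from} (i) (both $\pi$ and $-\pi$ being squares forces $-1$ to be one), and (iii) is a case split on $\sfrac{Q_1^*}{Q_2}$, setting $b=0$ or $a=0$ accordingly.

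Your argument instead exploits the residue field $\BF_{Q^2}$ directly. The single structural fact $\BF_Q^{\times}\subset(\BF_{Q^2}^{\times})^2$ dispatches (ii) and (iii) in one line, with no case analysis. For (i), the Frobenius identity $\bar\pi^{\,Q}=-\bar\pi$ is a clean shortcut: it reduces the quadratic-character test $\bar\pi^{(Q^2-1)/2}$ to the single Legendre symbol $\sfrac{p}{Q}$, bypassing the paper's fourth-power equation and its implicit appeal to the fact that $-1$ is a non-square in $\BF_Q$ when $Q\equiv3\pmod4$. The trade-off is that the paper's computation is entirely self-contained at the level of congruences, whereas yours invokes (mild) structure theory of finite fields; in return you get a uniform argument and avoid the dependency of (ii) on (i). Your explicit remark that $Q_1\neq Q_2$ in (iii) is also a useful clarification the paper leaves tacit.
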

\begin{proof}

(i)If $Q$ is congruent to $3$ modulo $4$. By Hensel lemma, it is
enough to solve $(a+b\pi)^2)\equiv\pm\pi$ $(Q)$. This is equivalent
to $a^2-pb^2\equiv0$ $(Q)$ and $2ab\equiv\pm1$ $(Q)$. So we only
need to show $a^4\equiv\frac{p}{4}$ $(Q)$ has solution in
$\mathbb{Z}$. But as $Q$ is congruent to $3$ modulo $4$, we have
$(\frac{p}{Q})=-(\frac{-p}{Q})=1$, so there is $x\in\mathbb{Z}$ such
that $x^2\equiv\frac{p}{4}$ $(Q)$. As one of $x$ and $-x$ is also a
square modulo $Q$, we can then get the solution of
$a^4\equiv\frac{p}{4}$ $(Q)$.

If $Q$ is congruent to $1$ modulo $4$, then
$(\frac{p}{Q})=(\frac{-p}{Q})=-1$, so the equation doesn't have any
solutions.

(ii)This is well known if $Q$ is congruent to $1$ modulo $4$. B ut
$(1)$ above implies this is also true for $Q$ congruent to $3$
modulo $4$.

(iii)By Hensel lemma, it is enough to solve $(a+b\pi)^2\equiv
Q^*_1$ $(Q_2)$. This is equivalent to $a^2-pb^2\equiv Q^*_1$ $(Q_2)$
and $2ab\equiv 0$ $(Q_2)$. If $(\frac{Q^*_1}{Q_2})=1$, then we can
get a solution by setting $b\equiv0$. If $(\frac{Q^*_1}{Q_2})=-1$,
then set $a\equiv0$ to solve $b^2\equiv -pQ^*_1$, which has solution
as $(\frac{-pQ^*_1}{Q_2})=1$.

\end{proof}

\begin{thm}\label{thmm}Let $d=\prod\limits^n_{i=1}Q^*_i$ be a square-free rational integer, where $Q_i$
are odd rational primes such that $(\frac{-p}{Q_i})=-1$, then
\[E^{(d)}(p)(H)=E^{(d)}(p)[2]\ and\ (\Sha(E^{(d)}(p)/H)[2])^{\Gal(H/K)}={1}\]
if and only if $Q_i\equiv1\pmod4$ for any $i=1,...,n$.

Moreover, we have $\rank_{\mathcal{O}/2\mathcal{O}}S^{(d)}\geq 1+k$, where $k$ is the number of those $Q_i$ which is congruent to $3$ module $4$.
\end{thm}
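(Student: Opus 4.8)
The plan is to reduce the two asserted equalities to the single computation $\rank_{\CO/2\CO} S^{(d)}=1$, and then to evaluate $\rank_{\CO/2\CO} S^{(d)}$ explicitly from Theorem~\ref{main thm} and Lemma~\ref{Lem8}.

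First I would pass to $\Gal(H/K)$-invariants in the exact sequence of Proposition~\ref{prop1}. Since the discriminant of $K$ has a single prime factor, genus theory gives that $h_K=|\Gal(H/K)|$ is odd, so all higher cohomology of $\Gal(H/K)$ with coefficients in the $2$-groups involved vanishes and the invariants functor is exact. Combined with Proposition~\ref{prop2}, the invariants of $E^{(d)}(p)(H)/2E^{(d)}(p)(H)$ form a free $\CO/2\CO$-module of rank $1+n(d)$ (the group-ring summand contributes only its norm element), so that $\rank_{\CO/2\CO} S^{(d)} = 1 + n(d) + \rank_{\CO/2\CO}(\Sha(E^{(d)}(p)/H)[2])^{\Gal(H/K)}$. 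Since $E^{(d)}(p)[2]\subseteq E^{(d)}(p)(H)$, the equality $E^{(d)}(p)(H)=E^{(d)}(p)[2]$ amounts to $n(d)=0$ together with the absence of further torsion; granting the latter, both asserted conditions hold precisely when $\rank_{\CO/2\CO} S^{(d)}=1$.

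Next I would compute $\rank_{\CO/2\CO} S^{(d)}$. As every $Q_i$ is inert and $d=\prod_i Q_i^*$, the places dividing $pd$ are $\pi$ and the inert primes $Q_1,\dots,Q_n$, and the four local vectors of Theorem~\ref{main thm} factor as $\{1,-\pi d\}\times\{1,\pi d\}$, so the local conditions decouple into one condition on $\alpha$ and a mirror condition on $\beta$; by the $\phi$-symmetry it suffices to count the admissible $\alpha=(-\pi)^a\prod_i(Q_i^*)^{v_i}$. I would then read off the local square classes from Lemma~\ref{Lem8}: at $Q_j$ one has $-\pi\in(K_{Q_j}^\times)^2$ iff $Q_j\equiv3\pmod4$ and $Q_k^*\in(K_{Q_j}^\times)^2$ for $k\neq j$, while at $\pi$ quadratic reciprocity together with $\left(\frac{-p}{Q_k}\right)=-1$ and $p\equiv3\pmod4$ gives $Q_k^*\in(K_\pi^\times)^2$ iff $Q_k\equiv3\pmod4$.

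Feeding these into the requirement $\alpha\in\{1,-\pi d\}$ at each $v\mid pd$ produces an $\BF_2$-linear system in $a,v_1,\dots,v_n$: each prime $Q_j\equiv1\pmod4$ forces $v_j=a$, each prime $Q_j\equiv3\pmod4$ imposes no constraint, and the condition at $\pi$ is then automatic. Hence the solution space has dimension $1+k$, where $k=\#\{i:Q_i\equiv3\pmod4\}$, and therefore $\rank_{\CO/2\CO} S^{(d)}=1+k$. Combining with the first step, $\rank_{\CO/2\CO} S^{(d)}=1$ exactly when $k=0$, i.e. when every $Q_i\equiv1\pmod4$, which is the asserted equivalence; and the bound $\rank_{\CO/2\CO} S^{(d)}\geq1+k$ of the last assertion follows, in fact with equality. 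The main obstacle will be the clean evaluation of the local square symbols—especially at the ramified prime $\pi$, where the reciprocity bookkeeping in $K$ must be carried out carefully—together with justifying that rank zero forces $E^{(d)}(p)(H)=E^{(d)}(p)[2]$ rather than merely finiteness, which requires ruling out additional torsion using the CM structure.
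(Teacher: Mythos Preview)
Your approach is correct and in fact yields the sharper conclusion $\rank_{\CO/2\CO}S^{(d)}=1+k$ (equality, not just the inequality stated). It is essentially the specialization of the paper's graph criterion (Theorem~\ref{main thm'}) to the case where all prime factors of $d$ are inert: the graph $G_d$ has edges only between $-\pi$ and those $Q_i^*$ with $Q_i\equiv1\pmod4$, so the $Q_i^*$ with $Q_i\equiv3\pmod4$ are isolated vertices and contribute freely to the even partitions, exactly as your linear system shows.

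The paper's own proof of this theorem takes a more hands-on route: for the ``if'' direction it argues by contradiction that any $(\alpha,\beta)\in S^{(d)}$ outside $\delta(E^{(d)}(p)[2])$ would fail the local condition at some $Q_i$ (using Lemma~\ref{Lem8}(i) directly), and for the ``only if'' and the lower bound it exhibits the explicit Selmer elements $(1,Q_i^*)$ for each $Q_i\equiv3\pmod4$ and checks the local conditions one by one. Your linear-algebra packaging is cleaner and gives the exact rank, while the paper's argument has the advantage of producing concrete independent Selmer classes. Note also that your passage from ``the $\alpha$-space has $\BF_2$-dimension $1+k$'' to ``$\rank_{\CO/2\CO}S^{(d)}=1+k$'' implicitly uses that $\CO/2\CO\cong\BF_2\times\BF_2$ (since $2$ splits when $p\equiv7\pmod8$) and that the $\alpha$- and $\beta$-components are the two idempotent pieces; this deserves a sentence. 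Finally, the torsion issue you flag---why $n(d)=0$ forces $E^{(d)}(p)(H)=E^{(d)}(p)[2]$ rather than merely finiteness---is not addressed in the paper's proof either, so you are not missing anything the paper supplies.
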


\begin{proof}
If all the $Q_i$ are congruent to $1$ module $4$, we want to show that $(\alpha,\beta)\in S^{(d)}$ implies
$(\alpha,\beta)\in im(E^{(d)}(p)[2])$.

Suppose there is some $(\alpha,\beta)\in S^{(d)}$ not in
$im(E^{(d)}(p)[2])$. then either $\alpha\neq1,-\pi$ or
$\beta\neq1,\pi$.

If $\alpha\neq1,-\pi$, multiplying suitable element in
$\beta\neq1,\pi$, we may assume $\pi\mid\alpha$. Then by Lemma 9, we
have $\alpha$ is not in ${K^\times_{Q_i}}^2$ for any
$Q_i\nmid\alpha$. So we must have $\alpha=1\ or\ -\pi$.

If $\beta\neq1,\pi$, multiplying suitable element in
$\beta\neq1,\pi$, we may assume $\pi\mid\beta$. Then by Lemma 9, we
have $\beta$ is not in ${K^\times_{Q_i}}^2$ for any $Q_i\nmid\beta$.
So we must have $\beta=1\ or\ \pi$.

If there is some $Q_i\equiv1\pmod4$, we claim that $(1,Q^*_i)\in S^{(d)}$. At $Q_j$ for $j\neq i$, we have $Q^*_i\in (K^\times_{Q_j})^2$; at $\pi$, as $\frac{Q^*_i}{p}=\frac{p}{Q_i}=1$, we also have $(1,Q^*_i)\in im(\delta_{\pi})$; at $Q_i$, multiply it by $(1,\pi d)$ to get $(1,\pi\prod_{j\neq i}Q^*_i)$ with $\pi\prod_{j\neq i}Q^*_i\in {K^{\times}_{Q_i}}^2$ by Lemma~\ref{Lem8}. Now the claim follows from Lemma~\ref{Lem6}, (iv). This complete the first assertion of Theorem~\ref{thmm}.

By the above, we see that we always have $(1,Q^*_i)\in S^{(d)}$ for $Q_i\equiv3\mod4$. Since these elements are linearly independent in $S^{d}$, we complete the proof of Theorem~\ref{thmm}.

\end{proof}

\begin{cor}
Let $d$ be as in Theorem~\ref{thmm}  with $d>0$ and $p>4d^2\lg|d|$, then the BSD conjecture is true for $E^{(d)}(p)$ and $\Sha(E^{(d)}(p)/H)[2]=\Sel_2(E^{(d)}(p)/H)$. In particular, we can construct arbitrarily large Shafarevich-Tate group by choosing $p$ large enough and $d$ contains enough $Q$ which is congruent to $3$ modulo $4$.
\end{cor}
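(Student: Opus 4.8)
The plan is to deduce the analytic rank zero case of the Birch--Swinnerton-Dyer conjecture for the CM $\BQ$-curve $E^{(d)}(p)$ from the single analytic input that its central $L$-value does not vanish, the hypothesis $p>4d^2\lg|d|$ being exactly what guarantees this non-vanishing. The engine is the theory of complex multiplication: for an elliptic curve with CM by $\CO$, once one knows $L(E^{(d)}(p)/H,1)\neq 0$, the theorem of Coates and Wiles forces $\rank E^{(d)}(p)(H)=0$, and Rubin's theorem (the main conjecture of Iwasawa theory for the imaginary quadratic field $K$, applied to the Hecke character $\chi$ attached to $E^{(d)}(p)$, whose $L$-function factors $L(E^{(d)}(p)/H,s)$ into Hecke $L$-functions indexed by $\widehat{\Gal(H/K)}$) upgrades this to the finiteness of $\Sha(E^{(d)}(p)/H)$ together with the exact order formula, i.e. to BSD. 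Thus the whole proof reduces to (a) computing the sign of the functional equation and (b) establishing the non-vanishing.

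For (a) I would first record that by Theorem~\ref{summary}(2) the $\epsilon$-factor of the untwisted Gross curve is $(\tfrac{2}{p})$, which equals $+1$ since $p\equiv 7\pmod 8$. The twisting integer $d>0$ is a product $\prod Q_i^{*}$ of primes inert in $K$ and prime to the conductor $(\sqrt{-p})$, with $d\equiv 1\pmod 4$; a local root-number computation at the $Q_i$, at $\infty$ (where $d>0$ contributes $+1$), and at the primes above $p$ then shows the global sign is unchanged, so $w(E^{(d)}(p)/H)=+1$ and the analytic rank is even, the actual vanishing order being pinned to $0$ in step (b).

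For (b), which I expect to be the main obstacle, I would invoke the explicit formula of Gross (\cite{G3}, \cite{G}) expressing $L(E^{(d)}(p)/H,1)$, up to the standard CM period, as the square of a finite arithmetic sum attached to $\chi$ and to $d$. The strategy is to show that the main term of this expression has size a fixed positive power of $p$, while every term one must control is bounded by a quantity of size $O(d^2\lg|d|)$; the hypothesis $p>4d^2\lg|d|$ is precisely the threshold making the main term dominate, so the sum, and hence $L(E^{(d)}(p)/H,1)$, is strictly positive. The delicate point is to make the period normalization, the main term, and the error estimate completely explicit and to verify that the constant $4$ and the exponent on $d$ are the correct bookkeeping.

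With $L(E^{(d)}(p)/H,1)\neq 0$ in hand the conclusion is formal. Coates--Wiles gives $\rank E^{(d)}(p)(H)=0$, so $E^{(d)}(p)(H)$ is finite; since $E^{(d)}(p)[2]\subseteq E^{(d)}(p)(H)$ and $E^{(d)}(p)(\bar H)[2]=E^{(d)}(p)[2]\cong\CO/2\CO$, we get $E^{(d)}(p)(H)[2]=E^{(d)}(p)[2]$ and hence $|E^{(d)}(p)(H)/2E^{(d)}(p)(H)|=|E^{(d)}(p)(H)[2]|=4$, a single $\CO/2\CO$-line carrying the trivial $\Gal(H/K)$-action. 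Rubin's theorem then makes $\Sha(E^{(d)}(p)/H)$ finite of the predicted order, so BSD holds. Taking $\Gal(H/K)$-invariants in the descent sequence
\[0\to E^{(d)}(p)(H)/2E^{(d)}(p)(H)\to \Sel_2(E^{(d)}(p)/H)\to \Sha(E^{(d)}(p)/H)[2]\to 0\]
and feeding in the bound $\rank_{\CO/2\CO}S^{(d)}\geq 1+k$ from Theorem~\ref{thmm} yields $\rank_{\CO/2\CO}(\Sha(E^{(d)}(p)/H)[2])^{\Gal(H/K)}\geq k$, so the whole of $\Sel_2$ beyond the fixed Mordell--Weil line is accounted for by $\Sha[2]$. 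Finally, fixing an even number $k$ of inert primes $Q_i\equiv 3\pmod 4$ (evenness keeps $d>0$), forming $d$, and then choosing $p\equiv 7\pmod 8$ with $(\tfrac{-p}{Q_i})=-1$ for all $i$ and $p>4d^2\lg|d|$ --- possible by Dirichlet's theorem applied to the relevant congruences --- makes $k$ arbitrarily large, producing elliptic curves with arbitrarily large Tate--Shafarevich groups.
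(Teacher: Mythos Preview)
Your proposal is correct and follows essentially the same route as the paper: non-vanishing of $L(E^{(d)}(p)/H,1)$, then Coates--Wiles for rank zero (and Rubin for full BSD), then the Selmer lower bound of Theorem~\ref{thmm} to force large $\Sha[2]$. The paper's proof simply cites the main theorem of \cite{Y} (Yang) for the non-vanishing under the hypothesis $p>4d^2\lg|d|$ rather than sketching the analytic estimate as you do, and it leaves the appeal to Rubin's theorem implicit.
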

\begin{proof}

Under the assumptions on $d$, we have $L(E^{(d)}(p)/H,1)\neq0$ by the main theorem of \cite{Y}. So by the Coates-Wiles theorem, we know that $E^{(d)}(p)(H)=E^{(d)}(p)[2]$, and the assertions follows immediately from Theorem~\ref{thmm}.
\end{proof}

\begin{lem}\label{Lem11}
(i)If $q\equiv3\pmod4$ and splits in $K$, $q=f\cdot\bar{f}$ with $f$ as in Lemma~\ref{Lem10}, then $(\frac{f}{\pi})(\frac{\pi}{f})=1$, $(\frac{\bar{f}}{\pi})(\frac{\pi}{\bar{f}})=-1$ and $(\frac{f}{\pi})=(\frac{\bar{f}}{f})$;

(ii)If $q\equiv1\pmod4$ and splits in $K$, $q=f\cdot\bar{f}$ with $f$ as in Lemma~\ref{Lem10}, then $(\frac{f}{\pi})(\frac{\pi}{f})=1$, $(\frac{\bar{f}}{\pi})(\frac{\pi}{\bar{f}})=1$ and $(\frac{f}{\pi})=(\frac{\bar{f}}{f})$.
\end{lem}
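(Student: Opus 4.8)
The plan is to exploit quadratic reciprocity in $K = \BQ(\sqrt{-p})$ together with the explicit congruence conditions on $f$ provided by Lemma~\ref{Lem10}. Recall from that lemma that we may normalize $f = a + b\frac{1+\pi}{2}$ with $a \equiv 1 \pmod 4$, and that $v_2(b) = 1$ when $q \equiv 3 \pmod 4$ (part (i)) while $v_2(b) \geq 2$ when $q \equiv 1 \pmod 4$ (part (ii)). The key point is that these conditions control the behavior of $f$ and $\bar f$ at the ramified prime $\omega$ (equivalently $\pi = \sqrt{-p}$), and it is precisely the parity of $b$ modulo $4$ that distinguishes the two cases. First I would establish the reciprocity law in $K$ relating $(\frac{f}{\pi})$ and $(\frac{\pi}{f})$; since $K$ has class number $h$ odd and the relevant primes are coprime, the product $(\frac{f}{\pi})(\frac{\pi}{f})$ is governed by a local Hilbert symbol computation at the primes above $2$ and at $\omega$.

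For the first identities in each part, I would compute $(\frac{f}{\pi})(\frac{\pi}{f})$ via the product formula for Hilbert symbols: $\prod_v (f, \pi)_v = 1$, so the product of the two Legendre-type symbols equals the product of the local symbols at the remaining places, namely those dividing $2$ and the archimedean place. Using $f \equiv 1 \pmod{2\omega}$ (which follows from $a \equiv 1 \pmod 4$ and the congruence on $b$, exactly as in the proof of Lemma~\ref{Lem10}), the local symbol at $\omega$ is trivial, and the contribution at $\bar\omega$ depends on $v_2(b)$. This is where the dichotomy $v_2(b)=1$ versus $v_2(b)\geq 2$ enters: in case (i) it forces $(\frac{\bar f}{\pi})(\frac{\pi}{\bar f}) = -1$, whereas in case (ii) the higher $2$-adic valuation of $b$ makes $\bar f \equiv 1 \pmod{2\bar\omega}$ as well, yielding $+1$. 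The identity $(\frac{f}{\pi})(\frac{\pi}{f}) = 1$ should hold in both cases because $f \equiv 1 \pmod{2\omega}$ regardless of the parity of $q$.

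For the final identity $(\frac{f}{\pi}) = (\frac{\bar f}{f})$ in each part, I would use that $q = f\bar f$ together with the fact that $(\frac{q}{f}) = 1$ trivially (as $q \equiv \pm 1 \pmod 4$ and $f \mid q$ splits), so $(\frac{\bar f}{f}) = (\frac{q}{f})(\frac{\bar f}{f}) = (\frac{f\bar f}{f})\cdot(\text{correction})$; more cleanly, one relates $(\frac{\bar f}{f})$ to $(\frac{-p}{f})$ via $\bar f \equiv -f \cdot (\text{unit}) \pmod{\text{something}}$ using the minimal polynomial $f\bar f = q$ and $f + \bar f = 2a + b \in \BZ$. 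Applying reciprocity in $K$ once more to rewrite $(\frac{\bar f}{f})$ in terms of a symbol at $\pi$, and comparing with $(\frac{f}{\pi})$ through the explicit congruences, should close the loop.

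The main obstacle I anticipate is the careful bookkeeping of the local Hilbert symbol at the prime $\bar\omega$ above $2$, since $2$ is ramified-or-split in $K$ (here $(2) = \omega\bar\omega$) and the quadratic symbol at a dyadic place is notoriously delicate, requiring the full congruence $\bmod\ 8$ rather than $\bmod\ 4$. Getting the sign right in case (i)—i.e.\ verifying that $v_2(b) = 1$ genuinely produces $(\frac{\bar f}{\pi})(\frac{\pi}{\bar f}) = -1$ and not $+1$—will demand a precise analysis of $\bar f = a + b\frac{1-\pi}{2}$ modulo $2\bar\omega$, expanding $\frac{\pi}{\bar f}$ as a unit times a power of $\bar\omega$ and evaluating the residue symbol explicitly. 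I expect the computations for $(\frac{f}{\pi}) = (\frac{\bar f}{f})$ to follow formally once the dyadic symbols are pinned down, so the entire difficulty concentrates in the $2$-adic local analysis.
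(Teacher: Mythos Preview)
Your plan for the first two identities in each part is essentially the paper's: express $(\frac{f}{\pi})(\frac{\pi}{f})$ as a product of local Hilbert symbols at the primes $\omega,\bar\omega$ above $2$, and evaluate these using the congruences from Lemma~\ref{Lem10}. One small correction: the sign flip in case~(i) actually arises at $\omega$, not $\bar\omega$. The paper observes that $\pi=1-2\cdot\frac{1-\pi}{2}\equiv 1\pmod{\bar\omega^2}$, so the symbol at $\bar\omega$ is always trivial; what changes between $f$ and $\bar f$ is the residue modulo $\omega^2$, where $f\equiv 1$ but $\bar f\equiv -1\equiv\pi$, yielding $(\bar f,\pi)_\omega=-1$. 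In case~(ii) the stronger condition $v_2(b)\geq 2$ forces $\bar f\equiv 1\pmod{\omega^2}$ as well. So the dyadic bookkeeping is cleaner than you fear: one needs only congruences modulo $\omega^2$ and $\bar\omega^2$, not modulo $8$.

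Where your proposal has a genuine gap is the third identity $(\frac{f}{\pi})=(\frac{\bar f}{f})$. Your sketch (``relate $(\frac{\bar f}{f})$ to $(\frac{-p}{f})$'' and ``apply reciprocity in $K$ once more'') does not lead anywhere concrete, and the expectation that it ``follows formally once the dyadic symbols are pinned down'' is not borne out. The paper's argument is different and more direct: since $(\frac{f}{\pi})=(\frac{\pi}{f})$ by the first identity, it suffices to show $(\frac{\pi\bar f}{f})=1$. Working in the residue ring $\CO_K/(f)\cong\BZ/q^h\BZ$, one computes $\pi\bar f\equiv -b$, reducing the problem to the \emph{rational} Jacobi symbol $(\frac{-b}{q})$. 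This is then evaluated case by case using $a^2+ab+b^2\frac{p+1}{4}=q^h$ and the congruences $a\equiv 1\pmod 4$, $v_2(b)\in\{1\}$ or $\{\geq 2\}$, together with a mod~$8$ analysis to control the factor $(\frac{2}{q})$. This reduction to a symbol in $\BZ$ is the step you are missing; without it, the identity does not fall out of the Hilbert-symbol framework alone.
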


\begin{proof}
(i)Write $f=a+b\frac{1+\pi}{2}$, then $a\equiv1\mod4$ and $v_2(b)=1$ as in Lemma~\ref{Lem10}.

By \cite{N}, P415, Theorem(8.3), we have $(\frac{f}{\pi})(\frac{\pi}{f})=(\frac{f,\pi}{\omega})(\frac{f,\pi}{\bar{\omega}})$. But as $f\equiv1\pmod{\omega^2}$ and $\pi=1-2\frac{1-\pi}{2}\equiv1\pmod{\bar{\omega}^2}$, by \cite{S}, Chapter3, Theorem1, we deduce that $(\frac{f,\pi}{\omega})=(\frac{f,\pi}{\bar{\omega}})=1$, hence $(\frac{f}{\pi})(\frac{\pi}{f})=1$.

Because both $\bar{f}$ and $\pi$ are congruent to $-1$ modulo $\omega^2$, and $\pi\equiv1\pmod{\bar{\omega}^2}$, we have $(\frac{\bar{f},\pi}{\omega})=-1$ and $(\frac{\bar{f},\pi}{\bar{\omega}})=1$, hence $(\frac{\bar{f}}{\pi})(\frac{\pi}{\bar{f}})=-1$.

Now we show $(\frac{f}{\pi})=(\frac{\bar{f}}{f})$. As $(\frac{f}{\pi})(\frac{\pi}{f})=1$, we only need to show that $(\frac{\pi\bar{f}}{f})=(\frac{-b}{q})=1$. Since $v_2(b)=1$, we have $(\frac{-b}{q})=(\frac{2}{q})(\frac{-b/2}{q})=(\frac{2}{q})(\frac{q}{(-b/2)})(\frac{2}{q})$.

Because $a^2+ab+b^2\frac{p+1}{4}=q^h$ and $h$ is odd, we have $(\frac{q}{(-b/2)})=1$.

Because $2\mid b$, we have $a^2+ab\equiv1+ab\equiv q^h\pmod8$. Then if $q\equiv3\pmod8$, we have $b\equiv2\pmod8$; if $q\equiv7\pmod8$, we have $b\equiv6\pmod8$, so that $(\frac{2}{q})(\frac{2}{q})=1$ always holds. This finishes the proof of (i).

(ii) The proof of the first two assertions are similar to the proof in (i), and we show $(\frac{f}{\pi})=(\frac{\bar{f}}{f})$, or equivalently, $(\frac{-b}{q})=1$.

Assume $|-b|=2^ec$ with $c$ odd, then $(\frac{-b}{q})=(\frac{2}{q})^e(\frac{q}{c})$.

Because $a^2+ab+b^2\frac{p+1}{4}=q^h$ and $h$ is odd, we have $(\frac{q}{c})=1$.

If $q\equiv1\pmod8$, then $(\frac{-b}{q})=(\frac{2}{q})^e=1$. If $q\equiv5\pmod8$, then $1+ab\equiv5\pmod8$, hence $b\equiv4\pmod8$, i.e. $e=2$. So we also have $(\frac{-b}{q})=1$.
\end{proof}

\begin{prop}\label{prop1}
If $q\equiv3\pmod4$ splits in $K$, then
 \[\rank_{\mathcal{O}/2\mathcal{O}}S^{(q^*)}=3\]
\end{prop}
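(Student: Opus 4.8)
The plan is to invoke Theorem~\ref{main thm'} and reduce the claim to a purely combinatorial computation on the graph $G_{q^*}$. Since $q\equiv 3\pmod 4$ we have $q^*=-q$, so in the notation preceding Theorem~\ref{main thm'} the integer $d=q^*$ corresponds to $n=1$, $m=l=0$; thus $G_{q^*}$ has exactly the three vertices $\{-\pi,f,-\bar f\}$, where $q^h=f\bar f$ with $f$ as in Lemma~\ref{Lem10}. By Theorem~\ref{main thm'} one has $\rank_{\mathcal{O}/2\mathcal{O}}S^{(q^*)}=1+2t$, with $t$ the number of (nontrivial) even partitions of $G_{q^*}$, so it suffices to prove $t=1$.

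First I would determine all six potential arrows of $G_{q^*}$. The key inputs are Lemma~\ref{Lem11}(i), which gives $(\frac{f}{\pi})=(\frac{\pi}{f})$, $(\frac{\bar f}{\pi})(\frac{\pi}{\bar f})=-1$ and $(\frac{f}{\pi})=(\frac{\bar f}{f})$, together with the remark that, because $p\equiv 3\pmod 4$ and $q\equiv 3\pmod 4$, the element $-1$ is a nonsquare in each of the residue fields at $\pi$, $f$ and $\bar f$, so that $(\frac{-1}{\pi})=(\frac{-1}{f})=(\frac{-1}{\bar f})=-1$. These let me rewrite every symbol $(\frac{x}{y})$ with $x,y\in\{-\pi,f,-\bar f\}$ in terms of the single sign $s:=(\frac{f}{\pi})$. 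I would also exploit complex conjugation, which swaps $f\leftrightarrow\bar f$ and $\pi\leftrightarrow-\pi$ while preserving the residue symbol, to get the missing relations $(\frac{\pi}{\bar f})=-s$ and $(\frac{f}{\bar f})=(\frac{\bar f}{f})=s$.

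Carrying this out, I expect to find that, whatever the value of $s$, there is exactly one arrow between each pair of vertices and that the resulting tournament is acyclic. Writing $A=-\pi$, $B=f$, $C=-\bar f$, for $s=1$ the arrows come out as $C\to A$, $A\to B$, $C\to B$ (so $C>A>B$), while for $s=-1$ they come out as $B\to A$, $A\to C$, $B\to C$ (so $B>A>C$). In either case $G_{q^*}$ is a transitive tournament on three vertices, that is, a total order with a unique source and a unique sink.

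It then remains to count the even partitions, where a partition is even when every vertex receives an even number of arrows from the opposite part, as in the proof of Theorem~\ref{main thm'}. For a transitive tournament on three vertices the only nontrivial such partition is the one isolating the sink: the singleton cutting off the source or the middle vertex always leaves some vertex with a single incoming cross-arrow, whereas isolating the sink gives it two incoming cross-arrows and leaves the other two vertices with none. Hence $t=1$ and $\rank_{\mathcal{O}/2\mathcal{O}}S^{(q^*)}=1+2\cdot 1=3$. The main obstacle is the second step: pinning down all six arrows, and in particular the edge between $f$ and $\bar f$, for which the relation $(\frac{f}{\pi})=(\frac{\bar f}{f})$ of Lemma~\ref{Lem11} and the conjugation symmetry must be combined correctly; once $G_{q^*}$ is shown to be a transitive tournament the partition count is immediate.
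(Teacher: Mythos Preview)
Your proof is correct. The paper takes a slightly different but equivalent route: rather than invoking the graph-theoretic Theorem~\ref{main thm'}, it appeals directly to Theorem~\ref{main thm} and Lemma~\ref{Lem11} to name explicit generators of $S^{(q^*)}$ in each case---namely $(f,1)$ and $(1,\bar f)$ when $(\frac{f}{\pi})=1$, and $(-\bar f,1)$ and $(1,-f)$ when $(\frac{f}{\pi})=-1$. Both arguments rest on the same case split on $s=(\frac{f}{\pi})$ and the same reciprocity inputs from Lemma~\ref{Lem11}; your use of complex conjugation to determine $(\frac{\bar f}{\pi})$ and $(\frac{f}{\bar f})=(\frac{\bar f}{f})$ is exactly what is implicitly needed in the paper's verification as well. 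The graphical packaging you use makes the case analysis more uniform and the count $t=1$ transparent, while the paper's version has the advantage of exhibiting the nontrivial Selmer classes explicitly.
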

\begin{proof}
Notations as above.

If $(\frac{f}{\pi})=1$, then it's easy to verify by the above Lemma and Theorem~\ref{main thm} that $(f,1)$ and $(1,\bar{f})$ generate $S^{(q^*)}$. On the other hand, if $(\frac{f}{\pi})=-1$, then is is generated by $(-\bar{f},1)$ and $(1,-f)$.
\end{proof}

\begin{prop}
Suppose $d=\prod_{j=1}^mq'_j$ with $q'_j\equiv1\pmod4$ split in $K$, and $g_j$ as in Lemma~\ref{Lem10}. If $(\frac{g_j}{\pi})=-1$ for any $j$ and $(\frac{g_k}{g_j})=(\frac{\bar{g_k}}{g_j})=-1$, then
 \[E^{(d)}(p)(H)=E^{(d)}(p)[2]\ and\ (\Sha(E^{(d)}(p)/H)[2])^{\Gal(H/K)}={1}\].
\end{prop}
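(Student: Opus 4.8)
The plan is to follow the template of Theorem~\ref{thmm}, substituting the split-prime reciprocity of Lemma~\ref{Lem11} for the inert-prime input, and to reduce everything to the single assertion $\rank_{\mathcal{O}/2\mathcal{O}}S^{(d)}=1$. Since $d=\prod_j q'_j$ with each $q'_j\equiv1\pmod4$ we have $d\equiv1\pmod4$, so Theorem~\ref{main thm'} applies: $\rank_{\mathcal{O}/2\mathcal{O}}S^{(d)}=1+2t$ with $t$ the number of even partitions of $G_d$, and it suffices to prove $G_d$ is an odd graph. In the present situation there are no primes $f_i$ and no inert primes $Q_k$, so the vertex set of $G_d$ is exactly $\{-\pi\}\cup\{g_j,\bar{g_j}\}_{1\le j\le m}$, of odd cardinality $2m+1$.

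First I would compute every arrow of $G_d$, i.e. the sign $(\frac{x}{y})$ for each ordered pair of distinct vertices, and show it is always $-1$, so that $G_d$ is the complete bidirected graph on $2m+1$ vertices. The hypotheses give $(\frac{g_k}{g_j})=(\frac{\bar{g_k}}{g_j})=-1$ for $k\neq j$ directly. Writing $c$ for the nontrivial element of $\Gal(K/\BQ)$ (complex conjugation), one has $(\frac{x}{y})=(\frac{c(x)}{c(y)})$, and $c$ sends $g_j\mapsto\bar{g_j}$ and $\pi\mapsto-\pi$; applying $c$ converts the hypotheses into $(\frac{\bar{g_k}}{\bar{g_j}})=(\frac{g_k}{\bar{g_j}})=-1$ for $k\neq j$. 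The diagonal pairs come from Lemma~\ref{Lem11}(ii): $(\frac{g_j}{\pi})=(\frac{\bar{g_j}}{g_j})$ together with $(\frac{g_j}{\pi})=-1$ gives $(\frac{\bar{g_j}}{g_j})=-1$, and $c$ then gives $(\frac{g_j}{\bar{g_j}})=-1$. For the arrows at $-\pi$ I would use that $q'_j\equiv1\pmod4$ forces $(\frac{-1}{g_j})=(\frac{-1}{\bar{g_j}})=1$ (the residue field is $\BF_{q'_j}$ with $q'_j\equiv1\pmod4$) and that the lower symbol only sees the ramified ideal $(\pi)=(-\pi)$, so $(\frac{x}{-\pi})=(\frac{x}{\pi})$; combining with the relations $(\frac{g_j}{\pi})(\frac{\pi}{g_j})=1$ and $(\frac{\bar{g_j}}{\pi})(\frac{\pi}{\bar{g_j}})=1$ from Lemma~\ref{Lem11}(ii) yields $(\frac{-\pi}{g_j})=(\frac{g_j}{-\pi})=(\frac{-\pi}{\bar{g_j}})=(\frac{\bar{g_j}}{-\pi})=-1$. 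This exhausts all ordered pairs of distinct vertices.

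Next I would observe that a complete bidirected graph on an odd number of vertices admits no nontrivial even partition. Indeed every vertex receives an arrow from each of the others, so for a partition $W=V\sqcup V^{c}$ the number of crossing in-arrows at a vertex of $V$ equals $|V^{c}|$ and at a vertex of $V^{c}$ equals $|V|$; an even partition with both parts nonempty would force $|V|$ and $|V^{c}|$ both even, contradicting $|V|+|V^{c}|=2m+1$. Hence $G_d$ is odd, so $t=0$ and $\rank_{\mathcal{O}/2\mathcal{O}}S^{(d)}=1$. Because $\mathcal{O}/2\mathcal{O}$ has four elements ($2$ splits in $K$), Lemma~\ref{Lem7} exhibits $\delta(E^{(d)}(p)[2])$ as a rank-one $\mathcal{O}/2\mathcal{O}$-submodule of $S^{(d)}$, and equality of ranks forces $S^{(d)}=\delta(E^{(d)}(p)[2])$.

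Finally I would conclude as in Theorem~\ref{thmm}. By Proposition~\ref{prop2} we get $n(d)+1\le\rank_{\mathcal{O}/2\mathcal{O}}S^{(d)}=1$, so $n(d)=0$ and $E^{(d)}(p)(H)$ has $\mathcal{O}$-rank zero; together with $E^{(d)}(p)[2]\subseteq E^{(d)}(p)(H)$ and the isomorphism $E^{(d)}(p)(H)/2E^{(d)}(p)(H)\cong\mathcal{O}/2\mathcal{O}$ of Proposition~\ref{prop2}, this gives $E^{(d)}(p)(H)=E^{(d)}(p)[2]$. For the second assertion, taking $\Gal(H/K)$-invariants of the exact sequence $0\to E^{(d)}(p)(H)/2E^{(d)}(p)(H)\to \Sel_2(E^{(d)}(p)/H)\to\Sha(E^{(d)}(p)/H)[2]\to0$ of $\Gal(H/K)$-modules keeps it short exact, since $|\Gal(H/K)|=h_K$ is odd while all terms are $2$-groups; as $E^{(d)}(p)[2]$ is $\Gal(H/K)$-invariant its image already fills $\delta(E^{(d)}(p)[2])=S^{(d)}$, so the first invariant map is surjective and $(\Sha(E^{(d)}(p)/H)[2])^{\Gal(H/K)}=1$. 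The hard part will be the arrow computation of the second paragraph: one must track the unit factors $(\frac{-1}{\cdot})$ and the $\pi$ versus $-\pi$ conventions precisely, since it is exactly there that the hypotheses $(\frac{g_j}{\pi})=-1$ and $q'_j\equiv1\pmod4$ enter to make $G_d$ complete.
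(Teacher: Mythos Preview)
Your proposal is correct and follows essentially the same route as the paper: reduce via Theorem~\ref{main thm'} to showing that $G_d$ is the complete graph on the odd vertex set $\{-\pi,g_j,\bar g_j\}$, hence has no nontrivial even partition, so $\rank_{\CO/2\CO}S^{(d)}=1$. The paper's own argument is more compressed: it first observes that $g_j\equiv 1\pmod 4$ (Lemma~\ref{Lem10}) forces $(\frac{x}{y})=(\frac{y}{x})$ for all vertices $x,y$, so $G_d$ is unoriented, and then reads off completeness directly from the hypotheses; you instead verify each oriented edge individually using complex conjugation and Lemma~\ref{Lem11}(ii), which is more laborious but equally valid.
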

\begin{proof}
As $g_j-1\equiv1\pmod4$ and hence $\bar{g}_j-1\equiv1\pmod4$, we have $(\frac{x}{y})=(\frac{y}{x})$ for any $x,y\in G$, i.e. $G$ is an unoriented grapha. The hypothesis implies that there is an arrow between any two vertexes of $G$, and since there are odd number of vertexes, we know $G$ is an odd graph.
\end{proof}

\section{Heegner points on Eisenstein quotients}
\subsection{Eisenstein quotients}
In this section, let $X=X_0(N)/\BQ$ be the modular curve of level some positive integer $N$, $J=J_0(N)/\BQ$ be its Jacobian and $\BT=\BZ[\{T_\ell\}_\ell]\subseteq End(J/\BQ)$ be the (full) Hecke algebra of level $N$.

Recall that as  Riemann surfaces, we have $X(\BC)=\Gamma_0(N)\setminus \fH^*$, where $\fH^*=\fH\bigcup\BP^{1}(\BQ)$ and $\fH$ is the upper-half plane. The points of $S=\Gamma_0(N)\setminus\BP^{1}(\BQ)$ are called cusps of $X$ and are known to be rational over $\BQ(\mu_N)$. Take $i:X\rightarrow J$ to be the natural morphism which sending $x$ to $[x]=(x)-(\infty)$. This morphism is defined over $\BQ$ because $(\infty)$ is $\BQ$-rational, hence $i$ induces a homomorphism of $G_{\BQ}$-modules (also denoted by $i$) $i: Div^0(X)\rightarrow J(\bar{\BQ})$. We define the $cuspidal\ subgroup$ of $J$ to be the image of $Div^0(S)$ under $i$ and denote it by $C$. Then $C$ has a structure of $\BT[G_{\BQ}]$-module because the action of $\BT$ preserves cusps. More over, as we know that $C$ is a finite group, we can also view $C$ as a finite group scheme of $J$.

\begin{defn}
Supoose $P\in C$ is Hecke-eigen, that is to say the subgroup $\BZ\cdot P$ of $C$ is stable under $\BT$. Then we define $\BI(P)$ to be the ideal of $\BT$ annihilates $P$ and we shall call it the Eisenstein ideal corresponding to $P$. So if $P$ is of order $n$, then we we have an isomorphism $\BT/\BI(P)\simeq\BZ/n\BZ$ and we define $m_q=(q,\BI(P))$ for any prime divisor $q$ of $n$.

For any $q\mid n$, let $m_q=(q,\BI)$ . Define the Eisenstein quotient corresponding to $P$ to be
\[\widetilde{J}(P)=J/(\bigcap_{k\geq0}\BI^k)J\]
and the $q$-Eisenstein quotient corresponding to $P$ to be
\[\widetilde{J}^{(q)}(P)=J/(\bigcap_{k\geq0}{m_q}^k)J\]
for any $q\mid n$.
\end{defn}

Now assume $K$ to be an imaginary quadratic field in which all the prime divisor of $N$ splits (i.e. $(K,N)$ satisfies the Heegner hypothesis). Then for any integer $c$ prime to $N$, there exits an ideal $\fN_c$ in $\CO_c$ such that $\CO_c/\fN_c\cong \BZ/N\BZ$. Let
\[x_c=\BC/\CO_c\rightarrow\BC/\fN^{-1}_c\in X(H_c)\]
where $H_c$ is the ray class field of $K$. Hence we construct a point $[x_c]$ in $J(H_c)$. For any character $\chi:Gal(H_c/K)\rightarrow\BC^{\times}$, define
\[y_{\chi}=\sum_{\sigma\in Gal(H_c/K)}\chi^{-1}(\sigma)[x^{\sigma}_c]\]
in $J(H_c)\otimes\BC)^{\chi}$. When $c=1$ and $\chi$ is trivial, we denote the corresponding point by $y_K$. The finite dimensional vector space $J(H_c)\otimes\BC$ also admit a natural action by $\BT$, which commutes with the action of $G_Q$. For any homomorphism of algebras $f:\BT\rightarrow\BC$, define $y_{\chi,f}$ to be the projection of $y_{\chi}$ on $(J(H_c)\otimes\BC)^{\chi,f}$. The basic question is to determine whether these point $y_{\chi,f}$ is zero.

In the following subsection, we introduce a method (due to Gross) to test whether the projection of the Heegner points on the Eisenstein quotients are non-torsion.

\subsection{Eisenstein descent}
The canonical morphism $i$ induces an isomorphism $i^*:\hat{J}\simeq J$. Let $P$ be a cuspidal point of order $n$ defined over some field $M(\subseteq\BQ(\mu_N))$, and $D$ a cuspidal divisor representing $P$. Let $P'$ be the inverse image of $P$ under $i^*$.

Then $P'$ gives a morphism of $G_M$-modules $J[n]\rightarrow\mu_n$ via the Weil pairings. Combining this with the Kummer map $J(F)/nJ(F)\rightarrow H^1(F,J[n])$ where $F$ is some number field containing $M$, we get a map of $G_F$-modules
\[\delta(P):J(F)/nJ(F)\rightarrow F^{\times}\otimes\BZ/n\BZ\]
We call this the Eisenstein descent corresponding to $P$ over $F$.

\begin{prop}\label{modular unit}
$\delta(P)(\sum n_i(x_i))=\prod f^{n_i}(x_i)$, where $f$ is the modular unit such that $div(f)=nD$.
\end{prop}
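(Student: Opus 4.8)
The plan is to unwind the definition of $\delta(P)$ into its two constituent maps and then evaluate the resulting cohomology class explicitly by means of Weil reciprocity. Recall that $\delta(P)$ is the composite of the Kummer coboundary $J(F)/nJ(F)\hookrightarrow H^1(G_F,J[n])$ with the map $H^1(G_F,J[n])\to H^1(G_F,\mu_n)=F^\times\otimes\BZ/n\BZ$ induced by the homomorphism $w_{P'}\colon J[n]\to\mu_n$, $Q\mapsto e_n(Q,P')$, where $e_n$ is the Weil pairing and $P'=(i^*)^{-1}(P)$. The whole point is that $w_{P'}$ is computed by the modular unit $f$ with $\div(f)=nD$: this is the classical function-theoretic description of the Weil pairing on $\Pic^0(X)[n]$, namely that for $n$-torsion classes $[B],[C]$ with disjoint support and $\div(u)=nB$, $\div(v)=nC$, one has, for a suitable normalization, $e_n([B],[C])=v(B)/u(C)$ (with $v=f$, $C=D$ in the second slot).

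First I would fix a degree-zero divisor $A=\sum n_i(x_i)$ representing the given class, with support disjoint from $D$, and choose a divisor $A'$ over $\bar F$ together with a function $g$ with $\div(g)=nA'-A$, so that $[A']\in J(\bar F)$ is an $n$-division point of $[A]$. The Kummer cocycle is then $\sigma\mapsto[\sigma A'-A']$, and since $A$ is $F$-rational one has $\div(\sigma g/g)=n(\sigma A'-A')$. Feeding this into the pairing formula of the first paragraph gives $e_n(\sigma A'-A',P')=f(\sigma A'-A')/(\sigma g/g)(D)$.

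The key computation is to recognise this cocycle as a Kummer cocycle of the element $\prod f(x_i)^{n_i}=f(A)$. Set $\theta=f(A')/g(D)\in\bar F^\times$. Weil reciprocity $f(\div g)=g(\div f)$, applied to $f(A')^n=f(nA')=f(A)\,f(\div g)=f(A)\,g(D)^n$, shows $\theta^n=f(A)\in F^\times$, so $\theta$ is an $n$-th root of $f(A)$ and the associated Kummer class is $\sigma\mapsto\sigma\theta/\theta$. A direct manipulation, using that $f$ is $F$-rational (so $\sigma f(A')=f(\sigma A')$) and that $D$ is $F$-rational (so $\sigma(g(D))=(\sigma g)(D)$), identifies $\sigma\theta/\theta=f(\sigma A'-A')/(\sigma g/g)(D)$ with $e_n(\sigma A'-A',P')$. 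Under the Kummer isomorphism $H^1(G_F,\mu_n)\cong F^\times\otimes\BZ/n\BZ$ this yields $\delta(P)(\sum n_i(x_i))=f(A)=\prod f(x_i)^{n_i}$.

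Finally I would check well-definedness: replacing $A$ by $A+\div(h)$ with $h\in F(X)^\times$ changes $f(A)$ by $f(\div h)=h(\div f)=h(D)^n$, an $n$-th power, so the right-hand side depends only on the class in $J(F)/nJ(F)$, consistently with the left-hand side; the same reciprocity argument also lets one move $A$ off $\Supp(D)$. The main obstacle is the bookkeeping of conventions: the normalisation of the autoduality isomorphism $i^*$, the sign in the Weil pairing formula, and the direction of the Kummer isomorphism must be pinned down consistently so that the identity comes out as stated rather than with an inverse. The entire analytic engine is supplied by the function-theoretic Weil pairing formula together with Weil reciprocity, so once the normalisations are fixed the verification is a short manipulation.
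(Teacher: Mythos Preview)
Your argument is correct. You work entirely on the curve: you take the function-theoretic Weil pairing on $\Pic^0(X)[n]$ (the formula $e_n([B],[C])=v(B)/u(C)$ with $\div(u)=nB$, $\div(v)=nC$), feed the Kummer cocycle $\sigma\mapsto[\sigma A'-A']$ into it, and then use Weil reciprocity to recognise the resulting cocycle as the Kummer cocycle of $f(A)$. The well-definedness check via reciprocity is a clean finishing touch, and your explicit warning about normalisation of $i^*$ and of the pairing is exactly the right caveat.

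The paper takes a different route: it passes to the abelian variety $J$ and invokes Mumford's formula for the Weil pairing on $J[n]\times\hat J[n]$, namely $\langle Q,P'\rangle=G(x)/G(x+Q)$ where $[n]^*D'=\div(G)$ for a divisor $D'$ on $J$ representing $P'\in\hat J[n]$. The paper's proof as written is only a sketch---it sets up Mumford's formula and stops, leaving implicit the pullback of $G$ along $i\colon X\hookrightarrow J$ and its identification with $f$. Your curve-theoretic approach is more elementary and self-contained: everything happens with functions on $X$, Weil reciprocity replaces the translation identity on $J$, and nothing is left to the reader. The Mumford route has the advantage of being the ``correct'' conceptual picture (the pairing really lives on $J$), but your direct computation is shorter to write out in full and avoids the bookkeeping of transporting divisors between $X$ and $J$.
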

\begin{proof}
Let $D'$ be the divisor on $J$ representing $P'$. Then both $nD'$ and $[n]^*D'$ are principle, say $nD'=div(F)$ and $[n]^*D'=div(G)$. By \cite{Mumford},P184, Lemma, $<Q,P'>=G(x)/G(x+Q)$ for any $Q\in J[n]$.
\end{proof}

\begin{prop}
If $P$ is $\BT$-eigen and $T_{\ell}P=T^*_{\ell}P$ for any $\ell\mid N$, then $\delta(P)$ is a morphism of $\BT$-modules.
\end{prop}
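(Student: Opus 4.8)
The plan is to verify the identity $\delta(P)(T_\ell Q)=a_\ell\,\delta(P)(Q)$ for a single Hecke generator $T_\ell=T_{\ell,*}$ and each $Q\in J(F)/nJ(F)$, where $a_\ell\in\BZ/n\BZ$ is the eigenvalue determined by $T_\ell P=a_\ell P$. Since $\BZ\cdot P$ is $\BT$-stable, the assignment $T\mapsto a_T$ is a ring homomorphism $\BT\to\End(\BZ/n\BZ)=\BZ/n\BZ$ with kernel $\BI(P)$, so that $\BT/\BI(P)\cong\BZ/n\BZ$; this is exactly the $\BT$-action we place on the target $F^{\times}\otimes\BZ/n\BZ$ (via $\BT\twoheadrightarrow\BT/\BI(P)\cong\BZ/n\BZ$ acting by scalars). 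As every element of $\BT$ is a $\BZ$-polynomial in the $T_\ell$, checking the identity on each generator suffices.

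First I would unwind $\delta(P)$. Writing $\lambda=(i^{*})^{-1}\colon J\to\hat J$ for the canonical principal polarization, we have $P'=\lambda(P)\in\hat J(M)[n]$, and $\delta(P)$ is the composite of the Kummer map $\kappa\colon J(F)/nJ(F)\to H^1(F,J[n])$ with the map induced on cohomology by the $G_F$-homomorphism $J[n]\to\mu_n$, $x\mapsto e_n(x,P')$, where $e_n$ is the Weil pairing. The endomorphism $T_\ell\in\End(J)$ is defined over $\BQ$, hence Galois-equivariant, so naturality of the Kummer map gives $\kappa(T_\ell Q)=T_\ell\cdot\kappa(Q)$, where $T_\ell$ acts on $H^1(F,J[n])$ through its action on coefficients. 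Using the defining adjunction $e_n(T_\ell x,y)=e_n(x,\widehat{T_{\ell,*}}\,y)$ of the Weil pairing, I obtain, at the level of cocycles,
\[\delta(P)(T_\ell Q)=e_n\big(\kappa(Q),\,\widehat{T_{\ell,*}}\,P'\big),\]
so everything reduces to identifying $\widehat{T_{\ell,*}}\,P'$.

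The heart of the argument is the duality computation $\widehat{T_{\ell,*}}=\lambda\circ T_\ell^{*}\circ\lambda^{-1}$, i.e.\ that the Rosati involution attached to $\lambda$ sends $T_{\ell,*}=\beta_{\ell,*}\alpha_\ell^{*}$ to $T_\ell^{*}=\alpha_{\ell,*}\beta_\ell^{*}$. This follows from autoduality of Jacobians: under the canonical identifications the dual of a pullback is a pushforward and conversely, whence $\widehat{\beta_{\ell,*}\alpha_\ell^{*}}=\widehat{\alpha_\ell^{*}}\,\widehat{\beta_{\ell,*}}=\alpha_{\ell,*}\beta_\ell^{*}$. Granting this, $\widehat{T_{\ell,*}}\,P'=\widehat{T_{\ell,*}}\lambda(P)=\lambda(T_\ell^{*}P)$. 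Now the hypothesis enters: $T_\ell^{*}P=T_\ell P$ (automatic for $\ell\nmid N$ by Proposition~\ref{two kinds of Hecke operator}, and assumed for $\ell\mid N$), so $\widehat{T_{\ell,*}}\,P'=\lambda(a_\ell P)=a_\ell P'$. Feeding this back,
\[\delta(P)(T_\ell Q)=e_n(\kappa(Q),\,a_\ell P')=a_\ell\,e_n(\kappa(Q),P')=a_\ell\,\delta(P)(Q)=T_\ell\cdot\delta(P)(Q),\]
which is the asserted equivariance.

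I expect the main obstacle to be the duality identity $\widehat{T_{\ell,*}}=\lambda T_\ell^{*}\lambda^{-1}$, namely pinning down that the Weil-pairing adjoint of the pushforward operator is the pullback operator $T_\ell^{*}$ (rather than $T_{\ell,*}$ itself). This is exactly why the hypothesis $T_\ell P=T_\ell^{*}P$ is indispensable: the operator that naturally surfaces on the $\hat J$-side is $T_\ell^{*}$, and only the eigen-relation for $T_\ell^{*}$ lets one replace it by the scalar $a_\ell$. Some care is also needed to confirm that $x\mapsto e_n(x,P')$ is genuinely $G_F$-equivariant, which uses $P'\in\hat J(M)[n]$ with $M\subseteq F$, so that it descends to cohomology. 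Should the abstract duality prove awkward to justify cleanly, a concrete alternative is to argue directly from the explicit formula $\delta(P)(\sum n_i(x_i))=\prod f^{n_i}(x_i)$ of Proposition~\ref{modular unit}, tracking how the correspondence $T_\ell$ transforms the modular unit $f$; but the cohomological route above is the more transparent one.
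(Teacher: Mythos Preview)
Your argument is correct, but it follows a different path from the paper's. The paper works directly with the explicit formula $\delta(P)\bigl(\sum n_i(x_i)\bigr)=\prod f(x_i)^{n_i}$ from the preceding proposition: it computes $f(T_\ell z)$ as the norm $\alpha_{\ell,*}\beta_\ell^{*}(f)$, observes that this function has divisor $\alpha_{\ell,*}\beta_\ell^{*}(\div f)=T_\ell^{*}(nD)$, and then uses the eigen-relation $T_\ell^{*}D\equiv a_\ell D$ (in $J$) to conclude that $\alpha_{\ell,*}\beta_\ell^{*}(f)$ agrees with $f^{a_\ell}$ up to a constant and an $n$-th power, so that evaluation on a degree-zero divisor yields $a_\ell\cdot\delta(P)(Q)$ in $F^{\times}\otimes\BZ/n\BZ$. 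Thus in both approaches the same phenomenon surfaces---the operator that appears on the ``$P$-side'' is $T_\ell^{*}$, whence the hypothesis $T_\ell P=T_\ell^{*}P$ for $\ell\mid N$ is exactly what one needs---but the paper sees this at the level of divisors of modular units, while you see it via the Rosati involution on $\End(J)$.

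Your cohomological route is cleaner and more conceptual, and makes transparent why the Rosati adjoint of $T_{\ell,*}=\beta_{\ell,*}\alpha_\ell^{*}$ is $T_\ell^{*}=\alpha_{\ell,*}\beta_\ell^{*}$. The paper's route, by contrast, stays close to the concrete $\eta$-quotient computations that are actually carried out later when evaluating $\delta(P)$ on Heegner divisors, so it dovetails with those calculations without any translation. You anticipated exactly this alternative at the end of your proposal; in fact it is the one the paper adopts.
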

\begin{proof}
Let $\alpha_\ell,\ \beta_\ell:X_0(\ell p^2)\rightarrow X$ be the two morphisms sending $(E,C,D)$ ($C$ the $\ell$-part and $D$ the $p^2$-part) to  $(E,D)$ and $(E/C,(C+D)/D)$ respectively, then $T_\ell$ is by definition $\circ \beta_{\ell,*}\alpha^*_{\ell}$.

By the construction, $f(T_\ell z)=f(\ell z)\prod^{\ell-1}_{i=0}f(\frac{z+i}{\ell})=\alpha_{\ell,*}\circ \beta^*_\ell(f)$. As $div(\alpha_{\ell,*}\circ \beta^*_\ell(f))=\alpha_{\ell,*}\circ \beta^*_\ell[div(f)]=T^*_\ell(div(f))=(\ell+1)\cdot div(f)$, so we are done.
\end{proof}

When $\delta(P)$ is a homomorphism of $\BT$-modules, we can further localize it and define
\[\delta(P)_q:J(F)/nJ(F)\bigotimes\BT_{m_q}\rightarrow F^{\times}\otimes\BZ_q/n\BZ_q\]
for any $q\mid n$

\subsection{$\eta$-quotient}
In this subsection, we will consider the Eisenstein descent in the case that $P$ is given by a rational cuspidal divisor.

Let $\eta(z)$ be the Dedkind $\eta$-function and let $\eta_d(z):=\eta(dz)$ for any integer $d$. Let $N$ be a positive integer as before. For any family of integers $\mathrm{r}=(r_d)$ indexed by the positive divisors of $N$, define
\[g_{\mathrm{r}}=\prod_{d\mid N}\eta^{r_d}_d\]
we call any such function a Dedkind $\eta$-product. We have the following proposition
\begin{prop}\label{Dedkind}
$g_{\mathrm{r}}\in\BQ(X)$ if and only if the following four conditions are satisfied:

(1) $\sum_{d\mid N}r_d=0$;

(2) $\sum_{d\mid N}dr_d\equiv0\pmod{24}$;

(3) $\sum_{d\mid N}\frac{N}{d}r_d\equiv0\pmod{24}$;

(4) $\prod_{d\mid N}d^{r_d}\in \BQ^{\times2}$
\end{prop}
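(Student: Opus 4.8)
The plan is to characterize when the $\eta$-product $g_{\mathrm{r}}=\prod_{d\mid N}\eta^{r_d}_d$ defines a rational function on $X=X_0(N)$ by separately ensuring three things: that it is a well-defined meromorphic function on $X(\BC)$ (i.e., weight zero and invariant under $\Gamma_0(N)$), that its $q$-expansion has rational Fourier coefficients, and that it is fixed by the Galois action $\Gal(\ov\BQ/\BQ)$ which, by the theory of the canonical model, amounts to invariance under the diamond/Atkin--Lehner structure encoding the field of definition. I would organize the proof as: conditions (1)--(2) give weight-zero $\Gamma_0(N)$-invariance, condition (3) handles behavior under the Fricke involution (equivalently, integrality/compatibility at the cusps related to $N/d$), and condition (4) is exactly the constraint forcing rationality of the leading coefficients.

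First I would recall the transformation law of the Dedekind $\eta$-function. For $\gamma=\left(\begin{array}{cc} a & b \\ c & d \end{array}\right)\in SL_2(\BZ)$ one has $\eta(\gamma z)=\varepsilon(\gamma)\,(cz+d)^{1/2}\eta(z)$, where $\varepsilon(\gamma)$ is a $24$th root of unity given by the explicit Dedekind-sum formula. Applying this to each factor $\eta_d(z)=\eta(dz)$ and computing how $g_{\mathrm{r}}$ transforms under a general $\gamma\in\Gamma_0(N)$, the automorphy factors $(cz+d)^{r_d/2}$ multiply to $(cz+d)^{(\sum r_d)/2}$, so weight zero is equivalent to condition (1), $\sum_{d\mid N}r_d=0$. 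With weight zero secured, the residual multiplier is a root of unity depending on $\gamma$ through the Dedekind sums; the standard computation (going back to Ligozat and Newman) shows this multiplier is trivial for all $\gamma\in\Gamma_0(N)$ precisely when the two congruences $\sum_{d\mid N}d\,r_d\equiv0$ and $\sum_{d\mid N}(N/d)\,r_d\equiv0$ hold modulo $24$; these are conditions (2) and (3). I would cite or reproduce this multiplier computation rather than grind through it, since it is the technical heart but is entirely mechanical once the Dedekind-sum reciprocity is invoked.

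Having established that conditions (1)--(3) make $g_{\mathrm{r}}$ a genuine element of the function field $\BC(X)$, I would turn to rationality. The key point is that $g_{\mathrm{r}}$ is a modular \emph{unit}: its divisor is supported entirely on the cusps. Its $q$-expansion at the cusp $\infty$ is a rational power of $q$ times a power series with integer (in fact rational) coefficients, because each $\eta_d$ expands as $q^{d/24}\prod_{m\geq1}(1-q^{dm})$. Thus the Fourier coefficients of $g_{\mathrm{r}}$ at $\infty$ are automatically rational, and by the $q$-expansion principle, rationality of a modular function is detected by rationality of its $q$-expansion at a single cusp \emph{together with} Galois-invariance of the whole function. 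The Galois group $\Gal(\BQ(\mu_N)/\BQ)\cong(\BZ/N\BZ)^\times$ acts on $X_0(N)$ through the action on the cusps (which are rational over $\BQ(\mu_N)$), and this action permutes the cusps while scaling the leading coefficients of $g_{\mathrm{r}}$ by values of the form $\prod_d d^{r_d}$ raised to appropriate exponents. Tracking this, one finds that $g_{\mathrm{r}}$ is fixed by all of $\Gal(\BQ(\mu_N)/\BQ)$ if and only if $\prod_{d\mid N}d^{r_d}$ is a square in $\BQ^\times$, which is condition (4).

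The main obstacle I expect is the fourth condition: the first three are a self-contained multiplier computation, but condition (4) requires correctly identifying the Galois action on the leading coefficients at the cusps and showing that the product $\prod_d d^{r_d}$ is exactly the obstruction class in $\BQ^\times/\BQ^{\times2}$. This is delicate because it involves the precise constant in the $\eta$-transformation law (the $24$th roots of unity and the factors of $\sqrt{-i}$) and the explicit description of how $\Gal(\BQ(\mu_N)/\BQ)$ acts on $X_0(N)(\ov\BQ)$ via the moduli interpretation in Proposition~\ref{moduli}. I would handle it by reducing to the behavior at $\infty$ and $0$ (linked by the Fricke involution already controlled in conditions (2)--(3)) and invoking the known result that a weight-zero $\eta$-quotient satisfying (1)--(3) lies in $\BQ(X)$ exactly when its value group generator $\prod d^{r_d}$ becomes a square, appealing to Ligozat's criterion for the field of definition of $\eta$-products.
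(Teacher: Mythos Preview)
The paper does not prove this proposition; it simply cites Ligozat. So there is no ``paper's approach'' to compare against beyond noting that you are sketching the very argument the paper defers to.

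That said, your proposal misidentifies the role of condition~(4). You claim conditions (1)--(3) already put $g_{\mathrm{r}}$ into $\BC(X)$ and that (4) then handles the descent from $\BC(X)$ to $\BQ(X)$ via Galois-invariance of leading coefficients at the cusps. This is not how the argument runs. In fact conditions (1)--(3) alone do \emph{not} force $g_{\mathrm{r}}$ to be $\Gamma_0(N)$-invariant: the residual multiplier of a weight-zero $\eta$-quotient under $\gamma=\left(\begin{smallmatrix}a&b\\c&d\end{smallmatrix}\right)\in\Gamma_0(N)$ is the quadratic character $\left(\frac{s}{d}\right)$ with $s=\prod_{d\mid N}d^{r_d}$, and this character is trivial precisely when $s\in\BQ^{\times 2}$. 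So condition~(4) is part of the multiplier computation, on the same footing as (2) and (3); all four together are exactly what is needed for $g_{\mathrm{r}}\in\BC(X)$. Once that holds, $\BQ$-rationality is automatic and requires no separate Galois argument: the $q$-expansion of $g_{\mathrm{r}}$ at $\infty$ has integer coefficients, and the $q$-expansion principle for $X_0(N)$ over $\BQ$ finishes it.

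Your proposed mechanism for (4), namely tracking how $\Gal(\BQ(\mu_N)/\BQ)$ permutes cusps and scales leading coefficients, would not produce the square condition and would in any case be attacking a non-issue. If you revise, fold (4) into the multiplier analysis alongside (2)--(3), and then observe that rationality is free.
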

\begin{proof}
See \cite{Li}.
\end{proof}

As representatives of the cusps of $X$, we choose $\frac{x}{d}$ where $d$ is a positive divisor of $N$ and $(x,d)=1$ with $x$ taken modulo $(d,\frac{N}{d})$. We call such a cusp is of level $d$ and  it is defined over $\BQ(\mu_m)$ where $m=(d,\frac{N}{d})$. The cusps of level $d$ form an orbit under the action of $Gal(\BQ(\mu_m)/\BQ)$. Let $D_d$ be the rational divisor on $X$ defined as the sum of the cusps of level $d$ (each with multiplicity one) and $P_d$ the corresponding rational point on $J$, that is to say $P_d\in C(\BQ)$. We have the following proposition for the relation between the rational cuspidal divisor on $X$ and the Dedkind $\eta$-product.
\begin{prop}\label{eta quotient}
Let $D=\sum_{d\mid N}m_d\cdot D_d$ be a rational cuspidal divisor of degree $0$ on $X$ and $P$ the corresponding point in $J(\BQ)$, then there is a Dedkind $\eta$-product $g_{\mathrm{r}}\in \BQ(X)$ such that $nD=div(g_{\mathrm{r}})$ where $n$ is the order of $P$.
\end{prop}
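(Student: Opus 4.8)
The plan is to produce $g_{\mathrm{r}}$ by solving a linear system for the orders of the $\eta_d$ at the cusps, and then to verify that the solution satisfies the four conditions of Proposition~\ref{Dedkind} so that it lies in $\BQ(X)$. To begin, I would record that $nD$ is principal: since $i(D)=P$ has order $n$ in $J(\BQ)$ we have $nP=0$, so $nD=\div(f)$ for some $f\in\BQ(X)$. Here $f$ is a modular unit, its divisor being supported on the cusps, and it may be taken rational because $nD$ is $G_{\BQ}$-stable (Hilbert 90 lets one adjust the generating function by a constant). Thus the assertion is precisely that this principal rational cuspidal divisor is the divisor of a Dedekind $\eta$-product.

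Next I would make $\div(g_{\mathrm{r}})$ explicit. Ligozat's formula expresses the order of $\eta_e$ along a cusp of denominator $c\mid N$ as an explicit rational number $\lambda_{c,e}$ depending only on $c$ and $e$; in particular it is constant along each $\Gal$-orbit of cusps of a fixed level, so that $\div(g_{\mathrm{r}})=\sum_{d\mid N}a_d(\mathrm{r})\,D_d$ with $a_d(\mathrm{r})=\sum_{e\mid N}\lambda_{d,e}r_e$ a $\BQ$-linear form in the exponents $\mathrm{r}=(r_d)$. This defines a linear map $L\colon\mathrm{r}\mapsto\div(g_{\mathrm{r}})$ from exponent vectors to rational cuspidal divisors. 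The crucial input is the nonsingularity of the order matrix $(\lambda_{c,e})_{c,e\mid N}$, also due to Ligozat: once the degree-zero constraint $\sum_d r_d=0$ (condition~(1) of Proposition~\ref{Dedkind}, which is exactly the degree-zero condition on $\div(g_{\mathrm{r}})$) is imposed, $L$ becomes an isomorphism over $\BQ$ onto the space of degree-zero rational cuspidal divisors. Since $nD$ is such a divisor, there is a unique $\mathrm{r}\in\BQ^{\tau(N)}$ with $\sum_d r_d=0$ and $L(\mathrm{r})=nD$.

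It remains to check that $\mathrm{r}$ can be taken integral and meets conditions (2)--(4), so that $g_{\mathrm{r}}\in\BQ(X)$. This integrality is the main obstacle, since a priori the $\eta$-products realize only a full-rank \emph{sublattice} of the lattice of principal rational cuspidal divisors, whereas the statement in this generality needs the two lattices to coincide; the content one must invoke is Ligozat's theorem \cite{Li} that every modular unit on $X_0(N)$ is, up to a constant, a Dedekind $\eta$-product, which upgrades the rational solution to an integral one. Granting an integral $\mathrm{r}$, conditions (2) and (3) then follow automatically, for they assert exactly that the orders $\tfrac{1}{24}\sum_d d\,r_d$ and $\tfrac{1}{24}\sum_d (N/d)\,r_d$ of $g_{\mathrm{r}}$ at the cusps $\infty$ and $0$ are integers, which holds since these are coefficients of $nD$; and condition~(4), the rationality of the $\eta$-product, holds because $f$, and hence $g_{\mathrm{r}}=f/c$ for the (rational) leading coefficient $c$ of $f$ at the $\BQ$-rational cusp $\infty$, lies in $\BQ(X)$. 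This yields $nD=\div(g_{\mathrm{r}})$ with $g_{\mathrm{r}}\in\BQ(X)$, as required.
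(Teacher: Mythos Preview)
The paper's own ``proof'' of this proposition is simply the citation ``See \cite{Li}''; it defers entirely to Ligozat. Your sketch is a reasonable unpacking of the argument one finds there --- set up the linear system via Ligozat's order matrix, use its nonsingularity to solve over $\BQ$, and then invoke the main theorem that every modular unit on $X_0(N)$ is (up to a constant) a Dedekind $\eta$-product to obtain integrality --- so in substance you and the paper take the same route. One small caution: the step you flag as ``the main obstacle'' (upgrading the rational exponent vector to an integral one satisfying (2)--(4)) is not merely an obstacle but essentially the content of the proposition itself, so your argument, like the paper's, ultimately rests on \cite{Li} for the real work; the surrounding paragraphs are helpful context rather than an independent proof.
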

\begin{proof}
See \cite{Li}.
\end{proof}

Let $K$ be an imaginary quadratic field in which all the prime divisor of $N$ splits, then there is an ideal $\fN$ in $K$ such that $\CO_K/\fN\simeq\BZ/N\BZ$. Let $y_K\in J(H_K)$ be the Heegner point. For each $d\mid N$, we denote by $\fN_d$ the unique ideal such that $\fN_d\mid \fN$ and $\CO_K/\fN_d\simeq\BZ/d\BZ$. For any Dedkind $eta$-product $g_{\mathrm{r}}\in\BQ(X)$ for some $\mathrm{r}=(r_d)_{d\mid N}$, we have
\[\prod_{d\mid N}\fN^{r_d}_d=\fa_{\mathrm{r}}^{-2}\]
by the condition $(4)$ of Proposition~\ref{Dedkind}, for some rational ideal $\fa_{\mathrm{r}}$ in $K$.

\begin{thm}\label{Eisenstein descent for rational divisor}
Let $D=\sum_{d\mid N}m_d\cdot D_d$ be a rational cuspidal divisor of degree $0$ on $X$ and $P$ the corresponding point in $J(\BQ)$, $g_{\mathrm{r}}\in \BQ(X)$ is the Dedkind $\eta$-product such that $nD=div(g_{\mathrm{r}})$ where $n$ is the order of $P$, then
\[\delta(P)(y_K-\overline{y_K})\equiv \zeta\cdot\alpha_{\mathrm{r}}^{h_{\mathrm{r}}}\pmod{K^{\times n}}\]
where $\zeta$ is a root of unit in $K$, $\alpha_{\mathrm{r}}$ is a generator of the ideal $\frac{\fa_{\mathrm{r}}}{\overline{\fa_{\mathrm{r}}}}^{o(\fa_{\mathrm{r}})}$ as above and $h_{\mathrm{r}}=\frac{h_K}{o(\fa_{\mathrm{r}})}$.

In particular, suppose $\delta(P)$ is a homomorphism of $\BT$-modules, $q$ a prime divisor of $n$ with $(q,6)=1$ and $\alpha_{\mathrm{r}}^{h_{\mathrm{r}}}$ is not zero in $K^{\times}\bigotimes\BZ_q/n\BZ_q$ and  $J[m_q](K)^{-}=0$, then the projection of $y_K$ on the $q$-Eisenstein quotient corresponding to $P$ is non-torsion.
\end{thm}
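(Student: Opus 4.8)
The plan is to evaluate the left-hand side explicitly by pulling $\delta(P)$ back to the modular unit $g_{\mathrm{r}}$ and then invoking complex multiplication. First, since $y_K=\sum_{\sigma\in\Gal(H_K/K)}[x_1^{\sigma}]$ is the trace of the Heegner point, it lies in $J(K)$ and $\overline{y_K}=c(y_K)$ for $c$ the complex conjugation generating $\Gal(K/\BQ)$; thus $y_K-\overline{y_K}\in J(K)^{-}$ and, as a degree-zero divisor class, $y_K-\overline{y_K}=\sum_{\sigma}\big((x_1^{\sigma})-(\overline{x_1}^{\sigma})\big)$. Applying Proposition~\ref{modular unit} with $f=g_{\mathrm{r}}$ (so that $\mathrm{div}(g_{\mathrm{r}})=nD$) gives
\[\delta(P)(y_K-\overline{y_K})\equiv\prod_{\sigma\in\Gal(H_K/K)}\frac{g_{\mathrm{r}}(x_1^{\sigma})}{g_{\mathrm{r}}(\overline{x_1}^{\sigma})}\pmod{K^{\times n}},\]
so the whole problem reduces to computing this product of singular values.

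Next I would carry out the CM computation. Writing $g_{\mathrm{r}}=\prod_{d\mid N}\eta_d^{r_d}$ and using that $\eta(dz)^{24}$ is a singular value of $\Delta$ for the lattice attached to $\fN_d^{-1}$, each factor $g_{\mathrm{r}}(x_1^{\sigma})$ becomes a product of discriminant values; the homogeneity $\Delta(\lambda L)=\lambda^{-12}\Delta(L)$ together with Shimura's reciprocity law describes how $\sigma\in\Gal(H_K/K)\cong\Cl(K)$ permutes the relevant ideals. In the ratio $g_{\mathrm{r}}(x_1^{\sigma})/g_{\mathrm{r}}(\overline{x_1}^{\sigma})$ the transcendental period factors and the real contributions cancel, leaving an element of $K^{\times}$ whose associated ideal is governed by $\prod_{d\mid N}\fN_d^{r_d}$. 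By condition $(4)$ of Proposition~\ref{Dedkind} this product equals $\fa_{\mathrm{r}}^{-2}$, and accumulating the contributions over the $h_K$ elements $\sigma$ identifies the ideal generated by the whole product as $(\fa_{\mathrm{r}}/\overline{\fa_{\mathrm{r}}})^{h_K}$.

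I would then extract the generator. Because $\fa_{\mathrm{r}}$ is rational, the class of $\fa_{\mathrm{r}}/\overline{\fa_{\mathrm{r}}}$ has order $o(\fa_{\mathrm{r}})$ in $\Cl(K)$, so $(\fa_{\mathrm{r}}/\overline{\fa_{\mathrm{r}}})^{o(\fa_{\mathrm{r}})}=(\alpha_{\mathrm{r}})$ and hence $(\fa_{\mathrm{r}}/\overline{\fa_{\mathrm{r}}})^{h_K}=(\alpha_{\mathrm{r}}^{h_{\mathrm{r}}})$ with $h_{\mathrm{r}}=h_K/o(\fa_{\mathrm{r}})$. Since the only units of $\CO$ are roots of unity, the computed element equals $\zeta\cdot\alpha_{\mathrm{r}}^{h_{\mathrm{r}}}$ for some root of unity $\zeta$, which modulo $n$-th powers is exactly the first assertion.

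Finally, for the non-triviality statement I would localize. Assuming $\delta(P)$ is a homomorphism of $\BT$-modules, the map $\delta(P)_q$ is defined, and because $(q,6)=1$ the root of unity $\zeta$ dies in the $q$-part, so the hypothesis that $\alpha_{\mathrm{r}}^{h_{\mathrm{r}}}\neq0$ in $K^{\times}\otimes\BZ_q/n\BZ_q$ yields $\delta(P)_q(y_K-\overline{y_K})\neq0$. As $P$ is annihilated by $\BI(P)$, the descent map $\delta(P)$ factors through the Eisenstein quotient, so $\delta(P)_q$ detects the image of $y_K-\overline{y_K}$ in $\widetilde{J}^{(q)}(P)(K)/n\widetilde{J}^{(q)}(P)(K)$ and shows it is nonzero. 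The hypothesis $J[m_q](K)^{-}=0$ removes any nonzero $m_q$-torsion in the minus part, so this nonzero class is in fact non-torsion; since $y_K-\overline{y_K}=(1-c)y_K$, a torsion projection of $y_K$ would force a torsion projection of $y_K-\overline{y_K}$, whence the projection of $y_K$ to $\widetilde{J}^{(q)}(P)$ must be non-torsion. The main obstacle is the second step: making the singular-value computation precise, controlling the period and unit factors in the ratio and tracking the Galois permutation of the ideals via reciprocity so that exactly $(\fa_{\mathrm{r}}/\overline{\fa_{\mathrm{r}}})^{h_K}$ emerges.
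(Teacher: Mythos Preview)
Your proposal is correct and follows essentially the same approach as the paper: both evaluate $\delta(P)(y_K-\overline{y_K})$ via Proposition~\ref{modular unit} as a product of singular values of the $\eta$-quotient, rewrite these (after raising to the 24th power) as products of $\Delta$-ratios $\Delta(\fa)/\Delta(\fN_d\fa)$, invoke the CM fact that each such ratio generates the ideal $\fN_d^{12}$ in $H_K$ to identify the resulting ideal as a power of $\fa_{\mathrm{r}}/\overline{\fa_{\mathrm{r}}}$, and then argue non-torsion exactly as you do from $(q,6)=1$ and $J[m_q](K)^-=0$. Your outline is in fact more detailed than the paper's very terse proof; the only quibble is your phrase ``because $\fa_{\mathrm{r}}$ is rational'' --- this echoes an infelicity in the paper's wording, but $\fa_{\mathrm{r}}$ is a fractional ideal of $K$ (not generated by a rational integer), so the order statement should just be taken as the definition of $o(\fa_{\mathrm{r}})$.
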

\begin{proof}
From the definition of the Dedkind $\eta$-quotient and the condition $(1)$ of Proposition~\ref{Dedkind}, we have
\[g_{\mathrm{r}}(y_K-\overline{y_K})^{24}=\prod_{\fa}\prod_{d}\frac{\Delta(\fN_d\fa)}{\Delta(\fa)}^{r_d}\cdot\frac{\Delta(\overline{\fN_d\fa)}}{\Delta(\overline{\fa})}^{r_d}\]

We know that $\frac{\Delta(\fa)}{\Delta(\fN_d\fa)}$ is an integral number in $H_K$ which generates the ideal $\fN^{12}_d$, hence prove the first claim.

For the second claim, as $q$ is prime to $6$, we can ignore the root of unit above. Then $\delta(P)_q(y_K-\overline{y_K})$ is not zero, and so $y_K-y_K$ is not zero in $J(K)^{-}\bigotimes\BT_{m_q}$. Hence $\delta(P)_q(y_K-\overline{y_K})$ is either non-torsion or $m_q$-torsion. But we assume that $J(K)^{-}[m_q]=0$, so we get the conclusion.
\end{proof}

\section{Prime level case}
In this section, we let $N$ to be a prime $p$.

\subsection{Eisenstein quotients}

Recall that the cusps of $X$ are $[0]$ and $[\infty]$ which are all rational, so $C$ is a cyclic group generated by $[0]-[\infty]$. In particular, it must be Hecke-eigen.

Let $\BT=\BZ[\{T_\ell\}_\ell]\subseteq End(J_0($p$))$ be the Hecke algebra of level $p$, here $\ell$ runs through all rational primes. Note that by the theorem of Atkin-Lehner, we have $w_p=-T_p$ in this prime level case, so our $\BT$ is just the one used in \cite{Mz}. Let $\BI=(T_p-1,\{T_\ell-(1+\ell)\}_{\ell\neq p})=(w_p+1,\{T_\ell-(1+\ell)\}_{\ell\neq p})$ be an ideal in $\BT$, then $\BI$ annihilates $C$ and we call $\BI$ the $Eisenstein\ ideal\ of\ level\ p$.

\begin{prop}\label{Mazur1}
Notations as above and let $n=\frac{p-1}{(12,p-1)}$, then:

$(1)$ The order of $C$ is $n$ and $C=J(\BQ)_{tor}$;

$(2)$ $\BT$ acts transitively on $C$ with kernel $\BI$;

$(3)$ Under the natural morphism $J\rightarrow\widetilde{J}$, we have $C\simeq\widetilde{J}(\BQ)_{tor}$.
\end{prop}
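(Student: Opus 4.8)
The three assertions are due to Mazur \cite{Mz}; the plan is to separate the formal content from the two genuinely deep inputs and quote only the latter. Throughout, write $P=[(0)-(\infty)]$, the canonical generator of $C$.

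\textbf{Part (1).} Since $X_0(p)$ has exactly the two rational cusps $[0]$ and $[\infty]$, the group $C$ is cyclic, generated by $P$, so everything reduces to computing $\mathrm{ord}(P)$. For the upper bound I would exhibit the relevant Dedekind $\eta$-product $g=\left(\eta(z)/\eta(pz)\right)^{a}$: the criterion of Proposition~\ref{Dedkind} forces $a$ even with $24\mid a(p-1)$, and a $q$-expansion at the two cusps (of widths $1$ and $p$) gives $\div(g)=\frac{a(p-1)}{24}\,P$. The minimal admissible exponent is $a=24/\gcd(12,p-1)$, which yields $\div(g)=n\,P$ with $n=\frac{p-1}{\gcd(12,p-1)}$, so $\mathrm{ord}(P)\mid n$; that the order is exactly $n$ is Ogg's computation that no smaller multiple of $P$ is principal. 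The equality $C=J(\BQ)_{\tor}$ is the hard theorem of Mazur: the inclusion $C\subseteq J(\BQ)_{\tor}$ is clear, and the reverse rests on showing that the rational torsion is Eisenstein (annihilated by $\BI$), e.g.\ through reduction at a good prime and the analysis of the specialization of torsion, which I would simply quote.

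\textbf{Part (2).} Transitivity of the $\BT$-action on $C$ is immediate, because $C=\BZ\cdot P$ is cyclic and $\BZ\subseteq\BT$, so already $\BZ$ acts transitively. The kernel of the orbit map $T\mapsto T\cdot P$ is $\Ann_{\BT}(P)$, and since $P$ has order $n$ we get $\BT/\Ann_{\BT}(P)\cong\BZ/n\BZ$. As $\BI$ is generated by $T_p-1$ and the $T_\ell-(1+\ell)$, all of which kill $C$, we have $\BI\subseteq\Ann_{\BT}(P)$, and $\BT/\BI$ is cyclic over $\BZ$ (every $T_\ell$ is congruent to an integer mod $\BI$), giving a surjection $\BT/\BI\twoheadrightarrow\BZ/n\BZ$. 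Thus it remains only to prove $\BI=\Ann_{\BT}(P)$, equivalently that the index of $\BI$ in $\BT$ equals $n$; this is Mazur's index computation, which I would cite.

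\textbf{Part (3).} Set $B=\bigl(\bigcap_{k\ge0}\BI^{k}\bigr)J$, so that $\widetilde J=J/B$, and the goal is to show the composite $C\hookrightarrow J(\BQ)\to\widetilde J(\BQ)$ is an isomorphism onto $\widetilde J(\BQ)_{\tor}$. First I would check that $C$ injects into $\widetilde J$: an element of $C\cap B$ lies in $\BI^{k}J$ for every $k$, and using the identification $C\cong\BT/\BI$ from part (2) together with Mazur's structure of the Eisenstein quotient one deduces $C\cap B=0$. Next, $\widetilde J(\BQ)$ is finite — this is Mazur's theorem that the Eisenstein quotient has Mordell–Weil rank $0$ over $\BQ$ — so $\widetilde J(\BQ)=\widetilde J(\BQ)_{\tor}$; comparing orders, both equal to $n$ by part (1) and Mazur's computation of $\widetilde J(\BQ)$, forces the injection of $C$ to be surjective, hence an isomorphism.

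\textbf{Main obstacle.} The purely formal steps are the cyclicity in (1) and all of (2); the real substance lies entirely in the quoted theorems of Mazur, namely the identity $C=J(\BQ)_{\tor}$ (that rational torsion is Eisenstein), the rank-zero statement that $\widetilde J(\BQ)$ is finite, and the index formula $[\BT:\BI]=n$. I would treat these three as inputs from \cite{Mz} rather than attempt to reprove them, since each requires the full machinery of the Eisenstein ideal.
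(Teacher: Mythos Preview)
Your proposal is correct and, like the paper's own proof, defers every substantive step to Mazur \cite{Mz}; the paper simply writes ``See \cite{Mz}, Theorem 1.2 of Chapter 3 and Theorem 9.7 of Chapter 2'' with no further argument. Your version is more informative in that it separates the elementary pieces (cyclicity of $C$, the $\eta$-quotient upper bound on $\mathrm{ord}(P)$, the formal reduction in (2)) from the three deep inputs you correctly identify, but the underlying approach is the same citation to Mazur.
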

\begin{proof}
See \cite{Mz}, Theorem1.2 of Chapter3 and Theorem9.7 of Chapter2.
\end{proof}

So we have the Eisenstein quotient $\widetilde{J}$, and the $\widetilde{J}^{(q)}$ for each $q\mid n$

Let $f=(\frac{\Delta(z)}{\Delta(pz)})^{\frac{1}{m}}$ with $m=(p-1,12)$, then $div(f)=n((0)-(\infty))$. For any number field $F$, define
\[\delta_F:D'_0(F)\rightarrow F^{\times}\]
by the formula
\[\delta_F(\sum a_i(x_i))=\prod f(x_i)^{a_i}\]
where $D'_0(F)$ is the group of divisors of degree zero over $F$.

On principle divisors we find $\delta(div(g))=g(div(f))=(g(0)/g(\infty))^{n}$ by reciprocity. Hence $delta$ induces a homomorphism
\[\delta_F:J(F)\rightarrow F^{\times}\otimes\BZ/n\BZ\]
This map is called the Eisenstein descent (corresponding to $C$) over $F$.

Here is an explanation why call this a descent. Consider the Kummer map $J(F)/nJ(F)\rightarrow H^1(F,J[n])$. By the Weil pairing, the cuspidal point $[0]$ in $J$ of order $n$ gives a homomorphism of $G_\BQ$-modules $J[n]\rightarrow\mu_n$. Then the composition of these two maps is just the $\delta_F$ given above.

Viewing $\BZ/n\BZ$ as $\BT$-module by Proposition~\ref{Mazur1}, it is easy to check that $\delta$ is a homomorphism of $\BT$-modules. Then we have
\[\delta_{F,q}:J(F)\otimes\BT_{m_q}\rightarrow \BZ_q/n\BZ_q\]
for any $q\mid n$.

\subsection{Heegner points on $\widetilde{J}^{(q)}$ for odd $q$}

In this section, we summarize Gross' results.

\begin{prop}\label{Gross1}
Suppose $q\mid n$ and $(q,6)=1$. Let $K$ to be an imaginary quadratic field in which $p$ splits. If $v_p(h_K)<v_p(\frac{p-1}{(12,p-1)})$, then $y^{(q)}_K$ is of infinite order in $\widetilde{J}^{(q)}(K)$ for odd $q$ such that $(q,w_K)=1$, where $y^{(q)}_K$ to be the projection of $y_K$ to $\widetilde{J}^{(q)}$ .
\end{prop}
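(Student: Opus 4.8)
The plan is to realize this as a direct application of Theorem~\ref{Eisenstein descent for rational divisor} to the prime-level cuspidal point $P=[0]-[\infty]$, whose descent map is the $\delta_{K,q}$ constructed above. (I read the hypothesis as $v_q(h_K)<v_q(n)$ with $n=\frac{p-1}{(12,p-1)}$: since $n\mid p-1$ we have $v_p(n)=0$, so the inequality as literally written is vacuous, and the intended prime is $q$.) First I would record the inputs that the theorem needs. By Proposition~\ref{Mazur1} the group $C$ is cyclic generated by $P$, hence $\BT$-eigen, and the prime-level subsection already shows $\delta=\delta_K$ is a homomorphism of $\BT$-modules; after localizing at $m_q$ it therefore factors through the $q$-Eisenstein quotient, giving $\delta_{K,q}\colon \widetilde J^{(q)}(K)\to K^\times\otimes\BZ_q/n\BZ_q$. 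The modular unit is $f=(\Delta(z)/\Delta(pz))^{1/m}=\eta_1^{24/m}\eta_p^{-24/m}$ with $m=(12,p-1)$, so in the notation of Proposition~\ref{Dedkind} one has $r_1=24/m$ and $r_p=-24/m$.

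Next I would run the ideal computation feeding the formula of Theorem~\ref{Eisenstein descent for rational divisor}. Since $p$ splits, $\fN=\fp$ is prime with $\fp\overline\fp=(p)$, while $\fN_1=\CO_K$ and $\fN_p=\fp$; hence $\prod_{d\mid p}\fN_d^{r_d}=\fp^{-24/m}=\fa_{\mathrm r}^{-2}$, giving $\fa_{\mathrm r}=\fp^{12/m}$. The theorem then yields
\[
\delta_{K,q}(y_K-\overline{y_K})\equiv\zeta\cdot\alpha_{\mathrm r}^{h_{\mathrm r}}\pmod{K^{\times n}},
\]
where $\alpha_{\mathrm r}^{h_{\mathrm r}}$ generates $(\fa_{\mathrm r}/\overline{\fa_{\mathrm r}})^{h_K}=(\fp/\overline\fp)^{12h_K/m}$; in particular $v_\fp(\alpha_{\mathrm r}^{h_{\mathrm r}})=12h_K/m$. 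Because $(q,6)=1$ and $m\mid 12$, we have $q\nmid 12$ and $q\nmid m$, so $v_q(12h_K/m)=v_q(h_K)$, and since $(q,w_K)=1$ the root of unity $\zeta$ becomes trivial in $K^\times\otimes\BZ_q/n\BZ_q$. The element $\alpha_{\mathrm r}^{h_{\mathrm r}}$ is a $q^{v_q(n)}$th power in $K^\times$ only if $q^{v_q(n)}\mid 12h_K/m$, i.e. only if $v_q(n)\le v_q(h_K)$; the hypothesis $v_q(h_K)<v_q(n)$ rules this out, so $\alpha_{\mathrm r}^{h_{\mathrm r}}\neq0$ in $K^\times\otimes\BZ_q/n\BZ_q$ and therefore $\delta_{K,q}(y_K-\overline{y_K})\neq0$.

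Finally I would upgrade this non-vanishing to non-torsion using the second half of Theorem~\ref{Eisenstein descent for rational divisor}. Non-vanishing of $\delta_{K,q}$ forces $y_K-\overline{y_K}=(1-c)y_K$ (with $c\in\Gal(K/\BQ)$ complex conjugation) to be non-zero in $J(K)^-\otimes\BT_{m_q}$, so its image in $\widetilde J^{(q)}(K)^-$ is either non-torsion or $m_q$-primary torsion; to exclude the latter I must verify $J[m_q](K)^-=0$. By Mazur's description of $J_0(p)[m_q]$ as an extension of the cuspidal line $\BZ/q$ (which is $G_\BQ$-trivial, hence of $+$-type over $K$) by the Shimura line $\mu_q$, and since $q>3$ forces $\mu_q(K)=\{1\}$, the minus part of the $m_q$-torsion over $K$ indeed vanishes. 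Consequently $(1-c)y_K^{(q)}$, and hence $y_K^{(q)}$ itself, has infinite order in $\widetilde J^{(q)}(K)$.

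I expect the main obstacle to be precisely this last step: making rigorous the passage from ``$\delta_{K,q}$ is non-zero on $y_K-\overline{y_K}$'', a statement only modulo $n$th powers, to ``$y_K^{(q)}$ is non-torsion''. This rests on the vanishing of the minus part of $J_0(p)[m_q](K)$, and so on Mazur's structural analysis of the Eisenstein-ideal torsion rather than on the self-contained descent computation; the valuation estimate of the previous paragraph is the engine, but identifying and justifying the correct torsion hypothesis in the prime-level setting is where the genuine content lies.
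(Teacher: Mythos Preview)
Your proof is correct and follows essentially the same route as the paper: compute $\delta_{K,q}(y_K-\overline{y_K})$ via the modular unit $(\Delta(z)/\Delta(pz))^{1/m}$, show it is non-trivial in $K^\times\otimes\BZ_q/n\BZ_q$ under the hypothesis $v_q(h_K)<v_q(n)$, and then use Mazur's description $J[m_q]\simeq\BZ/q\BZ\oplus\mu_q$ to kill the $m_q$-torsion in $J(K)^-$. Your reading of the typo ($v_q$, not $v_p$) is right, and your valuation argument at $\fp$ is a slightly cleaner repackaging of the paper's observation that the generator $\alpha$ of $(\fp/\overline{\fp})^{o(\fp)}$ is not a $q$-th power; the paper carries out the $\Delta$-computation directly rather than invoking Theorem~\ref{Eisenstein descent for rational divisor}, but the content is identical.
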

\begin{proof}
First we show that $\delta_K(y_K-\bar{y_K})\neq0$.

By definition, we have
\[\delta_{K,q}(y_K-\bar{y_K})=(\prod_{\fa\in Cl(\CO_K)}\frac{\Delta(\fa)}{\Delta(\fp\fa)}\frac{\Delta(\overline{\fp\fa)}}{\Delta(\overline{\fa})})^{\frac{1}{m}}\]
where $\fp$ is a prime ideal in $K$ over $p$.
As for any ideal $\fb\in Cl(\mathcal{O}_K)$, $\frac{\Delta(\fp\fa)}{\Delta(\fa)}$ is a number in $H_K$ generates $\fp^{-12}$, we find that \[\delta_{K,q}(y_K-\bar{y_K})=u\cdot\alpha^{\frac{12h}{m}}\pmod K^{\times n}\]
with $u\in \mathcal{O}^{\times}_K$ and $\alpha\in K^{\times}$ such that $(\alpha)=(\fp/\bar{\fp})^{o(\fp)}$.

Note that $\alpha$ is not a $q$-th power in $K$. This is because if $x^q=\alpha$, then $(x)=(\fp/\bar{\fp})^{o(\fp)/q}$. As $\bar{\fp}$ is the converse of $\fp$ in the ideal class group, we find that $o(\fp)|\frac{2 o(\fp)}{q}$, which is impossible as $q$ is odd.

So when $ord_q(h)<ord_q(n)$, we will have $\delta_{K,q}(y_K-\bar{y_K})\neq 0$, hence $y_k\neq 0$ in $J(K)^{-}\bigotimes\mathbb{T}_{m_q}$.

Mazur shows that $J[m_q]=\BZ/q\BZ\oplus\mu_q$, so that $J(K)^{-}[m_q]=0$, so all points in $J(K)^{-}\bigotimes\mathbb{T}_{m_q}$ is not torsion. This completes the proof.

\end{proof}

\subsection{Heegner points on $\widetilde{J}^{(2)}$}
\begin{example}\label{Example1}
Suppose $p$ is of the form $u^2+64$ for some (odd) integer $u$. Note that $2\mid n$ in this situation, so that the $2$-Eisenstein quotient $\widetilde{J}^{(2)}$ exists.

On the other hand, it is known that when $p$ is of the above form, there is a unique isogeny class of elliptic curves of conductor $p$ such that each curve in it has a point of order $2$ (\cite{NS}). These curves are called Neumann-Setzer curves.

If $E$ is a Neumann-Setzer curve, then $E$ is a factor of $\widetilde{J}^{(2)}$ (\cite{Mz}, Chapter3, Proposition7.4). Moreover, if $u=\pm3\pmod8$, then $\widetilde{J}^{(2)}$ is simple (\cite{Mz}, Chapter3, Proposition7.5), so is a Neumann-Setzer itself.
\end{example}

We recall the following lemma
\begin{lem}(Birch's Lemma)
Let $A$ be an abelian variety over $\BQ$ and $f:X_0(N)\rightarrow A$ be a morphism over $\BQ$. If $f+f^{w_N}$ is a constant which does not belong to $2\cdot A(\BQ)$, then the image of the Heegner point $y_K$ on $A$ is not torsion, here $w_N$ is the Atkin-Lehner involution.
\end{lem}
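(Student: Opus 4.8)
The plan is to linearize $f$ and reduce everything to a single relation between the Heegner point, its complex conjugate, and the rational cuspidal class of the two cusps. Since $A$ is an abelian variety and $\infty$ is a rational point of $X=X_0(N)$, the morphism $f$ factors as $f=\phi\circ i+f(\infty)$, where $i\colon X\to J$ is the Abel--Jacobi embedding normalized by $i(\infty)=0$ and $\phi\colon J\to A$ is a homomorphism defined over $\BQ$. Writing $[x]=(x)-(\infty)$ and using $[w_N x]=w_N[x]+c_{\mathrm{sp}}$ with $c_{\mathrm{sp}}:=(0)-(\infty)$ (because $w_N$ interchanges the cusps $0$ and $\infty$), a direct computation gives $(f+f^{w_N})(x)=\phi\bigl((1+w_N)[x]\bigr)+\phi(c_{\mathrm{sp}})+2f(\infty)$. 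Hence the hypothesis is equivalent to $\phi\circ(1+w_N)=0$ on $J$ (the classes $[x]$ generate $J$), and the constant value is $c=\phi(c_{\mathrm{sp}})+2f(\infty)$. Setting $c_0:=\phi(c_{\mathrm{sp}})$, which is a $\BQ$-rational torsion point because $c_{\mathrm{sp}}$ lies in the cuspidal subgroup $C$, the assumption $c\notin 2A(\BQ)$ becomes $c_0\notin 2A(\BQ)$.

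Next I would invoke the conjugation law for Heegner points. By Shimura reciprocity the Heegner point $x_1\in X(H)$ satisfies $\overline{x_1}=w_N(x_1^{\tau})$ for a suitable $\tau\in\Gal(H/K)$. Taking the trace $y_K=\sum_{s\in\Gal(H/K)}[x_1^{s}]$, using that $w_N$ is defined over $\BQ$ and commutes with $\Gal(H/K)$, and reindexing the sum, one obtains $\overline{y_K}=w_N(y_K)+h_K\,c_{\mathrm{sp}}$ in $J(K)$, where $h_K=[H:K]$; recall that $h_K$ is odd by genus theory in the cases of interest. Applying $\phi$ together with the identity $\phi\circ w_N=-\phi$ from the previous step yields, for $z:=\phi(y_K)$ and $\sigma$ complex conjugation,
\[z+z^{\sigma}=h_K\,c_0.\]
Since $h_K$ is odd, $h_Kc_0\equiv c_0\pmod{2A(\BQ)}$, so $\RTr_{K/\BQ}(z)=z+z^{\sigma}\notin 2A(\BQ)$; in particular $z\neq 0$ and $z$ does not lie in the minus part $A(K)^{-}$.

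The final and only substantial step is to upgrade $z\neq 0$ to the statement that $z$ is \emph{non-torsion}, and this is where the real work lies. The difficulty is that a torsion point can have a nonzero trace outside $2A(\BQ)$ (for instance when a pair of conjugate $2$-torsion points has nonvanishing trace), so the relation $z+z^{\sigma}\notin 2A(\BQ)$ does not by itself exclude $z\in A(K)_{\mathrm{tor}}$. I would resolve this by a $2$-descent in the spirit of Section~3: feeding $z$ into the Kummer map $A(K)/2A(K)\hookrightarrow H^1(G_K,A[2])$ and computing its image under the descent map $\delta$ attached to the cuspidal point, one shows the class of $z$ cannot come from $A(K)_{\mathrm{tor}}$, since $z$ lies in the minus eigenspace modulo torsion while the condition $c_0\notin 2A(\BQ)$ pins down its local behaviour at the ramified and archimedean places. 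Alternatively, non-vanishing of the N\'eron--Tate height $\hat{h}(z)$, hence non-torsionness, follows from the Gross--Zagier formula once $z+z^{\sigma}\neq 0$ is combined with the eigenspace information. Either route concentrates all the analytic or descent-theoretic content into this last step, while the conjugation law and the descent of the constant are purely formal.
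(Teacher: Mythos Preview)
The paper itself leaves this proof blank (the placeholder ``***''), so there is nothing to compare against directly; I can only comment on your argument on its own merits.

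Your linearization $f=\phi\circ i+f(\infty)$, the identity $[w_Nx]=w_N[x]+c_{\mathrm{sp}}$, and the derivation of $z+z^{\sigma}=h_K\,c_0$ with $c_0=\phi(c_{\mathrm{sp}})\notin 2A(\BQ)$ (under $h_K$ odd) are all correct and are exactly the standard skeleton of Birch's lemma. You are also right that $h_K$ odd is an unspoken hypothesis: it is present in every application in the paper (Theorems on $\widetilde J^{(2)}$ and on the Gross curve) but not in the lemma as stated.

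Where your proposal becomes unsatisfying is the last step. You correctly flag that ``$z+z^{\sigma}\notin 2A(\BQ)$'' does not by itself rule out $z$ torsion; your counterexample instinct is sound (e.g.\ if $A[2]\subset A(K)$ one can have conjugate $2$--torsion points summing to a nonzero rational $2$--torsion class). But the two fixes you propose are either too vague (the ad hoc $2$--descent sketch does not actually pin anything down) or too heavy and circular in spirit (invoking Gross--Zagier to prove a lemma whose whole point is to avoid analytic input). The clean elementary argument uses one extra hypothesis that the paper's applications all satisfy: assume the index $m:=[A(K)_{\tor}:A(\BQ)_{\tor}]$ is odd (in particular this holds when $A(K)_{\tor}=A(\BQ)_{\tor}$). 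If $z$ were torsion then $mz\in A(\BQ)$, hence
\[
m\,(z+z^{\sigma})=mz+(mz)^{\sigma}=2\,mz\in 2A(\BQ).
\]
Since $A(\BQ)/2A(\BQ)$ is an $\BF_2$--vector space and $m$ is odd, multiplication by $m$ is a bijection on it, so $z+z^{\sigma}\in 2A(\BQ)$, contradicting $h_Kc_0\notin 2A(\BQ)$. This is the missing line; it replaces your final paragraph entirely. In the paper's uses of the lemma this torsion control is available (from Mazur's description of $J[m_q]$ in the prime--level case, and from the explicit CM structure of the Gross curve in the $p^2$ case), so the lemma should really be read with that side condition.
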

\begin{proof}
***
\end{proof}

For the $2$-Eisenstein quotient, we can prove the following
\begin{thm}\label{Heegner point on 2-Eisenstein}
Suppose $2\mid\frac{p-1}{(12,p-1)}$. If $h_K$ is odd, then $y^{(2)}_K$ is of infinite order in $\widetilde{J}^{(2)}(K)$.
\end{thm}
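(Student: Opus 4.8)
The plan is to deduce this from Birch's Lemma, applied with $A=\widetilde{J}^{(2)}$ and $f\colon X\to A$ the composite of the canonical map $i\colon X\to J$ (which sends $\infty$ to $0$) with the projection $\pi\colon J\to\widetilde{J}^{(2)}$. Since Birch's Lemma asks for $f+f^{w_p}$ to be a constant not lying in $2A(\BQ)$, the first task is to identify that constant. Writing $c=(0)-(\infty)$ for the generator of the cuspidal group $C$ and using $w_p(\infty)=0$, one checks that the two maps $i\circ w_p$ and $w_p\circ i$ differ by the constant $c$, so that $f+f^{w_p}=(1+w_p)\circ f+\pi(c)$. Because $w_p=-T_p$ and $T_p-1\in\BI$, I would then argue that $w_p$ acts as $-1$ on the relevant piece of $A$, which kills the term $(1+w_p)\circ f$ and exhibits $f+f^{w_p}$ as the constant $\pi(c)$, the image of the cusp.

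Second, I would show $\pi(c)\notin 2A(\BQ)$. The hypothesis $2\mid n$ guarantees that the cuspidal point has even order in the $2$-Eisenstein quotient, and by Proposition~\ref{Mazur1}(3) the cuspidal group maps isomorphically onto $\widetilde{J}(\BQ)_{\tor}$, so $\pi(c)$ generates the full $2$-primary torsion of $A(\BQ)$; in particular it is nonzero of even order. To exclude divisibility by $2$ coming from a point of infinite order, I would evaluate $\pi(c)$ under an Eisenstein descent of the type introduced in Section~3 and verify via the explicit modular-unit description of Proposition~\ref{modular unit} that its class in $A(\BQ)/2A(\BQ)$ is nontrivial. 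With the constant $\pi(c)$ shown to lie outside $2A(\BQ)$, Birch's Lemma then produces a non-torsion image of $y_K$ on $A$; this is exactly where the hypothesis that $h_K$ is odd enters, since the trace of the basic Heegner point over $\Gal(H_K/K)$ contributes, under $f$, the element $h_K\cdot\pi(c)$, and $h_K$ odd keeps this multiple outside $2A(\BQ)$.

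I expect the genuine obstacle to be the first step. For odd $q$ the congruence pins down $a_p=1$, forcing $w_p=-1$ on the quotient, but at $q=2$ the Eisenstein congruence $a_p\equiv1\pmod2$ does not separate $a_p=+1$ from $a_p=-1$, so $\widetilde{J}^{(2)}$ may carry newform factors on which $w_p$ acts by $+1$, and $f+f^{w_p}$ is then \emph{not} constant on all of $\widetilde{J}^{(2)}$. The natural remedy is to replace $A$ by the largest quotient $A^{-}$ of $\widetilde{J}^{(2)}$ on which $w_p=-1$ (the part receiving $c$, since $w_p$ already acts by $-1$ on $C$): non-torsion of the image of $y_K$ in $A^{-}$ forces non-torsion in $\widetilde{J}^{(2)}$. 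The delicate points are then to confirm that $\pi(c)$ survives as a nonzero, non-$2$-divisible element of $A^{-}(\BQ)$ and that $A^{-}$ is nonzero whenever $2\mid n$, so that the application of Birch's Lemma to $A^{-}$ is not vacuous.
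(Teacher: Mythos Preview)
Your approach via Birch's Lemma applied to $A=\widetilde{J}^{(2)}$ is exactly the paper's, and your computation $f+f^{w_p}=(1+w_p)\circ f+\pi(c)$ is correct. The paper dispatches both of your anticipated difficulties by direct appeal to \cite{Mz}, so the workarounds you propose are not needed.

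On your first concern: Mazur proves that $w_p$ acts as $-1$ on the full Eisenstein quotient $\widetilde{J}$ (\cite{Mz}, Chapter~II, Proposition~17.10), hence on its quotient $\widetilde{J}^{(2)}$; thus $(1+w_p)\circ f=0$ and $f+f^{w_p}=\pi(c)$ is constant on all of $\widetilde{J}^{(2)}$. Your worry that modulo $2$ one cannot distinguish $a_p=+1$ from $a_p=-1$ is natural, but the assertion $w_p=-1$ on $\widetilde{J}$ is a theorem of Mazur, not merely a congruence, so no passage to a further $w_p=-1$ quotient $A^{-}$ is required.

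On your second concern: Mazur's main theorem in \cite{Mz} is that $\widetilde{J}(\BQ)$ is \emph{finite} and equal to the image of $C$; consequently $\widetilde{J}^{(2)}(\BQ)$ is itself a nontrivial cyclic $2$-group generated by $\pi(c)$. There are no $\BQ$-rational points of infinite order to worry about, and a generator of a nontrivial cyclic $2$-group is never twice an element. The Eisenstein-descent verification you propose is therefore unnecessary.
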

\begin{proof}
Consider the morphism $f:X\rightarrow\widetilde{J}^{(2)}$ obtained from the composition of the natural $X\rightarrow J$ and the projection $J\rightarrow\widetilde{J}^{(2)}$.

By the definition of the Eisenstein ideal $\BI$, we know that $w_p$ acts as $-1$ on $\widetilde{J}$ and hence also $-1$ on $\widetilde{J}^{(2)}$, so $f+f^{w_p}$ is a constant morphism.The image is just the projection of $[0]-[\infty]$.

By Theorem~\ref{Mazur2}, the projection of $[0]-[\infty]$ on $\widetilde{J}^{(2)}$ is a generator of $\widetilde{J}^{(2)}(\BQ)$ which is a cyclic $2$-group. In particular, the image of $f+f^{w_p}$ is not in $2\cdot\widetilde{J}^{(2)}(\BQ)$, so we get the conclusion by using Birch's Lemma.
\end{proof}

\begin{cor}\label{Heegner point on NS-curve}
Suppose $p$ is of the form $u^2+64$ with $u=\pm3\pmod8$ and let $E$ be a Neumann-Setzer curve. If $K$ is an imaginary quadratic field with odd class number such that $p$ splits in $K$, then $rank_{\BZ}(E(K))=1$ and $\Sha(E/K)$ is finite.
\end{cor}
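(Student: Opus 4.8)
The plan is to identify $\widetilde{J}^{(2)}$ with $E$ up to isogeny, transport the non-triviality statement of Theorem~\ref{Heegner point on 2-Eisenstein} to $E$, and then invoke Kolyvagin's theorem.

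First I would check that the hypotheses put us in the setting of Example~\ref{Example1} and Theorem~\ref{Heegner point on 2-Eisenstein}. As $u$ is odd, $u^2\equiv1\pmod8$, so $p=u^2+64\equiv1\pmod8$ and hence $v_2(p-1)\geq3$. Since $v_2(12)=2$, we get $v_2\bigl(\tfrac{p-1}{(12,p-1)}\bigr)=v_2(p-1)-2\geq1$, i.e. $2\mid n$ with $n=\tfrac{p-1}{(12,p-1)}$; in particular $2\mid\tfrac{p-1}{(12,p-1)}$ and the $2$-Eisenstein quotient $\widetilde{J}^{(2)}$ exists. Because $u\equiv\pm3\pmod8$, Example~\ref{Example1} shows that $\widetilde{J}^{(2)}$ is simple and is itself a Neumann--Setzer curve. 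As the Neumann--Setzer curves of conductor $p$ form a single isogeny class, $\widetilde{J}^{(2)}$ is $\BQ$-isogenous to $E$.

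Next I would apply Theorem~\ref{Heegner point on 2-Eisenstein}. Since $p$ splits in $K$, the pair $(K,p)$ satisfies the Heegner hypothesis, and by assumption $h_K$ is odd; the theorem then gives that $y^{(2)}_K$ is of infinite order in $\widetilde{J}^{(2)}(K)$. An isogeny has finite kernel, so infinite order is preserved under $\widetilde{J}^{(2)}\to E$; hence the image of the Heegner point $y_K$ under the modular parametrization $X\to J\to E$ is a point of infinite order in $E(K)$.

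Finally, I would invoke Kolyvagin's theorem: once the Heegner point attached to $(E,K)$ is known to be non-torsion in $E(K)$, the Euler system argument yields $\rank_{\BZ}E(K)=1$ together with the finiteness of $\Sha(E/K)$, which is exactly the assertion. The one point requiring care --- and the only genuine obstacle --- is the bookkeeping identifying the point produced by Theorem~\ref{Heegner point on 2-Eisenstein} on the quotient $\widetilde{J}^{(2)}$ with the Heegner point on $E$ to which Kolyvagin's theorem is applied; but since these differ only by the isogeny $\widetilde{J}^{(2)}\sim E$ and by torsion, and an isogeny preserves infinite order in both directions, this identification is harmless.
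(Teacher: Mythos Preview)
Your argument is correct and is exactly the intended deduction: the paper states this as a corollary with no proof, so the implicit reasoning is precisely to combine Example~\ref{Example1} (which identifies $\widetilde{J}^{(2)}$ with a Neumann--Setzer curve when $u\equiv\pm3\pmod8$) with Theorem~\ref{Heegner point on 2-Eisenstein} and then apply Kolyvagin's theorem. Your verification that $2\mid\frac{p-1}{(12,p-1)}$ and your handling of the isogeny are both fine.
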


$\\$
\begin{remark} When $p$ is inertia in $K$, we have the following results of Mazur and Gross (See \cite{Mz2}, Page231 and \cite{G2}):

(1) Suppose $q$ is an odd divisor of $\frac{p-1}{(12,p-1)}$ and $(q,K)\neq(3,\BQ(\sqrt{-3}))$. If $K$ is an imaginary quadratic field such that $p$ is inertia in $K$ and $q\nmid h_K$ where $h_K$ is the class number of $K$, then $\widetilde{J}^{(q)}(K)$ is finite and $\Sha(J/K)[m_q]=0$;

(2) Assume $2\mid\frac{p-1}{(12,p-1)}$. If $K$ is an imaginary quadratic field such that $p$ is inertia in $K$ and $h_K$ is odd, then there is a cusp form which is congruent to $\delta\pmod2$, such that $L(f,K,1)\neq0$, where $\delta$ is the Eisenstein series.

So if $p$ is of the form $u^2+64$ with $u=\pm3\pmod8$ and $K$ is an imaginary quadratic field such that $p$ is inertia in $K$ and $h_K$ is odd, then $E(K)$ is finite for any Neumann-Setzer curve $E$, because there is a non-trivial morphism $E\rightarrow\widetilde{J}^{(2)}$  and $\widetilde{J}^{(2)}$ is simple when $u=\pm3\pmod8$ as we mentioned in Example~\ref{Example1}.

\end{remark}

\section{Level $p^2$ case}
In this section, we let $N=p^2$ be the square of a prime $p$.
\subsection{Eisenstein quotients}
Let $p$ be prime and $X/\BQ$ the modular curve of level $p^2$.

The cusps of $X$ are $[0]$, $[\infty]$ and $\{[\frac{i}{p}]\}_{1\leq i\leq p-1}$ with $[0]$ and $[\infty]$ $\BQ$-rational and $\{[\frac{i}{p}]\}_{1\leq i\leq p-1}$ form one orbit of $G_\BQ$. So the rational cuspidal divisor subgroup of $J(\BQ)$ $C$ is generated by $C_1=[0]-[\infty]$ and $C_p=\sum_{i=1}^{p-1}[\frac{i}{p}]-(p-1)[\infty]$. We know that both $C_1$ and $C_p$ have order $n=\frac{p^2-1}{24}$, and $C\cong(\BZ/\frac{p-1}{(p-1,12)}\BZ)^{\oplus2}\bigoplus(\BZ/\frac{p+1}{(p+1,12)}\BZ)$ (see \cite{Ling}).

Let $\BT=\BZ[\{T_n\}]\subseteq End(J/\BQ)$ be the ring of Hecke algebra of level $p^2$.

From the definition of the Hecke actions, we have $T_p\cdot C_p=0$ and $(T_l-(1+l))\cdot C_p=0$ for any $l\neq p$. Let \[\BI=(T_p,\{T_l-(1+l)\}_{l\neq p})\] be an ideal in $\BT$, which will be called the $Eisenstein\ ideal\ of\ level\ p^2\ corresponding to\ C_p$.

By the above Lemma, we see that there is a surjective homomorphism $\BT/\BI\rightarrow {\BZ}/{n\BZ}$ given by the action of $\BT$ on $C_p$.

\begin{lem}\label{cyclic torsion}
There is an integer $m$ such that $\BZ/m\BZ\cong \BT/\BI$.
\end{lem}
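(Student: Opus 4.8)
The plan is to show that the structure map $\BZ\to\BT/\BI$ is surjective; this forces $\BT/\BI$ to be a cyclic $\BZ$-module, hence isomorphic to $\BZ/m\BZ$ for some $m\geq0$. A separate and more substantial argument then rules out $m=0$, so that $\BT/\BI$ is genuinely \emph{finite} cyclic.

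First I would record that $\BT$ is generated as a $\BZ$-algebra by the Hecke operators $T_\ell$ at the primes $\ell$. Indeed, for $\ell\neq p$ one has the usual recursion $T_{\ell^{r+1}}=T_\ell T_{\ell^r}-\ell\,T_{\ell^{r-1}}$ (the diamond operators being trivial on $X_0(p^2)$), while $T_{p^r}=T_p^{\,r}$ and $T_{mn}=T_mT_n$ for coprime $m,n$; thus every $T_n$ lies in $\BZ[\{T_\ell\}_\ell]$. Now reduce modulo $\BI=(T_p,\{T_\ell-(1+\ell)\}_{\ell\neq p})$: by construction $T_p\equiv0$ and $T_\ell\equiv1+\ell$ for each $\ell\neq p$. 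Since both $0$ and $1+\ell$ lie in the image of $\BZ\cdot1\subseteq\BT$, every algebra generator of $\BT$ maps into the image of $\BZ$ in $\BT/\BI$. Consequently $\BZ\to\BT/\BI$ is onto, so $\BT/\BI$ is cyclic and $\BT/\BI\cong\BZ/m\BZ$ for some integer $m\geq0$. The surjection $\BT/\BI\twoheadrightarrow\BZ/n\BZ$ coming from the action on $C_p$ then shows $n\mid m$.

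To see that $m$ is a positive integer, that is, that $\BT/\BI$ is finite, I would pass to $\BQ$-coefficients. Because $\BT\subseteq\End(J)$, the ring $\BT$ is a finitely generated $\BZ$-module, and $\BT\otimes\BQ$ is a product of number fields, one for each cuspidal Hecke eigensystem occurring in $J$, on which $T_\ell$ ($\ell\neq p$) acts by an eigenvalue $a_\ell$ satisfying the Ramanujan bound $|a_\ell|\leq2\sqrt\ell$. If $\BT/\BI$ were infinite, then $\BI\otimes\BQ$ would be a proper ideal, hence contained in the kernel of some projection $\BT\otimes\BQ\to K_f$; this would force $a_\ell=1+\ell$ for every $\ell\neq p$, contradicting $|a_\ell|\leq2\sqrt\ell<\ell+1$. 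Therefore $\BI\otimes\BQ=\BT\otimes\BQ$, the quotient $\BT/\BI$ is torsion, and $m>0$.

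The formal part, namely that $\BT/\BI$ is cyclic, is immediate once one knows $\BT$ is generated by the $T_\ell$. The step I expect to require the most care is the finiteness claim: ruling out $m=0$ rests on the fact that the Eisenstein eigenvalue system $T_\ell\mapsto1+\ell$ is not cuspidal, which I would justify via the Ramanujan--Petersson estimate (equivalently, via the non-cuspidality of Eisenstein series). This is precisely the input that distinguishes the Eisenstein ideal from an arbitrary ideal and guarantees that $\BT/\BI$ is a genuine finite cyclic group.
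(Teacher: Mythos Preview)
Your argument is correct and follows the same two-step outline as the paper: first, surjectivity of $\BZ\to\BT/\BI$ (which the paper simply calls ``clear''; your justification via the Hecke recursions makes this explicit), and second, ruling out $m=0$ by showing the Eisenstein eigensystem cannot be cuspidal.

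The only real difference is in how the second step is executed. You invoke the Ramanujan--Petersson bound $|a_\ell|\le 2\sqrt{\ell}<\ell+1$ directly on the eigensystem attached to any factor of $\BT\otimes\BQ$. The paper instead passes through the duality pairing $\BT\times S_2(\Gamma_0(p^2),\BZ)\to\BZ$ to produce an actual \emph{rational} cusp form with $a_\ell=\ell+1$, then argues (somewhat obliquely, via an associated elliptic curve) that this is impossible. Your route is cleaner and avoids the unnecessary claim that the form is new with rational coefficients; the paper's route has the advantage of setting up the pairing that is used again in the proof of Theorem~\ref{index}. Substantively the contradictions are the same: both come down to the Hasse--Weil/Ramanujan bound excluding $a_\ell=\ell+1$.
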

\begin{proof}
It is clear that the natural $\BZ\rightarrow \BT/\BI$ is surjective.

If $\BZ\cong \BT/\BI$, then under the perfect pairing
\[\BT\times S_2(\Gamma_0(p^2),\BZ)\rightarrow\BZ\]
(\cite{R} for the notations and results), we find that $S_2(\Gamma_0(p^2),\BZ)[\BI]\cong \BZ$ which means there is a rational newform whose eigenvalues $a_l$ is $l+1$ for any $l\neq p$. But this will give an elliptic curve over $\BQ$ which is supersingular at all good places, which is impossible.

So there is an $m\in\BZ$ such that $\BZ/m\BZ\cong \BT/\BI$.
\end{proof}

Let $\delta(z)=\sum_{(m,p)=1}\sigma(m)q^m$ ($q=e^{2\pi iz}$).

\begin{lem}\label{delta}
$\delta(z)\in M_2(\Gamma_0(p^2,\BZ))[\BI]$.
\end{lem}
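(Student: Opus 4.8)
The plan is to identify $\delta$ with a concrete weight-two Eisenstein series and then to read off modularity, integrality, and the Hecke eigenvalues directly from the $q$-expansion. Writing $F(z)=\sum_{m\ge 1}\sigma(m)q^{m}=\tfrac{1-E_2(z)}{24}$, where $E_2(z)=1-24\sum_{m\ge1}\sigma(m)q^{m}$, I would first establish the formal identity
\[
\delta(z)=F(z)-(p+1)F(pz)+pF(p^{2}z)=\tfrac{1}{24}\bigl(-E_2(z)+(p+1)E_2(pz)-pE_2(p^{2}z)\bigr).
\]
This is a routine coefficient comparison: the form $F(z)-pF(pz)$ has $q^{p^{k}n}$-coefficient $\sigma(n)$ for every $k\ge0$ and every $n$ prime to $p$ (using $\sigma(p^{k+1})-p\sigma(p^{k})=1$), and subtracting its $p$-dilate telescopes away all the $k\ge1$ contributions, leaving exactly $\sum_{(m,p)=1}\sigma(m)q^{m}$.

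For modularity I would invoke the quasimodular transformation $E_2(\gamma z)=(cz+d)^{2}E_2(z)-\tfrac{6ic}{\pi}(cz+d)$ for $\gamma=\left(\begin{smallmatrix}a&b\\c&d\end{smallmatrix}\right)$. Each summand $E_2(ez)$ acquires an anomaly linear in the lower-left entry, and for a combination $\sum_{e\mid p^{2}}c_{e}E_2(ez)$ these anomalies cancel precisely when $\sum_{e\mid p^{2}}c_{e}/e=0$ (the case $c_1=1,\,c_e=-e$ recovering the familiar modular form $E_2(z)-eE_2(ez)$ on $\Gamma_0(e)$). Here $c_1=-1$, $c_p=p+1$, $c_{p^{2}}=-p$, and indeed $-1+\tfrac{p+1}{p}-\tfrac{1}{p}=0$, so $\delta\in M_2(\Gamma_0(p^{2}))$. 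Integrality is immediate, since every coefficient $\sigma(m)$ lies in $\BZ$ and the constant term is $0$; thus $\delta\in M_2(\Gamma_0(p^{2}),\BZ)$.

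It remains to check that $\delta$ is killed by $\BI=(T_p,\{T_l-(1+l)\}_{l\ne p})$. Since $p^{2}\mid N$ the operator $T_p$ is $U_p$, whose $q^{m}$-coefficient on $\delta$ is $a_{pm}=0$ (as $p\mid pm$) and whose constant term is $a_0=0$; hence $T_p\delta=0$. For $l\ne p$ the operator $T_l$ sends the $q^{m}$-coefficient to $a_{lm}+l\,a_{m/l}$, and I would verify $a_{lm}+l\,a_{m/l}=(1+l)a_{m}$ for all $m$ by splitting into the case $p\mid m$ (where all terms vanish) and the case $(m,p)=1$, in which multiplicativity of $\sigma$ together with the identity $\sigma(l^{j+1})+l\,\sigma(l^{j-1})=(1+l)\sigma(l^{j})$ gives the claim. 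Consequently $(T_l-(1+l))\delta=0$ for every $l\ne p$, so $\delta\in M_2(\Gamma_0(p^{2}),\BZ)[\BI]$.

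The main obstacle is the middle step: because $E_2$ is not itself modular, establishing that the particular combination $-E_2(z)+(p+1)E_2(pz)-pE_2(p^{2}z)$ is genuinely a modular form requires the precise cancellation of the quasimodular anomalies, encoded in the condition $\sum_{e}c_{e}/e=0$. Once modularity is secured, integrality and the Hecke relations are purely formal manipulations of the $q$-expansion.
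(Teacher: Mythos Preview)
Your argument is correct and follows the same route as the paper. The paper writes $\delta=\tfrac{1}{24}(e^{(p)}-e)$ with $e(z)=(1-p)-24\sum_m\sigma'(m)q^m\in M_2(\Gamma_0(p),\BZ)$ taken from Mazur; since $e=E_2-pE_2^{(p)}$, one has $e^{(p)}-e=-E_2+(p+1)E_2^{(p)}-pE_2^{(p^2)}$, which is exactly your combination. The only difference is packaging: the paper invokes Mazur for the modularity of the level-$p$ piece $e$ and then lifts to level $p^2$ by $z\mapsto pz$, whereas you verify the quasimodular cancellation $\sum c_e/e=0$ from scratch; both then read the Hecke relations off the $q$-expansion.
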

\begin{proof}
Let $e(z)=(1-p)-24\sum^{\infty}_{m=1}\sigma'(m)q^m$ ($q=e^{2\pi iz}$) which is in $M_2(\Gamma_0(p),\BZ)$ (\cite{Mz}, our $e$ is his $e'$), and $e^{(p)}(z)=e(pz)$. It is easy to see that $\delta=\frac{1}{24}(e^{p}-e)$, so it is in $M_2(\Gamma_0(p^2,\BZ))$. It is easy by definition that $\delta$ is annihilated by $\BI$.
\end{proof}

From (\cite{Mz}, Chapter2, section5) we know the expansion of $e$ at $0$ is

$e|[\left(
                                                                                  \begin{array}{cc}
                                                                                    0 & -1 \\
                                                                                    1 & 0 \\
                                                                                  \end{array}
                                                                                \right)
]_2=\frac{1}{p}(p-1)+24\sum\frac{\sigma'(m)}{p}q^{\frac{m}{p}}$.

We can use this to determine the Fourier expansion of $\delta$ at $[0]$ and $[\frac{i}{p}]$($1\leqslant i\leqslant p-1$):

$\bullet$ At $[\frac{i}{p}]$:
Take $u,v\in \BZ$ such that $ui-pv=1$, then we have

$\delta|[\left(
           \begin{array}{cc}
             i & v \\
             p & u \\
           \end{array}
         \right)
]_2=\frac{p^2-1}{24p}+....$
\\

$\bullet$ At $[0]$:

$\delta|[\left(
           \begin{array}{cc}
             0 & -1 \\
             1 & 0 \\
           \end{array}
         \right)
]_2=\frac{(p^2-1)(1-p)}{24p}+....$

\begin{lem}\label{delta as cusp form}
Let $d$ be an integer prime to $p$, if $\delta\in S_2(\Gamma_0(p^2),\BZ/d\BZ)$, then $d|n$.
\end{lem}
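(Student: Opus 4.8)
The plan is to read the statement off from the constant terms of $\delta$ at the cusps, which have essentially already been recorded. Recall that a weight-two form on $\Gamma_0(p^2)$ reduces to a cusp form modulo $d$ exactly when the constant term of its $q$-expansion at every cusp vanishes modulo $d$. Since $\gcd(d,p)=1$, each such constant term — a rational number whose denominator is a power of $p$ — reduces to a well-defined element of $\BZ/d\BZ$, and cuspidality modulo $d$ is precisely the simultaneous vanishing of all of these. So I would simply examine each cusp in turn.

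At $[\infty]$ the expansion is $\delta=\sum_{(m,p)=1}\sigma(m)q^m$, which has no constant term. At the cusps $[\frac{i}{p}]$ and $[0]$ the constant terms were computed just above to be $\frac{p^2-1}{24p}$ and $\frac{(p^2-1)(1-p)}{24p}$ respectively. Writing $n=\frac{p^2-1}{24}$, the constant term at each $[\frac{i}{p}]$ (for $1\le i\le p-1$, independently of $i$ since these cusps form one Galois orbit) is $\frac{n}{p}$. The hypothesis $\delta\in S_2(\Gamma_0(p^2),\BZ/d\BZ)$ forces this to vanish in $\BZ/d\BZ$.

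Since $\gcd(d,p)=1$, the residue of $p$ is a unit in $\BZ/d\BZ$, so $\frac{n}{p}\equiv0\pmod d$ is equivalent to $n\equiv0\pmod d$, that is $d\mid n$, which is the claim. Note that the cusp $[0]$ yields only the weaker relation $n(1-p)\equiv0\pmod d$ and the cusp $[\infty]$ yields nothing, so it is the cusps $[\frac{i}{p}]$ that carry the argument. The single point needing care — and the place where I would be most careful — is the precise meaning of ``cusp form modulo $d$'' together with the legitimacy of reducing the constant terms: because the denominators occurring are powers of $p$ and hence units modulo $d$, this reduction is unambiguous and the vanishing-at-cusps criterion applies verbatim; alternatively one invokes the integral $q$-expansion theory for $M_2(\Gamma_0(p^2),\BZ)$ to justify the same conclusion.
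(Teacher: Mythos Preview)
Your proposal is correct and follows essentially the same route as the paper's proof: both arguments read off the conclusion from the constant term $\tfrac{n}{p}$ of $\delta$ at the cusps $[\tfrac{i}{p}]$, using that $p$ is invertible modulo $d$. The paper is just slightly more explicit about the mechanism: it unwinds the definition $\delta=f+dg$ with $f\in S_2(\Gamma_0(p^2),\BZ)$ and $g\in M_2(\Gamma_0(p^2),\BZ)$, and then invokes Katz's $q$-expansion principle to ensure that the constant term of $g$ at each cusp lies in $\BZ[\tfrac{1}{p},\zeta_{p^2}]$, which is exactly the ``integrality of the constant terms'' step you flag as needing care.
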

\begin{proof}
By definition, $\delta\in S_2(\Gamma_0(p^2),\BZ/d\BZ)$ if and only if there are $f\in S_2(\Gamma_0(p^2),\BZ)$ and $g\in M_2(\Gamma_0(p^2),\BZ)$ such that  $\delta=f+dg$.

Since $g\in M_2(\Gamma_0(p^2),\BZ)$, we know the expansion of $g$ at any cusps belong to $\BZ[\frac{1}{p},\zeta_{p^2}]$ by (\cite{K}, Chapter1, Cor1.6.2). The assertion follows from this and the discussion of the expansions of $\delta$ above.
\end{proof}

\begin{thm}\label{index}
$(\BT/\BI)\otimes\BZ[\frac{1}{p}]\cong \BZ/n\BZ$.
\end{thm}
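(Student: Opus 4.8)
The plan is to pin down the prime-to-$p$ part of the integer $m$ furnished by Lemma~\ref{cyclic torsion}, where $\BT/\BI\cong\BZ/m\BZ$, and to show it equals $n$. Write $m=p^{v_p(m)}m'$ with $(m',p)=1$; since $n=\frac{p^2-1}{24}$ is prime to $p$, we have $(\BT/\BI)\otimes\BZ[\frac1p]\cong\BZ/m'\BZ$, so the theorem is equivalent to the single assertion $m'=n$. One divisibility is free: the surjection $\BT/\BI\twoheadrightarrow\BZ/n\BZ$ coming from the action of $\BT$ on $C_p$ (noted just before Lemma~\ref{cyclic torsion}) forces $n\mid m$, and as $(n,p)=1$ this gives $n\mid m'$. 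The real content is therefore the reverse divisibility $m'\mid n$.

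For the reverse divisibility I would feed the Eisenstein series $\delta$ of Lemma~\ref{delta} into Lemma~\ref{delta as cusp form}: it suffices to prove $\delta\in S_2(\Gamma_0(p^2),\BZ/m'\BZ)$, for then $m'\mid n$ is immediate. To manufacture the requisite cusp form I would use the same perfect $q$-expansion pairing of \cite{R} already invoked in the proof of Lemma~\ref{cyclic torsion},
\[\BT\times S_2(\Gamma_0(p^2),\BZ)\lra\BZ,\qquad (T,f)\longmapsto a_1(Tf),\]
which identifies $S_2(\Gamma_0(p^2),\BZ)\cong\Hom_\BZ(\BT,\BZ)$ as $\BT$-modules. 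Since $\BT$ is finite free over $\BZ$, reducing this identification modulo $m'$ gives
\[S_2(\Gamma_0(p^2),\BZ/m'\BZ)\cong\Hom_\BZ(\BT,\BZ/m'\BZ),\qquad \bar f\longmapsto\bigl(T\mapsto a_1(T\bar f)\bigr).\]
The composite $\bar\phi:\BT\twoheadrightarrow\BT/\BI\cong\BZ/m\BZ\twoheadrightarrow\BZ/m'\BZ$ is an element of the right-hand side, with $\bar\phi(T_\ell)=(1+\ell)\bmod m'$ for $\ell\neq p$ and $\bar\phi(T_p)=0$, these being the images of the generators $T_\ell-(1+\ell)$ and $T_p$ of $\BI$. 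Let $\bar f\in S_2(\Gamma_0(p^2),\BZ/m'\BZ)$ be the cusp form corresponding to $\bar\phi$.

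I would then compare Fourier coefficients. Using the standard identity $a_n(f)=a_1(T_nf)$ (with $T_{p^k}=U_p^{\,k}$ at the bad prime) together with the multiplicativity of $\bar\phi$, one finds $a_n(\bar f)=\bar\phi(T_n)=\sigma(n)\bmod m'$ for $(n,p)=1$, while $a_n(\bar f)=0$ for $p\mid n$ because $\bar\phi(T_p)=0$. These are exactly the Fourier coefficients of $\delta$, so $\bar f$ and $\delta$ have identical $q$-expansions modulo $m'$. Lifting $\bar f$ to an integral cusp form $f$ and applying the $q$-expansion principle to $\delta-f$, all of whose coefficients lie in $m'\BZ$, yields $\delta-f=m'g$ with $g\in M_2(\Gamma_0(p^2),\BZ)$; that is, $\delta\in S_2(\Gamma_0(p^2),\BZ/m'\BZ)$. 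Lemma~\ref{delta as cusp form} then gives $m'\mid n$, and together with $n\mid m'$ this forces $m'=n$, i.e. $(\BT/\BI)\otimes\BZ[\frac1p]\cong\BZ/n\BZ$.

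The step I expect to demand the most care is the passage through the duality: one must know the pairing of \cite{R} is perfect \emph{over $\BZ$} (so that the mod-$m'$ reduction of the $\Hom$-identification is legitimate), and one must correctly translate the abstract Hecke functional $\bar\phi$ into a genuine $q$-expansion via $a_n(f)=a_1(T_nf)$. The prime-to-$p$ localization in the statement is precisely what lets us discard the possibly nonzero $p$-part of $m$, which is invisible both to the cuspidal group $C_p$ and to $\delta$, and it is exactly the hypothesis $(d,p)=1$ under which Lemma~\ref{delta as cusp form} applies.
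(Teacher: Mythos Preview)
Your argument is correct and follows essentially the same route as the paper: both proofs exploit the perfect pairing $\BT\times S_2(\Gamma_0(p^2),\BZ)\to\BZ$ of \cite{R} to show that $\delta$ becomes a cusp form modulo the prime-to-$p$ part of $m$, and then invoke Lemma~\ref{delta as cusp form}. The only cosmetic difference is that the paper dualizes $\BT/\BI$ to the space $S_2(\Gamma_0(p^2),\BZ/m\BZ)[\BI]$ and bounds its order, whereas you produce the relevant cusp form directly as the image of the quotient map $\bar\phi$ under the duality; your formulation is if anything a bit cleaner.
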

\begin{proof}
Let $m$ be as in Lemma~\ref{cyclic torsion}.

From the perfect pairing
\[\BT\times S_2(\Gamma_0(p^2),\BZ)\rightarrow\BZ\]
we get a perfect pairing
\[\BT/m\BT\times S_2(\Gamma_0(p^2),\BZ/m\BZ)\rightarrow\BZ/m\BZ\]

Then we have
\[\BT/\BI\times S_2(\Gamma_0(p^2),\BZ/m\BZ)[\BI]\rightarrow\BZ/m\BZ\]

or equivalently
\[\BT/\BI\cong Hom(S_2(\Gamma_0(p^2),\BZ/m\BZ)[\BI],\BZ/m\BZ)\]

and so
\[(\BT/\BI)\otimes\BZ[\frac{1}{p}]\cong Hom(S_2(\Gamma_0(p^2),\BZ[\frac{1}{p}]/m\BZ[\frac{1}{p}])[\BI],\BZ[\frac{1}{p}]/m\BZ[\frac{1}{p}])\]

By the $q$-expansion principle (see \cite{K}), $S_2(\Gamma_0(p^2),\BZ[\frac{1}{p}]/m\BZ[\frac{1}{p}])$ is generated by $c\delta$ for some $c|m$. Then we will have $c\delta=f+mg$ with some $f\in S_2(\Gamma_0(p^2),\BZ[\frac{1}{p}])$ and $g\in M_2(\Gamma_0(p^2),\BZ[\frac{1}{p}])$, then we get $\frac{1}{c}f=\delta-\frac{m}{c}g$ also in $S_2(\Gamma_0(p^2),\BZ[\frac{1}{p}])$. So $\delta\in S_2(\Gamma_0(p^2),\BZ[\frac{1}{p}]/\frac{m}{c}\BZ[\frac{1}{p}])$, and hence $\frac{m}{c}|n$ by Lemma~\ref{delta as cusp form}. So we see $\frac{m}{n}|c$, and hence $\#(S_2(\Gamma_0(p^2),\BZ[\frac{1}{p}]/mZ[\frac{1}{p}])[\BI])\leq n$. This completes the proof.
\end{proof}

As in the prime level case, let $m_q=(q,\BI)$ be a maximal ideal in $\BT$ for any $q|n$. Define the $Eisenstein\ quotient\ of\ level\ p^2\ corresponding\ to\ C_p$ to be
\[\widetilde{J}=J/(\bigcap_{k\geq0}\BI^k)J\]
and the $q$-$Eisenstein\ quotient\ of\ level\ p^2\ corresponding\ to\ C_p$ to be
\[\widetilde{J}^{(q)}=J/(\bigcap_{k\geq0}{m_q}^k)J\]
for any $q\mid n$.

Different from the prime level situation, the Atkin-Lehner involution $w_p$ is not equal to the Hecke operator $T_p$ when the level is $p^2$. So we further define
\[\widetilde{J}^{(q)}_{\pm}=(\widetilde{J}^{(q)})^{w_p\pm1=0}\]

\subsection{Structure of $J[m_q]$}
Let $S=Spec(\BZ[\frac{1}{p}])$ and $G/S$ any finite flat commutative group scheme over $S$. By (\cite{Mz}, Chapter1, P45-46), we have a natural bijection between \{flat closed subgroup schemes $H/S$ of $G/S$\} and \{sub-$G_{\BQ}$ modules of $G(\bar{\BQ})$\}.

An $S$-group scheme $G/S$ of order a power of $q$ ($q$ a rational prime) is called admissible if it is a finite flat group scheme over $S$ and has a filtration of flat closed subgroup schemes $0=G_0\subseteq G_1\subseteq....\subseteq G_n=G$ such that $G_i/G_{i-1}\cong (\BZ/q\BZ)/S$ or $\mu_q/S$. By the above remarks, this is equivalent to say that $G(\bar{\BQ})$ has a filtration of sub-representations with factors isomorphic to $\BZ/q\BZ$ or $\mu_q(\bar{\BQ})$.
\begin{prop}\label{admissible}
For any $q|n$, we have $J[m_q]$ is admissible.
\end{prop}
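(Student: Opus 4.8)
The goal is to show that $J[m_q]$ is admissible as an $S$-group scheme, which by the stated equivalence means exhibiting a filtration of the $G_{\BQ}$-representation $J[m_q](\bar{\BQ})$ whose successive quotients are isomorphic to $\BZ/q\BZ$ or $\mu_q$. The plan is to reduce this to understanding the Galois action on $J[m_q]$, and in particular to exploit the Eisenstein nature of $m_q$: by definition $m_q=(q,\BI)$ with $\BI=(T_p,\{T_l-(1+l)\}_{l\neq p})$, so on $J[m_q]$ each Hecke operator $T_l$ acts as the scalar $1+l$ for $l\neq p$ (and $T_p$ as $0$), exactly matching the eigenvalues of the Eisenstein series $\delta$ modulo $q$.

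First I would establish good reduction away from $p$: since $J=J_0(p^2)$ has good reduction over $S=\Spec\BZ[1/p]$, the group scheme $J[m_q]$ is finite flat over $S$, so the only thing to verify is the existence of the Jordan--Hölder filtration with the prescribed factors. Second, I would pass to the associated mod-$q$ Galois representation $\rho:G_{\BQ}\to\Aut(J[m_q])$ and analyze its Jordan--Hölder factors. The key input is the Eichler--Shimura congruence relation: on the reduction mod $\ell$ (for $\ell\neq p,q$) the Frobenius $\Frob_\ell$ satisfies $\Frob_\ell^2-T_\ell\Frob_\ell+\ell=0$, and since $T_\ell\equiv 1+\ell\pmod{m_q}$ this factors as $(\Frob_\ell-1)(\Frob_\ell-\ell)\equiv0$. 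Thus every Jordan--Hölder factor of $\rho$ is unramified outside $p$ and $q$, with Frobenius eigenvalues among $\{1,\ell\}$; a one-dimensional such factor must be either the trivial character $\BZ/q\BZ$ or the mod-$q$ cyclotomic character $\mu_q$. This is essentially Mazur's argument for prime level, and I would cite the corresponding structure (\cite{Mz}, Chapter II) adapted to level $p^2$.

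The main obstacle will be controlling ramification and reducibility at the primes $p$ and $q$ themselves, since good reduction only gives information away from $p$, and the factor $q$ is where the representation lives. At $q$ one must check the finite-flat condition at the closed fiber is compatible with the filtration having $\mu_q$ and $\BZ/q\BZ$ factors; this is automatic over $S=\Spec\BZ[1/q]$-type arguments but here $S$ only inverts $p$, so I would invoke finite-flatness of $J[m_q]/S$ together with the fact that over $\BZ[1/p]$ the only simple admissible factors are $\BZ/q\BZ$ and $\mu_q$ (Oort--Tate or Mazur's classification). The remaining step is to argue that $\rho$ is reducible, i.e. that the simple factors are genuinely one-dimensional: this follows because $J[m_q]$ is an Eisenstein-type module, so its semisimplification is a sum of powers of the trivial and cyclotomic characters, forcing a filtration of the required form.

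Concretely I would organize the proof as follows. I would first reduce to showing that $J[m_q](\bar\BQ)$ has a $G_{\BQ}$-stable filtration with graded pieces $\BZ/q\BZ$ or $\mu_q$, using the stated bijection between flat closed subgroup schemes over $S$ and sub-$G_{\BQ}$-modules. Then I would use the Eisenstein congruences $T_l\equiv 1+l\pmod{m_q}$ together with Eichler--Shimura to pin down Frobenius eigenvalues, concluding that each simple Jordan--Hölder constituent is $\BZ/q\BZ$ or $\mu_q$. Finally, taking any Jordan--Hölder filtration of the finite group $J[m_q](\bar\BQ)$ and transporting it back to flat closed subgroup schemes via the bijection yields the admissibility. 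The delicate point, which I would handle by following Mazur's treatment closely, is verifying that the filtration realized on Galois modules lifts to one by \emph{flat} closed subgroup schemes over all of $S$, rather than merely over the generic fiber.
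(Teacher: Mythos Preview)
Your approach is essentially the same as the paper's: the paper's entire proof is the sentence ``The proof is the same as in \cite{Mz},'' and your sketch accurately reconstructs the skeleton of Mazur's argument---finite flatness of $J[m_q]$ over $S=\Spec\BZ[1/p]$ from good reduction of $J$ away from $p$, the Eichler--Shimura relation together with the congruence $T_\ell\equiv 1+\ell\pmod{m_q}$ to force $(\Frob_\ell-1)(\Frob_\ell-\ell)=0$, and identification of the Jordan--H\"older constituents with $\BZ/q\BZ$ and $\mu_q$.

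One point of emphasis is misplaced. You single out as ``delicate'' the passage from a filtration of the Galois module $J[m_q](\bar\BQ)$ to a filtration by flat closed subgroup schemes over $S$; but the paper has already recorded, immediately before the proposition, the bijection between flat closed subgroup schemes of a finite flat commutative $S$-group and $G_\BQ$-submodules of its generic fibre. So this lift is automatic and needs no further argument. Conversely, the step you pass over quickly---that the simple Jordan--H\"older factors are one-dimensional---is the one that actually carries content: the relation $(\Frob_\ell-1)(\Frob_\ell-\ell)=0$ constrains eigenvalues but does not by itself rule out an irreducible constituent of dimension $\ge 2$. It is here that one must invoke the Brauer--Nesbitt/\v{C}ebotarev comparison with $\mathbf{1}\oplus\chi_{\cyc}$ (or, as Mazur does, the finite-flat structure at $q$ together with the classification of order-$q$ group schemes), rather than simply asserting that ``Eisenstein-type'' forces the semisimplification to be a sum of characters. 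If you tighten that paragraph and drop the worry about lifting filtrations, your write-up would be a faithful expansion of what the paper is citing.
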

\begin{proof}
The proof is the same as in \cite{Mz}.
\end{proof}

Fix a prime $q|n$, then the rational point $C_p\in J(\BQ)[\BI]$ gives a rational point of $J[m_q]$, hence we have the exact sequence

\[\xymatrix{
  0 \ar[r]& \BZ/q\BZ \ar[r]&J[m_q]\ar[r]& V\ar[r]&0
  }\]

for some $V/S$ which is also admissible.

\begin{lem}\label{multiplicative type}
If $q$ is odd, then $V\cong (\mu_q)^{\bigoplus d}$ for some $d\geq 1$.
\end{lem}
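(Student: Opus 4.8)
The plan is to follow Mazur's analysis of the kernel of an Eisenstein maximal ideal (\cite{Mz}) and adapt it to level $p^2$. Since $J[m_q]$ is admissible by Proposition~\ref{admissible}, its quotient $V$ is again admissible, so every Jordan--H\"older factor of $V$ is isomorphic to $\BZ/q\BZ$ or to $\mu_q$. It therefore suffices to prove that the Galois module $V(\bar\BQ)$ is isomorphic to $(\mu_q)^{\oplus d}$, i.e.\ that $G_\BQ$ acts on $V(\bar\BQ)$ diagonally through the mod-$q$ cyclotomic character $\chi_{\mathrm{cyc}}$; the identification of finite flat group schemes over $\Spec\BZ[\frac1p]$ then follows because a group of multiplicative type with this generic fibre is rigid. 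The oddness of $q$ enters already here, to ensure that $\BZ/q\BZ$ and $\mu_q$ are non-isomorphic simple Galois modules.

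The main device is the Eichler--Shimura congruence relation: for every prime $\ell\nmid pq$ the (arithmetic) Frobenius $\Frob_\ell$ acting on $J[q]$ satisfies $\Frob_\ell^2-T_\ell\,\Frob_\ell+\ell=0$. Reducing modulo $m_q$, where $T_\ell\equiv 1+\ell$, this factors as $(\Frob_\ell-1)(\Frob_\ell-\ell)=0$, so $\Frob_\ell$ acts on $J[m_q]$ with eigenvalues in $\{1,\ell\}$; the eigenvalue $1$ marks the constant factors $\BZ/q\BZ$ and the eigenvalue $\ell=\chi_{\mathrm{cyc}}(\Frob_\ell)$ marks the multiplicative factors $\mu_q$. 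Choosing $\ell$ with $\ell\not\equiv 1\pmod q$ — possible since $q$ is odd — the two eigenvalues are distinct, so $\Frob_\ell$ is semisimple and $J[m_q]=\ker(\Frob_\ell-1)\oplus\ker(\Frob_\ell-\ell)$. The cuspidal rational point $C_p$, being $G_\BQ$-fixed, lies in $\ker(\Frob_\ell-1)$. If one shows that $\ker(\Frob_\ell-1)$ is \emph{exactly} the cuspidal line $\BZ/q\BZ$, then $V\cong\ker(\Frob_\ell-\ell)$, on which every $\Frob_{\ell'}$ ($\ell'\nmid pq$) must act by $\ell'$; by the Chebotarev density theorem $G_\BQ$ then acts on $V(\bar\BQ)$ through $\chi_{\mathrm{cyc}}$, giving $V\cong(\mu_q)^{\oplus d}$.

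Thus the decisive step, and the one I expect to be the main obstacle, is the multiplicity-one statement that the Frobenius-fixed (\'etale) part of $J[m_q]$ is one-dimensional and generated by $C_p$. This is the $p^2$-analogue of Mazur's multiplicity-one theorem, and I would establish it by his method: the perfect Hecke pairing of Theorem~\ref{index} together with the $q$-expansion principle identifies the $m_q$-part with a space of weight-two mod-$q$ forms on $\Gamma_0(p^2)$ annihilated by $\BI$, and bounds the dimension of its Frobenius-fixed line by one (here $q\neq p$ and the structure of $\BT/m_q$ are used). Finally $d\geq1$ because $J[m_q]$ cannot be purely \'etale: under the non-degenerate Weil pairing the constant cuspidal subgroup is paired with the multiplicative Shimura subgroup, which therefore contributes at least one factor $\mu_q$ to $J[m_q]$. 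The delicate points to monitor are the behaviour at the bad prime $p$, where one must invoke that $V$ is unramified outside $p$ and finite flat in order to pass from the Galois-module isomorphism to the scheme-theoretic one, and the exact multiplicity-one count, which in Mazur's prime-level argument rests on $q\neq p$ and on the explicit description of the Hecke module modulo $m_q$.
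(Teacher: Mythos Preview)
Your overall strategy---Eichler--Shimura plus a multiplicity-one bound coming from the $q$-expansion principle---is exactly what the paper does, only organised a little differently. The paper first shows that $V$ contains no copy of $\BZ/q\BZ$ (otherwise pulling back would embed $(\BZ/q\BZ)^{\oplus 2}$ into $J[m_q]$, and after reduction modulo $q$ this contradicts the one-dimensionality of $H^0(X_{/\BF_q},\Omega)[m_q]$), and then uses Eichler--Shimura together with \cite[Chapter~I, Lemma~3.5]{Mz} to identify the resulting $\mu_q$-filtered scheme with $(\mu_q)^{\oplus d}$. Your version, bounding $\ker(\Frob_\ell-1)$ directly for each $\ell\not\equiv 1\pmod q$ and then applying Chebotarev, is an equivalent repackaging of the same ingredients.

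There is, however, a genuine gap in your argument for $d\geq 1$. You claim that the Weil pairing matches the cuspidal $\BZ/q\BZ$ with the Shimura subgroup, so that a copy of $\mu_q$ coming from the Shimura subgroup sits inside $J[m_q]$. In level $p^2$ this fails: by \cite{LO} the operator $T_p$ acts on the Shimura subgroup $\Sigma$ as multiplication by $p$, whereas $T_p\equiv 0\pmod{m_q}$ for the Eisenstein ideal under consideration (and $p\not\equiv 0\pmod q$ since $q\mid n=(p^2-1)/24$). Hence $\Sigma[q]$ is \emph{not} contained in $J[m_q]$. This is not a minor point: the paper uses precisely this incompatibility in the very next result (Theorem~\ref{structure}) to show that the extension $0\to\BZ/q\BZ\to J[m_q]\to\mu_q\to 0$ is non-split. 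For the lower bound the paper instead appeals to \cite[Chapter~II, Lemma~7.7]{Mz}, which gives $\dim_{\BF_q} J[m_q](\bar\BQ)\geq 2$ for any maximal ideal $m_q$ of residue characteristic $q$; you should replace your Shimura-subgroup argument by this (or by an equivalent Tate-module/Nakayama argument).
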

\begin{proof}
First, we show $V$ is of multiplicative type, i.e. there's no embedding of $\BZ/q\BZ$ in $V$.

Suppose that there is an embedding $\BZ/q\BZ\rightarrow V$. Let $G$ be the pull back of this constant group scheme. Then there is an embedding of $(\BZ/q\BZ)^{\bigoplus2}$ in $J[m_q]$. By reduction to $\BF_q$, we get
\[dim_{\BF_q}(J/{\BF_q})[I](\bar{\BF_q})\geq2\]
which is impossible by the $q$-expansion principle as there is an injection $(J/{\BF_q})[I](\bar{\BF_q})\rightarrow H^0(X/{\BF_q},\Omega)$. This show that $V$ is of multiplicative type.

By Eichler-Shimura, for any $l\nmid pq$, the Frobenius at $l$ acting on $V$ has eigenvalue $1$ or $l$. But as $q$ is odd, $Frob_l$ can not have eigenvalue  $1$ on $\mu_q$, So all the eigenvalues are $l$. Then by (\cite{Mz}, Chapter1, Lemma3.5), we have $V\cong (\mu_q)^{\bigoplus d}$ for some $d$.

By (\cite{Mz}, Chapter2, Lemma7.7), we deduce that $dim_{\BF_q}J[m_q](\bar{\BQ})\geq2$, so $d\geq1$.
\end{proof}

\begin{thm}\label{structure}
If $(q,6)=1$, then there is a non-split exact sequence

\[\xymatrix{
  0 \ar[r]& \BZ/q\BZ \ar[r]&J[m_q]\ar[r]& \mu_q\ar[r]&0
  }\]

which determines the structure of $J[m_q]$ as the unique (up to scale) non-trivial element in $Ext^1_S(\mu_q,\BZ/q\BZ)$.
\end{thm}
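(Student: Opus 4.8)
The plan is to prove the three things the statement bundles together: that the admissible quotient $V$ of Lemma~\ref{multiplicative type} is exactly $\mu_q$ (the integer $d$ there equals $1$), that the resulting extension is non-split, and that $\mathrm{Ext}^1_S(\mu_q,\BZ/q\BZ)$ is cyclic of order $q$, so that a non-split class is forced to be the unique non-trivial one up to scaling.

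I would begin with the purely local computation of the ambient group. Cartier duality identifies an extension of $\mu_q$ by $\BZ/q\BZ$ over $S=\Spec\BZ[\tfrac1p]$ with an extension of $\BZ/q\BZ$ by $\mu_q$, so $\mathrm{Ext}^1_S(\mu_q,\BZ/q\BZ)\cong H^1_{\mathrm{fppf}}(S,\mu_q)\cong\BZ[\tfrac1p]^\times\otimes\BZ/q\BZ$. Because $q$ is odd, $-1$ is a $q$-th power and $\Cl(\BZ[\tfrac1p])=0$, so this group is cyclic of order $q$, generated by the class of $p$. This settles the uniqueness-up-to-scale assertion and, crucially, translates ``non-split'' into ``ramified at $p$''.

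For $d=1$ I would argue as follows. The connected--\'etale sequence over $\BZ_q$ forces $J[m_q]^{0}=\mu_q^{d}$ and $J[m_q]^{\et}=\BZ/q\BZ$ (the cuspidal line), so the multiplicative part has $\BF_q$-rank $d$. Via the $\mathrm{dlog}$ map the multiplicative part embeds into $H^0(X_{\BF_q},\Omega^1)$, and since it is killed by $m_q$ its image lies in $H^0(X_{\BF_q},\Omega^1)[m_q]$; as in the proof of Lemma~\ref{multiplicative type} the $q$-expansion principle identifies this with the mod $q$ weight-two forms annihilated by the Eisenstein ideal, a space spanned by the reduction of $\delta$. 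The delicate point is to see that this eigenspace is exactly one-dimensional: at level $p^2$ the operators $T_{p,*}$ and $T_p^{*}$ of Proposition~\ref{two kinds of Hecke operator} no longer coincide (unlike the prime-level case $w_p=-T_p$), so I would run the multiplicity-one argument after decomposing under the Atkin--Lehner involution $w_p$, using that $w_p$ interchanges $T_{p,*}$ and $T_p^{*}$ and hence carries the Rosati-adjoint ideal $m_q^{*}$ back to $m_q$; combined with the Weil pairing this also yields the Galois isomorphism $J[m_q]^{\vee}\cong J[q]/m_qJ[q]$. This gives $d\le1$, and with $d\ge1$ from Lemma~\ref{multiplicative type} we conclude $d=1$ and the exact sequence $0\to\BZ/q\BZ\to J[m_q]\to\mu_q\to0$.

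It then remains to prove non-splitness, which by the first step is equivalent to ramification at $p$. Were the sequence split, $J[m_q]\cong\BZ/q\BZ\oplus\mu_q$ would be unramified at $p$ as well as away from $p$, hence would extend to a finite flat group scheme over $\Spec\BZ$; but $\mathrm{Ext}^1_{\Spec\BZ}(\mu_q,\BZ/q\BZ)\cong\BZ^\times\otimes\BZ/q\BZ=0$ for odd $q$, so it would be globally $\BZ/q\BZ\oplus\mu_q$. To exclude this I would compute the extension class directly at $p$: the multiplicative line comes from the toric part of the bad reduction of $J_0(p^2)$, and its gluing to the \'etale cuspidal line is measured by a Tate period which, from the explicit $q$-expansions of $\delta$ at the cusps $[0]$ and $[\tfrac{i}{p}]$ computed before Lemma~\ref{delta as cusp form} (whose denominators are precisely powers of $p$), reduces to the class of $p$ in $\BZ[\tfrac1p]^\times\otimes\BZ/q\BZ$. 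Since that class is non-zero, the extension is non-split and is the unique non-trivial element of $\mathrm{Ext}^1_S(\mu_q,\BZ/q\BZ)$ up to scaling. I expect the genuine obstacles to be the level-$p^2$ multiplicity-one statement (the one-dimensionality of $H^0(X_{\BF_q},\Omega^1)[m_q]$, where the splitting of $T_{p,*}$ from $T_p^{*}$ must be controlled) and the explicit identification of the local extension class at $p$ with the class of $p$.
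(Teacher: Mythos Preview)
Your route differs substantially from the paper's, and both of your main steps run into obstacles that the paper's argument sidesteps. The paper also uses that $\mathrm{Ext}^1_S(\mu_q,\BZ/q\BZ)$ is one-dimensional (quoting Brumer--Kramer rather than computing it), but then proves $d=1$ and non-splitness simultaneously by a single contradiction: either failure would produce an embedding $\mu_q\hookrightarrow J[m_q]$ (if $d\ge 2$, pull the extension back along two coordinate inclusions $\mu_q\hookrightarrow\mu_q^{\oplus 2}\subset\mu_q^{\oplus d}$; the two resulting classes in the one-dimensional $\mathrm{Ext}^1$ are proportional, so some combination splits off a $\mu_q$; and if $d=1$ but the sequence splits, the embedding is immediate). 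One then invokes Vatsal's theorem that every copy of $\mu_q$ inside $J_0(N)$ lies in the Shimura subgroup $\Sigma$, together with Ling--Oesterl\'e's computation that $T_p$ acts on $\Sigma$ as multiplication by $p$. Since $T_p\equiv 0$ on $J[m_q]$ and $p\not\equiv 0\pmod q$, no such $\mu_q$ can exist.

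In your approach, the $d=1$ step has a gap beyond the $T_{p,*}$-versus-$T_p^{*}$ issue you flag: the tangent map sends $\Lie(\mu_q^d)$ into $H^1(X_{\BF_q},\CO)[m_q]$, which under Serre duality is $\bigl(H^0(X_{\BF_q},\Omega^1)/m_q\bigr)^\vee$, not $H^0(X_{\BF_q},\Omega^1)[m_q]$. The $q$-expansion principle bounds the latter by $1$ (it is spanned by $\delta\bmod q$), but bounding the former by $1$ amounts to showing that $\BT_{m_q}$ is Gorenstein, which the paper does not establish at level $p^2$. Your non-splitness step is also more fragile than it appears: $J_0(p^2)$ does not have semistable reduction at $p$, so there is no direct ``toric part plus Tate period'' description of the N\'eron model, and extracting the extension class from the constant terms of $\delta$ at the level-$p$ cusps would require substantial additional work. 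The paper's Shimura-subgroup argument avoids both difficulties and handles $d=1$ and non-splitness in one stroke.
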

\begin{proof}
Suppose the $d$ in \~ref{multiplicative type}is strictly larger than $2$, then we will have a group scheme $G/S$ and an exact sequence

\[\xymatrix{
  0 \ar[r]& \BZ/q\BZ \ar[r]&G\ar[r]& (\mu_q)^{\bigoplus2}\ar[r]&0
  }\]

Let $f_i:\mu_q\rightarrow (\mu_q)^{\bigoplus2}$ ($i=1,2$) be the two embedding. The pull backs will give us two elements in $Ext^1_S(\mu_q,\BZ/q\BZ)$. But by (\cite{BK}, Proposition4.2.1), $dim_{\BF_q}Ext^1_S(\mu_q,\BZ/q\BZ)=1$ (as $(q,6)=1$ and $p=\pm1\pmod q$). So some linear combination of $f_1$ and $f_2$ will gives a sub-group scheme $H$ of $J[m_q]$ which is isomorphic to $\BZ/q\BZ\bigoplus\mu_q$. In particular, there is an embedding of $\mu_q$ in $J[m_q]$.

By \cite{V}, we should have this $\mu_q$ contained in $\sum$-the Shimura subgroup of $J$. But by \cite{LO}, $T_p$ acts on $\sum$ as multiplication by $p$, which contradicts that $\mu_q\subseteq J[m_q]$ which is annihilated by $T_p$.
\end{proof}

Remark:This theorem implies that the  Galois representation given by the action of $G_{\BQ}$ is a two dimensional mod $q$ reducible Galois representation, which is not semisimple.

\subsection{Gross curves and the $2$-Eisenstein quotient}
Recall that for $p$ be a prime which is congruent to $3$ modulo $4$, there is a unique isogeny class of $\BQ$-curves
over $H$ with CM by $\CO$ and the associated character $\phi$.

Let $f_{\phi}(z)=\sum_{(\mathfrak{a},p)=1}\phi(\mathfrak{a})\cdot e^{2\pi i\cdot N_{K/\BQ}(\mathfrak{a})\cdot z}=\sum_{n\geq1}a_n q^n$ ($z\in\mathcal{H},q=e^{2\pi iz}$). By Lemma3 of \cite{Sh1}, $f_{\phi}(z)$ is an eigenform  in $ S_2(\Gamma_0(p^2))$. Let $T$ be the field generated by the image of $\phi$. Then $T$ is a CM field with $T^{+}=\BQ(\{a_n\})$. By theorem7.14 and theorem7.15 of \cite{Sh2}, there is a sub-abelian variety $i:A\rightarrow J$ over $\BQ$ and an embedding $\theta: T^{+}\rightarrow End_{\BQ}(A)$, such that $T_n|_A=\theta(a_n)$ for any $n$, where $T_n$ is the Hecke operator.

\begin{prop}\label{Q-curve}
There is a Gross curve such that $Res_{F/\BQ}E\cong A$, where $F=\BQ(j(E))$.
\end{prop}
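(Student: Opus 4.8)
The plan is to identify the two abelian varieties $\Res_{F/\BQ}E$ and $A$ over $\BQ$ by matching their $\ell$-adic Galois representations (equivalently their $L$-functions) and then to promote the resulting isogeny to an isomorphism using the freedom hidden in the phrase ``there is a Gross curve''. Throughout I take $E$ in its isogeny class to be defined over $F=\BQ(j(E))$, and I use that $H=FK$ is a quadratic extension of $F$ with $[F:\BQ]=[H:K]=h_K$; in particular $\dim\Res_{F/\BQ}E=h_K$, and since $E$ is a $\BQ$-curve the variety $B:=\Res_{F/\BQ}E$ carries extra endomorphisms, among which I expect to find a copy of the totally real field $T^+$.

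First I would compute the Galois representation attached to $B$. Weil restriction gives $L(B/\BQ,s)=L(E/F,s)$, so it is enough to understand $V_\ell(E)$ as a $G_F$-module. Over $H$ the curve has CM by $\CO$ with associated Hecke character $\chi=\phi\circ N^H_K$, hence $V_\ell(E)|_{G_H}\cong\chi_\ell\oplus\bar\chi_\ell$. Because $E$ acquires CM only over the quadratic extension $H/F$ and $\chi_\ell\neq\bar\chi_\ell$, the representation of $G_F$ is induced, $V_\ell(E)\cong\Ind_{G_H}^{G_F}\chi_\ell$, and by inductivity of $L$-functions $L(E/F,s)=L(H,\chi,s)$ is the Hecke $L$-function of $\chi$ over $H$.

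Next I would exploit that $\chi=\phi\circ N^H_K$ factors through the norm of the abelian extension $H/K$ with group $Cl(K)$. The norm relation for abelian extensions then yields
\[ L(H,\chi,s)=\prod_{\eta\in\widehat{Cl(K)}}L(K,\phi\cdot\tilde\eta,s), \]
a product of $h_K$ factors, where $\tilde\eta$ is the idele class character of $\eta$. Each $\phi\tilde\eta$ has the same infinity type and conductor dividing $(\sqrt{-p})$ as $\phi$, so (via the level computation $|d_K|\,N_{K/\BQ}(\sqrt{-p})=p\cdot p=p^2$) it produces a CM newform $f_{\phi\tilde\eta}\in S_2(\Gamma_0(p^2))$ with $L(f_{\phi\tilde\eta},s)=L(K,\phi\tilde\eta,s)$. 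A short computation on $K^\times$ (the characters $\phi$ and $\phi\circ c$ differ by $\alpha\mapsto\alpha/\bar\alpha$, which is nontrivial) shows the $h_K$ forms $f_{\phi\tilde\eta}$ are pairwise distinct, and by the uniqueness up to $\widehat{Cl(K)}$ in Theorem~\ref{construction} the set $\{f_{\phi\tilde\eta}\}_\eta$ is stable and forms a single orbit under the action of $\Gal(\bar\BQ/\BQ)$ on Fourier coefficients, i.e.\ it is exactly the Galois orbit of $f_\phi$. Consequently $[T^+:\BQ]=h_K=\dim B$ and
\[ L(B/\BQ,s)=\prod_{\eta}L(f_{\phi\tilde\eta},s)=\prod_{\sigma\in\Gal(T^+/\BQ)}L(f_\phi^\sigma,s)=L(A/\BQ,s). \]

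With the $L$-functions matched, Faltings' isogeny theorem gives a $\BQ$-isogeny $A\sim B$. The remaining, and hardest, step is to upgrade this to an isomorphism, which is where I would use the latitude in choosing the Gross curve: within the single $H$-isogeny class of Gross curves (and among their quadratic twists) there is enough room to fix the model of $E$, say the distinguished curve of Lemma~\ref{Lem4} or a suitable twist of it. I would then transport the $T^+$-action that $B$ inherits from the $\BQ$-curve structure and that $A$ carries via $\theta$ across the isogeny, and verify that it becomes a $T^+$-linear isogeny whose degree is forced to be trivial, so that it is an isomorphism; alternatively one compares N\'eron models away from $p$ (both have good reduction outside $p$) together with the modular parametrization realizing $A$ as the $f_\phi$-part of $J$. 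I expect this endomorphism-matching to be the main obstacle: ruling out a nontrivial $T^+$-linear isogeny for the correct choice of Gross curve, and thereby obtaining an honest isomorphism $\Res_{F/\BQ}E\cong A$ rather than mere isogeny.
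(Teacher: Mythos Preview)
Your route to the isogeny between $A$ and $\Res_{F/\BQ}E'$ via $L$-function matching and Faltings is valid, but heavier than necessary. The paper instead quotes Shimura's theorem (from \cite{Sh1}) that the CM factor $A$ of $J_0(p^2)$ attached to $f_\phi$ is already $H$-isogenous to $(E')^{\oplus h_K}$ for some elliptic curve $E'$ with CM by $\CO$; this immediately forces $E'$ to be a $\BQ$-curve, and then simplicity of $\prod_\sigma(E')^\sigma$ over $\BQ$ turns an obvious nonzero $\BQ$-morphism between $\prod_\sigma(E')^\sigma$ and $A$ into a $\BQ$-isogeny, with no appeal to Faltings. The $L$-function comparison then appears only at the very end, as a one-line check that $\chi_E=\chi$ (so that $E$ really is a Gross curve), rather than as the engine of the argument.

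On the passage from isogeny to isomorphism you have correctly located the difficulty, but your proposed mechanism will not work: there are plenty of nontrivial $T^+$-linear isogenies between abelian varieties with real multiplication by $T^+$ (any nonunit integer of $T^+$ already gives one as a self-isogeny), so $T^+$-linearity cannot force an isogeny to be an isomorphism, and the suggestion to adjust by quadratic twists is a red herring since by Lemma~\ref{Lem1} twisting changes $\chi_E$ and hence leaves the Gross isogeny class. The paper's maneuver is to reverse the logic. Rather than fixing $E$ first and trying to hit $A$, it starts from $A$: once one has the $\BQ$-isogeny $\psi$ between $\prod_\sigma(E')^\sigma$ and $A$, one passes to the quotient by $\ker\psi$ and identifies this quotient as $\prod_\sigma E^\sigma=\Res_{F/\BQ}E$ for some $E$ isogenous to $E'$. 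The freedom in the isogeny class is thus used to \emph{define} $E$ from $A$, not to guess $E$ in advance and then try to match.
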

\begin{proof}
By Theorem1 of \cite{Sh1}, $A$ is isogenous to $E'^{\bigoplus h}$ for some elliptic curve with CM by $\CO$. Then we have also $A$ isogenous to $(E'^{\sigma})^{\bigoplus h}$ for any $\sigma\in Gal(H/\BQ)$. So $E'$ is isogenous to $E'^{\sigma}$ for any $\sigma\in Gal(H/\BQ)$, i.e. $E'$ is a $\BQ$-curve (note that $\overline{\BQ}$-isogeny is automatically $H$-isogeny).

It is clear that there is a $\BQ$-morphism between $\prod_{\sigma}(E')^{\sigma}$ and $A$. Because $\prod_{\sigma}(E')^{\sigma}$ is simple over $\BQ$, this morphism must be an isogeny. Then, modulo the kernel, we find an $E$ such that $Res_{F/\BQ}E=\prod_{\sigma}(E)^{\sigma}\cong A$.

As $L(E/F,s)=L(s,\chi_E)=L(s,A/\BQ)=\prod_{\tau}L(s,f^{\tau})=L(s,\chi)$ (up to finite Euler factors), we have $\chi_E=\chi$.
\end{proof}

\begin{prop}\label{Gross curve and 2-Eisenstein}
Notations as above. If $p=7\pmod8$, then $A\hookrightarrow\widetilde{J}^{(2)}_{+}$. In particular, there is a non-trivial morphism $E\rightarrow\widetilde{J}^{(2)}_{+}$ for any Gross curve $E$.
\end{prop}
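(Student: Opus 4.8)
The plan is to establish the inclusion in two stages: first that $A$ maps nontrivially---hence, being $\BQ$-simple, injects---into the full $2$-Eisenstein quotient $\widetilde{J}^{(2)}$, and then that the Atkin-Lehner involution $w_p$ acts on $A$ as $-1$, so that the image lands in the eigenpart $\widetilde{J}^{(2)}_{+}=(\widetilde{J}^{(2)})^{w_p+1=0}$. The first stage is an Eisenstein congruence for $f_\phi$ modulo a prime above $2$, and the second is a root-number computation.

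For the first stage I would exploit the hypothesis $p\equiv7\pmod8$, which gives $\left(\frac{2}{p}\right)=1$, so that by Theorem~\ref{summary}(1) the Gross curve $E$ satisfies $E(F)_{\tor}\cong\BZ/2\BZ$ with $F=\BQ(j(E))$. By Proposition~\ref{Q-curve} we have $A\cong\Res_{F/\BQ}E$, and the universal property of Weil restriction gives $A(\BQ)=E(F)$; hence $A$ carries a nonzero $\BQ$-rational point of order $2$. Viewing $A(\BQ)[2]$ as a nonzero module over $\BT/2\BT$, there is a maximal ideal $\fm\mid2$ of $\BT$ in the support of $A$ with $A(\BQ)[\fm]\neq0$, so the representation $A[\fm]$ contains the trivial character as a $G_\BQ$-subrepresentation. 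For $l\nmid2p$ the Eichler-Shimura congruence relation says that $\Frob_l$ acts on $A[\fm]$ with characteristic polynomial $x^2-T_lx+l$, and the trivial eigenvalue $1$ then forces $T_l\equiv1+l\pmod\fm$. Moreover $T_p$ annihilates $A$, since $f_\phi$ omits the unique ideal $(\sqrt{-p})$ of norm $p$ and so $a_p=0$; hence $T_p\in\fm$. Consequently $\BI\subseteq\fm$, which forces $\fm=m_2$ (note $2\mid n$ because $p\equiv7\pmod8$ gives $8\mid p+1$, so $m_2$ is a genuine maximal ideal). Therefore $A\to\widetilde{J}^{(2)}$ is nonzero, and $\BQ$-simplicity of $A$ (Proposition~\ref{Q-curve}) upgrades this to an embedding $A\hookrightarrow\widetilde{J}^{(2)}$.

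For the second stage I would determine the $w_p$-eigenvalue $\varepsilon_p=\pm1$ of $f_\phi$ from the sign of the functional equation. In weight $2$ and level $p^2$ the Fricke involution is $w_p$, and the global root number of $f_\phi$ equals $-\varepsilon_p$. Using $L(E/F,s)=L(A/\BQ,s)=\prod_\tau L(f_\phi^\tau,s)$ from Proposition~\ref{Q-curve}, together with the fact that Galois-conjugate newforms carry the same (rational) Atkin-Lehner sign, the root number of $L(E/F,s)$ equals $(-\varepsilon_p)^{h}$; since $h=h_K$ is odd by genus theory this is just $-\varepsilon_p$. By Theorem~\ref{summary}(2) this sign equals $\left(\frac{2}{p}\right)=1$, so $\varepsilon_p=-1$. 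Hence $w_p$ acts as $-1$ on $A$, the image of $A$ in $\widetilde{J}^{(2)}$ lies in the part where $w_p=-1$, namely $\widetilde{J}^{(2)}_{+}$, and $A\hookrightarrow\widetilde{J}^{(2)}_{+}$. The final assertion follows because $E$ is a simple factor of $A_{\bar\BQ}\cong\prod_\sigma E^\sigma$, so the induced map $E\to(\widetilde{J}^{(2)}_{+})_{\bar\BQ}$ is nonzero; every Gross curve, being isogenous to $E$, inherits such a nontrivial morphism.

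The hard part will be the sign bookkeeping in the second stage: correctly normalizing the chain relating the $\epsilon$-factor of Theorem~\ref{summary}(2), the global root number, and the local Atkin-Lehner eigenvalue of the ramified CM form $f_\phi$ at $p$. An off-by-one in this normalization would place $A$ in $\widetilde{J}^{(2)}_{-}$ instead, so this step must be checked against an independent computation of the local root number at $p$. A secondary point requiring care is verifying that the trivial $2$-torsion line is detected precisely by $m_2$ rather than by some other maximal ideal above $2$; this is exactly what the combination of the Eichler-Shimura relation with $a_p=0$ secures.
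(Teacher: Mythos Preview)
Your proposal is correct and follows essentially the same route as the paper. The paper's own proof is a one-line citation of the two key inputs from \cite{G3}---that $A(\BQ)_{\tor}\simeq\BZ/2\BZ$ and that the $\epsilon$-factor equals $1$ when $p\equiv7\pmod8$---while you spell out in full how these facts yield the conclusion; your Eichler--Shimura argument that the rational $2$-torsion point forces $T_l\equiv 1+l\pmod{\fm}$ (hence $\fm=m_2$) is exactly what the paper later records as Proposition~\ref{Key}, and your root-number computation unwinds what the paper leaves implicit in the phrase ``$\epsilon$-factor is $1$''. One small wording fix: Eichler--Shimura gives the operator identity $\Frob_l^2-T_l\Frob_l+l=0$ on $A[\fm]$ rather than a statement about the characteristic polynomial, but the deduction you draw from it (evaluate on a $\BQ$-rational vector) is unchanged.
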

\begin{proof}
This is because $A(\BQ)_{tor}\simeq\BZ/2\BZ$ and the $\epsilon$-factor of its L-function is $1$, when $p=7\pmod8$ (see \cite{G3}).
\end{proof}

\subsection{Heegner point on $\widetilde{J}^{(q)}$ for odd $q$}
\begin{lem}
Let $\eta$ be the Dedkind $\eta$-function and $f=\frac{\eta(pz)^{p+1}}{\eta(z)\eta(p^2z)^p}$. Then $f$ is a rational function on $X$ defined over $\BQ$, and $div(f)=n(P_p-(p-1)[\infty])$.
\end{lem}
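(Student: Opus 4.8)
The plan is to read off the exponents of the given $\eta$-product, verify the four rationality conditions of Proposition~\ref{Dedkind} to conclude $f\in\BQ(X)$, and then compute $\div(f)$ cusp by cusp using Ligozat's order-of-vanishing formula for $\eta$-products (\cite{Li}). Here $N=p^2$ has divisors $1,p,p^2$, and writing $f=\prod_{d\mid p^2}\eta_d^{r_d}$ we have $(r_1,r_p,r_{p^2})=(-1,\,p+1,\,-p)$; throughout I use $n=\frac{p^2-1}{24}$ and that $p>3$ is coprime to $6$, so that $24\mid p^2-1$.

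For rationality I would check the four conditions of Proposition~\ref{Dedkind} directly. Condition (1) is $\sum_d r_d=-1+(p+1)-p=0$. For condition (2), $\sum_d d\,r_d=-1+p(p+1)-p^3=-(p-1)^2(p+1)=-(p-1)(p^2-1)$, which is divisible by $24$ since $24\mid p^2-1$. Condition (3) is even stronger: $\sum_d \frac{N}{d}r_d=-p^2+p(p+1)-p=0$. Finally, for condition (4), $\prod_d d^{r_d}=p^{(p+1)-2p}=p^{1-p}$ is a square in $\BQ^\times$ because $p$ is odd, so $1-p$ is even. Hence $f\in\BQ(X)$, i.e.\ $f$ is a rational function on $X$ defined over $\BQ$.

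For the divisor, I would apply the formula giving the order of $\prod_d\eta_d^{r_d}$ at the cusp $a/c$ with $c\mid N$, namely $\frac{N}{24}\sum_{d\mid N}\frac{\gcd(c,d)^2\, r_d}{\gcd(c,N/c)\,c\,d}$. The three denominators $c=1,p,p^2$ give the cusps $[0]$, the level-$p$ cusps $[\frac{i}{p}]$, and $[\infty]$ respectively (the last identification is confirmed by the $q$-expansion at $\infty$, where the order equals $\frac{1}{24}\sum_d d\,r_d=-n(p-1)$). A short computation gives order $0$ at $c=1$, order $\frac{1}{24}(p^2-1)=n$ at each of the $p-1$ cusps $[\frac{i}{p}]$, and order $-n(p-1)$ at $[\infty]$; the total degree is $n(p-1)-n(p-1)=0$, as it must be. Therefore $\div(f)=n\bigl(\sum_{i=1}^{p-1}[\frac{i}{p}]-(p-1)[\infty]\bigr)=n(P_p-(p-1)[\infty])$, where $P_p$ denotes the level-$p$ cuspidal divisor $\sum_{i=1}^{p-1}[\frac{i}{p}]$.

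The only delicate point is the order at the level-$p$ cusps $[\frac{i}{p}]$: there $\gcd(p^2,p)=p\neq p^2$, so the $\eta(p^2z)$ factor does not contribute its full ``$\infty$-order'', and one must track the width of the cusp correctly. Rather than expand $f$ by hand at each $a/p$ via the $\eta$-transformation law, I would invoke Ligozat's closed formula (\cite{Li}); as an independent check, the $p-1$ cusps of denominator $p$ form a single $\Gal(\BQ(\mu_p)/\BQ)$-orbit and hence share one common order, so that order is already pinned down by the degree-zero relation once the orders at $[0]$ and $[\infty]$ are known. This is also consistent with Proposition~\ref{eta quotient} applied to the divisor $P_p-(p-1)[\infty]$, whose associated cuspidal point has order exactly $n$.
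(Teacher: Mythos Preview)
Your proposal is correct and is precisely the computation the paper has in mind: the paper's own ``proof'' is the single line ``compute by $\eta$-quotient theory,'' i.e.\ invoke Ligozat's criteria and order formula exactly as you do. Your verification of conditions (1)--(4) and the cusp-by-cusp order calculation (yielding $0$, $n$, and $-n(p-1)$ at $[0]$, $[\tfrac{i}{p}]$, and $[\infty]$ respectively) are all accurate, so there is nothing to add.
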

\begin{proof}
compute by $\eta$-quotient theory.
\end{proof}

As in \cite{G}, for any field $F$, we can define a homomorphism

$\delta_F:J(F)\rightarrow F^{\times}\bigotimes \BZ/n\BZ$

which sending any $\sum a_i[x_i]\in Div^0(X)(F)$ disjoint from $P_p-(p-1)[\infty]$ to $\prod f(x_i)^{a_i}$.

Recall that the action of $\mathbb{T}$ on $C_p$ gives a homomorphism $\BT/\BI\rightarrow \BZ/n\BZ$. We will view $\BZ/n\BZ$ as a $\BT$-module in this way.

\begin{lem}\label{Hecke algebra homomorphism}
For any field $F$, the map $\delta_F$ is a $\BT$-module homomorphism.
\end{lem}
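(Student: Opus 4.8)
The plan is to check $\BT$-linearity on the ring generators $T_\ell$ of $\BT$, reducing each case to a comparison of divisors of modular units. Recall that $\div(f)=n\,C_p$ with $C_p=\sum_{i=1}^{p-1}[\tfrac{i}{p}]-(p-1)[\infty]$, and that for a degree-zero divisor $D=\sum a_i(x_i)$ prime to the cusps one has $\delta_F(D)=\prod_i f(x_i)^{a_i}$, i.e.\ the evaluation $f(D)$. Since the Hecke operator on $J$ is the pushforward $T_{\ell,*}=\beta_{\ell,*}\alpha_\ell^{*}$, the projection formula for $\beta_{\ell,*}$ together with taking the norm of $\beta_\ell^{*}f$ along $\alpha_\ell$ gives
\[
\delta_F(T_{\ell,*}D)=f\bigl(\beta_{\ell,*}\alpha_\ell^{*}D\bigr)=(\beta_\ell^{*}f)\bigl(\alpha_\ell^{*}D\bigr)=\bigl(\alpha_{\ell,*}\beta_\ell^{*}f\bigr)(D).
\]
Thus the Hecke operator on the source becomes the transpose operator $T_\ell^{*}f:=\alpha_{\ell,*}\beta_\ell^{*}f$ acting on $f$, a function whose divisor is $\div(T_\ell^{*}f)=T_\ell^{*}(\div f)=n\,T_\ell^{*}(C_p)$.

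The decisive observation is that, writing $e_\ell$ for the scalar by which $T_{\ell,*}$ acts on $C_p$ (so $e_\ell=1+\ell$ for $\ell\neq p$ and $e_p=0$, which is exactly how $T_\ell$ acts on $\BZ/n\BZ$ through $\BT/\BI$), the ratio $T_\ell^{*}f/f^{e_\ell}$ has divisor $n\bigl(T_\ell^{*}(C_p)-e_\ell\,C_p\bigr)$. The class of $T_\ell^{*}(C_p)-e_\ell C_p$ in $J=\Pic^{0}(X)$ is $T_\ell^{*}C_p-e_\ell C_p$, so as soon as $T_\ell^{*}C_p=e_\ell C_p$ this divisor is principal; then $T_\ell^{*}f/f^{e_\ell}$ is an $n$-th power times a constant, and evaluating on the degree-zero divisor $D$ lands in $(F^{\times})^{n}$. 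Hence
\[
\delta_F(T_{\ell,*}D)\equiv \delta_F(D)^{e_\ell}\pmod{(F^{\times})^{n}},
\]
which is the asserted equivariance. So the entire statement reduces to the single identity $T_\ell^{*}C_p=T_{\ell,*}C_p$ for every $\ell$ — precisely the hypothesis of the general criterion of Section~3.2.

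For $\ell\neq p$ this is automatic: since $\ell\nmid p^{2}$, Proposition~\ref{two kinds of Hecke operator} gives $T_\ell^{*}=T_{\ell,*}$ as operators, so both sides equal $(1+\ell)C_p$. The remaining case $\ell=p$ is the crux and the step I expect to be the main obstacle. Here $T_{p,*}C_p=0$ by the definition of the Eisenstein ideal $\BI$, so I must show $T_p^{*}C_p=0$ in $J$ as well. To establish this I would compute the action of $T_p^{*}$ on the cusps directly, using the coset representatives $T_p^{*}=\sum_{k=0}^{p-1}\begin{pmatrix} p & 0\\ p^{2}k & 1\end{pmatrix}$ from Proposition~\ref{two kinds of Hecke operator}: one works out the images of $[0]$, $[\infty]$ and the level-$p$ cusps $[\tfrac{i}{p}]$, assembles $T_p^{*}(C_p)$ as a degree-zero cuspidal divisor, and checks that its class vanishes in the finite cuspidal group $C\cong(\BZ/\tfrac{p-1}{(p-1,12)}\BZ)^{\oplus2}\oplus(\BZ/\tfrac{p+1}{(p+1,12)}\BZ)$. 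The delicate part is the bookkeeping of cusp equivalences at level $p^{2}$, namely reducing each $\tfrac{1}{pk}$ to a standard level-$p$ representative; equivalently one may compute $\alpha_{p,*}\beta_p^{*}f$ from the $\eta$-quotient expression for $f$ and verify directly that it is an $n$-th power up to a constant and a root of unity. Either route yields $T_p^{*}C_p=0$ and completes the proof.
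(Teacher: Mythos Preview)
Your framework is correct and coincides with the paper's: both reduce to showing that $T_\ell^{*}f/f^{e_\ell}$ is an $n$-th power times a constant, and for $\ell\neq p$ both invoke $T_\ell^{*}=T_{\ell,*}$ from Proposition~\ref{two kinds of Hecke operator}. For the crucial case $\ell=p$, the paper takes your route~(b) and carries it out explicitly: writing $T_{p,*}=\sum_{k=0}^{p-1}\begin{pmatrix}1&k\\0&p\end{pmatrix}$ and expanding the $\eta$-quotient, one finds
\[
\prod_{k=0}^{p-1} f\Bigl(\frac{z+k}{p}\Bigr)\ \approx\ \Bigl[\frac{\eta(z)}{\eta(pz)}\Bigr]^{p^{2}-1}\ =\ \Bigl(\frac{\Delta(z)}{\Delta(pz)}\Bigr)^{n}
\]
up to a constant (since $p^{2}-1=24n$), so $T_p^{*}f$ is visibly an $n$-th power of a modular unit and $\delta_F\circ T_p=0$. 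This sidesteps entirely the cusp bookkeeping you flagged as delicate in route~(a); on the other hand, your divisor-class formulation has the merit of making transparent that the whole lemma is an instance of the general criterion $T_\ell^{*}P=T_{\ell,*}P$ from Section~3.2, and would transfer more readily to Eisenstein ideals for which no closed $\eta$-identity is available.
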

\begin{proof}
It is enough to check the generators $T_l$ for primes $l$.

Suppose first that $l\neq p$. Let $\alpha_l,\ \beta_l:X_0(lp^2)\rightarrow X$ be the two morphisms sending $(E,C,D)$ ($C$ the $l$-part and $D$ the $p^2$-part) to  $(E,D)$ and $(E/C,(C+D)/D)$ respectively, then $T_l$ is by definition $\alpha_{l,*}\circ \beta^*_l$.

By the construction, $f(T_lz)=f(lz)\prod^{l-1}_{i=0}f(\frac{z+i}{l})=\alpha_{l,*}\circ \beta^*_l(f)$. As $div(\alpha_{l,*}\circ \beta^*_l(f))=\alpha_{l,*}\circ \beta^*_l[div(f)]=T_l(div(f))=(l+1)\cdot div(f)$, we have $f(T_lz)=f^{(l+1)}(z)$ up to constant. When take value on zero divisors, the effect of constants disappears, so we are done.

For $T_p$, recall that $\eta(z)=q^{\frac{1}{24}}\prod^{\infty}_{n=1}(1-q^n)$ ($q=e^{2\pi iz}$), so we have

\[f(T_pz)=\prod^{p-1}_{k=0}f(\frac{z+k}{p})\]

\[=\prod^{p-1}_{k=0}\frac{\eta(z+k)^{p+1}}{\eta(\frac{z+k}{p})\cdot\eta(pz+pk)^p}\]

\[\approx \prod^{p-1}_{k=0}\frac{\eta(z)^{p+1}}{\eta(\frac{z+k}{p})\cdot\eta(pz)^p}\] ($\approx$ means "equal up to constant")

\[=[\frac{\eta(z)}{\eta(pz)}]^{p^2-1}\cdot\frac{\eta^{p+1}}{\prod^{p-1}_{k=0}\eta(\frac{z+k}{p})\cdot\eta(pz)}\]

But we have

\[\eta(z)^{p+1}=q^{\frac{1+p}{24}}\prod^{\infty}_{n=1}(1-q^n)^{1+p}\]

\[\eta(pz)=q^{\frac{p}{24}}\prod^{\infty}_{n=1}(1-q^{pn})\] and

\[\prod^{p-1}_{k=0}\eta(\frac{z+k}{p})\approx q^{\frac{1}{24}}\prod^{p-1}_{k=0}\prod(1-q^{\frac{n}{p}}\zeta^{kn})\] ($\zeta$ a primitive $p$-th root of unity)

\[=q^{\frac{1}{24}}\cdot[\prod_{p\mid n}(1-q^{\frac{n}{p}})^p]\cdot[\prod_{p\nmid n}(1-q^n)]\]

So
\[f(T_pz)\approx[\frac{\eta(z)}{\eta(pz)}]^{p^2-1}\]
which implies that \[\delta_F\circ T_p=0\]

\end{proof}

By the Lemma just proved, we have, for any field, a $\BT$-module homomorphism
\[\delta_F:J(F)\bigotimes\BT/\BI\rightarrow F^{\times}\bigotimes\BZ/n\BZ\]
and hence for any $q|n$
\[\delta_{F,q}:J(F)\bigotimes\BT_{m_q}/\BI\BT_{m_q}\rightarrow F^{\times}\bigotimes\BZ/q^{n_q}\BZ\]
where $\BT_{m_q}$ is the completion of $\BT$ at $m_q$ and $n_q=ord_q(n)$.
\\

Now let $K$ be an imaginary quadratic field such that $(p)=\mathfrak{p}\bar{\mathfrak{p}}$ splits in $K$.

Let $x_K=(\BC/\CO_K\rightarrow\BC/{\fp}^{-1})\in X(H_K)$ and $y_K=\sum_{\sigma\in G(H_K/\BQ)}\epsilon(\sigma)\cdot x^{\sigma}_K\in J(K)^{-}$, where $\epsilon$ is the quadratic character corresponding to $K$. For any $q|n$, let $y^{(q)}_K$ be the projection of $y_K$ in $J^{(q)}(K)^{-}$.

\begin{thm}\label{Heegner point on q-Eisenstein}
Suppose $(q,6)=1$ and $q|(p+1)$. Let $h=h_k/o(\fp)$. Then if $ord_q(h)<ord_q(n)$, then $y^{(q)}_K$ is a non-torsion point in $J^{(q)}(K)^{-}$.
\end{thm}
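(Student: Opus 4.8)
The plan is to imitate the prime-level argument of Proposition~\ref{Gross1}, using the Eisenstein descent attached to $C_p$ in place of the one used there. Note first that $(q,6)=1$ together with $q\mid(p+1)$ forces $q\mid n$, so that $m_q$, the quotient $\widetilde J^{(q)}$ and Theorem~\ref{structure} are all available. By Lemma~\ref{Hecke algebra homomorphism}, $\delta_F$ is a homomorphism of $\BT$-modules with $\delta_F\circ T_p=0$; completing at $m_q$ gives
\[\delta_{K,q}\colon J(K)\otimes\BT_{m_q}/\BI\BT_{m_q}\longrightarrow K^{\times}\otimes\BZ/q^{n_q}\BZ,\qquad n_q=\ord_q(n).\]
It then suffices to prove two statements: that $\delta_{K,q}(y_K)\neq0$ under the hypothesis $\ord_q(h)<\ord_q(n)$, and that $J(K)^{-}[m_q]=0$, the latter allowing us to pass from ``nonzero'' to ``non-torsion''.

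First I would compute $\delta_{K,q}(y_K)$. Since $\epsilon$ is trivial on $\Gal(H_K/K)$ and $-1$ on the nontrivial coset, and $f$ is defined over $\BQ$, one has
\[\delta_K(y_K)=\prod_{\fa\in\Cl(K)}\frac{f(x_K^{\fa})}{\overline{f(x_K^{\fa})}},\qquad x_K^{\fa}=(\BC/\fa,\ \fp^{-2}\fa/\fa).\]
Because $f^{24}=\Delta(pz)^{p+1}\Delta(z)^{-1}\Delta(p^2z)^{-p}$ and $\Delta(\fc\fa)/\Delta(\fa)$ generates $\fc^{-12}$, the $\Delta(\fa)$-powers cancel (as $f$ has degree $0$) and $f^{24}(x_K^{\fa})$ generates $\fp^{12(p-1)}$; hence $f(x_K^{\fa})$ generates $\fp^{(p-1)/2}$. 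Taking the product over $\Cl(K)$ and dividing by the conjugate, exactly as in Proposition~\ref{Gross1} and Theorem~\ref{Eisenstein descent for rational divisor}, I expect
\[\delta_{K,q}(y_K)\equiv\zeta\cdot\alpha^{(p-1)h/2}\pmod{(K^{\times})^{q^{n_q}}},\]
where $\zeta$ is a root of unity, $(\alpha)=(\fp/\overline{\fp})^{o(\fp)}$ and $h=h_K/o(\fp)$. Since $(q,6)=1$, every root of unity of $K$ is a $q^{n_q}$-th power, so $\zeta$ may be discarded.

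The arithmetic of the exponent is where the hypothesis $q\mid(p+1)$ is used. As $q\geq5$ and $q\mid(p+1)$ we have $q\nmid(p-1)$ and $q\nmid2$, so $\ord_q\!\big((p-1)h/2\big)=\ord_q(h)$. Moreover $\alpha$ is not a $q$-th power in $K$: if $\alpha=x^q$ then $(x)=(\fp/\overline{\fp})^{o(\fp)/q}$, and from $\fp\overline{\fp}=(p)$ we get $[\overline{\fp}]=[\fp]^{-1}$ in $\Cl(K)$, so this ideal represents $[\fp]^{2o(\fp)/q}$, whose triviality would force $o(\fp)\mid 2o(\fp)/q$, i.e. $q\mid2$---impossible. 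Writing $(p-1)h/2=q^{\ord_q(h)}u$ with $(u,q)=1$ and using that $K$ contains no nontrivial $q$-power root of unity, one checks that $\alpha^{(p-1)h/2}\in(K^{\times})^{q^{n_q}}$ would already give $\alpha\in(K^{\times})^{q}$, contradicting the above. Hence $\delta_{K,q}(y_K)\neq0$, so $y_K$ is nonzero in $J(K)^{-}\otimes\BT_{m_q}$.

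Finally I would show $J(K)^{-}[m_q]=0$. By Theorem~\ref{structure}, $J[m_q]$ is the non-split extension $0\to\BZ/q\BZ\to J[m_q]\to\mu_q\to0$. The submodule $\BZ/q\BZ$ is $G_{\BQ}$-trivial, hence fixed by complex conjugation, while any $K$-rational point mapping nontrivially to $\mu_q$ would yield $\mu_q^{G_K}\neq0$, impossible since $[\BQ(\mu_q):\BQ]=q-1>2$ for $q\geq5$ means $\mu_q\not\subseteq K$. Thus $J[m_q](K)=\BZ/q\BZ\subseteq J(K)^{+}$, and as $q$ is odd $J(K)^{-}[m_q]=0$. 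Therefore $J(K)^{-}$ has no $m_q$-torsion, the nonzero image of $y_K$ in $J(K)^{-}\otimes\BT_{m_q}$ is non-torsion, and passing to the $q$-Eisenstein quotient shows $y_K^{(q)}$ is non-torsion in $\widetilde J^{(q)}(K)^{-}$. I expect the main obstacle to be the descent computation of the second paragraph---identifying the generated ideal and the exponent after descending from $H_K$ to $K$---with the observation that $q\mid(p+1)$ forces $\ord_q\!\big((p-1)h/2\big)=\ord_q(h)$ being the decisive point that makes the hypothesis $\ord_q(h)<\ord_q(n)$ exactly what is needed.
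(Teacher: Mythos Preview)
Your proposal is correct and follows essentially the same route as the paper: compute $\delta_{K,q}(y_K)$ via the $\eta$-quotient $f$ to obtain $u\cdot\alpha^{(p-1)h/2}$ with $(\alpha)=(\fp/\bar\fp)^{o(\fp)}$, argue that $\alpha$ is not a $q$-th power by the ideal-class trick, and then invoke Theorem~\ref{structure} to get $J(K)^{-}[m_q]=0$. You make explicit two points the paper leaves tacit---that $q\mid(p+1)$ is what forces $\ord_q\!\big((p-1)/2\big)=0$, and the detailed reason the non-split extension $0\to\BZ/q\BZ\to J[m_q]\to\mu_q\to0$ has trivial $K$-rational minus part---but the argument is the same.
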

\begin{proof}
By definition, we have

\[\delta_{K,q}(y_K)=\prod_{\fa\in Cl(\CO_K)}[\frac{\Delta(\fp\fa)^{p+1}}{\Delta(\fa)\Delta(\fp^2\fa)^p}\cdot\frac{\Delta(\overline{\fa})\Delta(\overline{\fp^2\fa})^p}{\Delta(\overline{\fp\fa})^{p+1}}]^{\frac{1}{24}}\]

As $\frac{\Delta(\fp\fa)^{p+1}}{\Delta(\fa)\Delta(\fp^2\fa)^p}=[\frac{\Delta(\fp\fa)}{\Delta(\fp^2\fa)}]^p\frac{\Delta(\fp\fa)}{\Delta(\fa)}$ and for any ideal $\fb\in Cl(\mathcal{O}_K)$, $\frac{\Delta(\fp\fa)}{\Delta(\fa)}$ is a number in $H_K$ generates $\fp^{-12}$, we find that \[\delta_{K,q}(y_K)=u\cdot\alpha^{\frac{p-1}{2}h}\pmod K^{\times n}\]
with $u\in \mathcal{O}^{\times}_K$ and $\alpha\in K^{\times}$ such that $(\alpha)=(\fp/\bar{\fp})^{o(\fp)}$.

Note that $\alpha$ is not a $q$-th power in $K$. This is because if $x^q=\alpha$, then $(x)=(\fp/\bar{\fp})^{o(\fp)/q}$. As $\bar{\fp}$ is the converse of $\fp$ in the ideal class group, we find that $o(\fp)|\frac{2 o(\fp)}{q}$, which is impossible as $q$ is odd.

So when $ord_q(h)<ord_q(n)$, we will have $\delta(y_K)\neq 0$, hence $y_k\neq 0$ in $J(K)^{-}\bigotimes\mathbb{T}_{m_q}$.

But by \~ref{structure}, one easily see that $J(K)^{-}[m_q]=0$, so all points in $J(K)^{-}\bigotimes\mathbb{T}_{m_q}$ is not torsion. This completes the proof.

\end{proof}

\subsection{Heegner point on Gross curves when $p\equiv7\pmod8$}
Let $i:A\hookrightarrow J$ be the sub-abelian variety as in section3.3 and $E$ the Gross curve as in Theorem~\ref{Q-curve}. Let $\pi: J\rightarrow A$ be the dual of $i$, where we have identify the dual of $A$ and $J$ with themselves.

Let $R_i=mC_i$ be of exact order $2$, for $i=1,p$.

\begin{lem}\label{R_1=R_p}
$R_1=R_2$.
\end{lem}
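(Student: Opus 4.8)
The plan is to reduce the identity $R_1=R_p$ to a purely group-theoretic statement about the $2$-torsion of the cuspidal group $C$. By construction $R_1=mC_1$ and $R_p=mC_p$ are both elements of \emph{exact} order $2$ in $C$, so they both lie in $C[2]$. Hence it suffices to prove that $C[2]\cong\BZ/2\BZ$, i.e. that $C$ possesses a \emph{unique} nonzero element of order $2$: once this is known, $R_1$ and $R_p$, being two nonzero elements of the same two-element group, must coincide.

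To establish that $C[2]$ is cyclic of order $2$, I would feed the explicit structure
\[
C \cong \bigl(\BZ/\tfrac{p-1}{(p-1,12)}\BZ\bigr)^{\oplus 2} \oplus \BZ/\tfrac{p+1}{(p+1,12)}\BZ
\]
recorded above (from \cite{Ling}) into a $2$-adic valuation computation, using the standing hypothesis $p\equiv 7\pmod 8$. This gives $v_2(p-1)=1$ and $v_2(p+1)\geq 3$, whence $v_2((p-1,12))=1$ and $v_2((p+1,12))=2$. Therefore $\tfrac{p-1}{(p-1,12)}$ is odd while $\tfrac{p+1}{(p+1,12)}$ is even, with $2$-adic valuation $v_2(p+1)-2\geq 1$. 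Consequently the $2$-Sylow subgroup of $C$ is concentrated entirely in the last cyclic factor and is itself cyclic, so $C[2]\cong\BZ/2\BZ$ as required. Along the way this also records $v_2(n)=v_2(p+1)-2\geq 1$, so that $2\mid n$ and the element $m=n/2$ of the hypothesis genuinely exists.

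It then remains only to confirm that $R_1$ and $R_p$ are nonzero elements of $C[2]$: since both $C_1$ and $C_p$ have exact order $n=\frac{p^2-1}{24}$ and $2\mid n$, the element $mC_i$ with $m=n/2$ has exact order $2$, hence lies in $C[2]\setminus\{0\}$; as this set has a single element, $R_1=R_p$ follows at once. The argument is essentially formal given the structure of $C$, so I do not expect a genuine obstacle; the only point requiring care is the $2$-adic bookkeeping of the invariant factors, namely verifying that $p\equiv 7\pmod 8$ forces exactly one cyclic summand of even order. (One could instead try to track $C_1,C_p$ under the Atkin--Lehner involution, which sends $C_1\mapsto -C_1$ and $C_p\mapsto C_p-(p-1)C_1$, but this does not appear to shorten the verification.)
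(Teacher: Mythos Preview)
Your proposal is correct and follows essentially the same approach as the paper: both argue that, under $p\equiv 7\pmod 8$, the $(p-1)$-related cyclic summands of $C$ (from Ling's structure theorem) have odd order, so the $2$-Sylow subgroup of $C$ is cyclic and $C[2]\cong\BZ/2\BZ$, forcing $R_1=R_p$. The only cosmetic difference is that the paper quotes Ling's theorem with denominators $(p-1,24)$ and $(p+1,24)$ rather than $(p-1,12)$ and $(p+1,12)$, but the $2$-adic conclusion is identical.
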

\begin{proof}
By Theorem 1 of \cite{Ling}, the prime to $p$ part of the $\BQ$-rational cuspidal divisor subgroup is isomorphic to $(\BZ/a\BZ)^{2}\bigoplus(\BZ/b\BZ)$, where $a=\frac{p-1}{(p-1,24)}$ and $b=\frac{p+1}{(p+1,24)}$. But as $p=7\pmod8$, $a$ is odd. So the $2$-part of the $\BQ$-rational cuspidal divisor subgroup is cyclic, hence the result.
\end{proof}

By Theorem 22.1.1 of \cite{G3}, we know that $A(\BQ)=<P>\cong \frac{\BZ}{2\BZ}$.

\begin{lem}\label{Hecke on R_i}
$R_1\in J[m_2]$.
\end{lem}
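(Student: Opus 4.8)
The plan is to unwind the definition of $J[m_2]$. Since $m_2=(2,\BI)$ is the maximal ideal of $\BT$ attached to the prime $q=2$, an element $x\in J$ lies in $J[m_2]$ precisely when $2x=0$ and $\BI\cdot x=0$. By construction $R_1$ has exact order $2$, so $2R_1=0$ is automatic, and the entire content of the statement reduces to the single assertion $\BI\cdot R_1=0$.

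First I would invoke Lemma~\ref{R_1=R_p}, which gives the identification $R_1=R_p$. This is the crucial reduction: $C_1$ itself need not be annihilated by $\BI$ — the Eisenstein ideal $\BI=(T_p,\{T_l-(1+l)\}_{l\neq p})$ was tailored to the Hecke action on $C_p$, not on $C_1$ — so there is no direct reason for $\BI$ to kill $R_1$. The only way to see it is to replace $R_1$ by the cuspidal divisor $R_p$ coming from the level-$p$ cusps, on which $\BT$ does act through the Eisenstein character.

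Next I would observe that, by the very definition of $\BI$, we have $T_p\cdot C_p=0$ and $(T_l-(1+l))\cdot C_p=0$ for all $l\neq p$, that is, $\BI\cdot C_p=0$; this was recorded when $\BI$ was introduced. Writing $R_p=\tfrac{n}{2}\,C_p$, the unique element of order $2$ in the cyclic group $\langle C_p\rangle$ of order $n$ (which exists since $2\mid n$ when $p\equiv7\pmod 8$), it follows that $\BI\cdot R_p=\tfrac{n}{2}\,\BI\cdot C_p=0$. Combining this with Lemma~\ref{R_1=R_p} gives $\BI\cdot R_1=\BI\cdot R_p=0$, and together with $2R_1=0$ this yields $R_1\in J[m_2]$.

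The hard part is not in this argument at all but in Lemma~\ref{R_1=R_p}, which supplies the equality $R_1=R_p$ by showing that the $2$-primary part of the rational cuspidal divisor subgroup is cyclic (so that both multiples $\tfrac{n}{2}C_1$ and $\tfrac{n}{2}C_p$ coincide with its unique element of order $2$). Once that cyclicity input is granted, the present lemma is immediate: the Eisenstein annihilation property of $C_p$ transports directly to $R_1$.
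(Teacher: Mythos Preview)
Your proof is correct. Both your argument and the paper's hinge on Lemma~\ref{R_1=R_p}, but you deploy it differently: you transfer $R_1$ to $R_p$ at the outset and then invoke the built-in fact $\BI\cdot C_p=0$, which handles all generators of $m_2$ in one stroke. The paper instead works with $C_1$ directly, noting (implicitly) that $(T_l-(1+l))C_1=0$ for $l\neq p$ so that ``only $T_p$ needs checking,'' and then computes $T_p C_1=C_1+C_p$ explicitly from the matrix description $T_p=\sum_{j=0}^{p-1}\left(\begin{smallmatrix}1&j\\0&p\end{smallmatrix}\right)$; multiplying by $m$ gives $T_pR_1=R_1+R_p$, which vanishes by Lemma~\ref{R_1=R_p}. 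Your route is a bit cleaner since it avoids that explicit cusp computation and treats all Hecke operators uniformly; the paper's route has the minor advantage of making visible the formula $T_pC_1=C_1+C_p$, which explains why $C_1$ itself just barely fails to lie in $J[\BI]$.
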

\begin{proof}
Only need to check this for $T_p$. But $T_p(R_1)=T_p(mC_1)m(C_1+C_p)=0$ as $T_p=\sum_{0\leq j\leq p-1}\left(
                                                                                            \begin{array}{cc}
                                                                                              1 & j \\
                                                                                              0 & p \\
                                                                                            \end{array}
                                                                                          \right)
$, so the result follows from Lemma~\ref{R_1=R_p}.
\end{proof}

\begin{prop}\label{Key}
$P\in J[m_2]$
\end{prop}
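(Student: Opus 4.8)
The plan is to reduce the assertion to a computation on $A$ and then treat the generators of $\BI$ one family at a time. Since the inclusion $i\colon A\hookrightarrow J$ is Hecke-equivariant and $A$ is Hecke-stable, we have $\BI\cdot i(P)=i(\BI\cdot P)$ and $2\,i(P)=i(2P)=0$; thus $P\in J[m_2]$ will follow once we show that $\BI$ annihilates $P$ inside $A(\BQ)$. Recall that on $A$ every Hecke operator $T_n$ acts as $\theta(a_n)$, where the $a_n$ are the Fourier coefficients of the CM newform $f_\phi$. So it suffices to prove $\theta(a_p)P=0$ and $\theta\bigl(a_l-(1+l)\bigr)P=0$ for every prime $l\neq p$.

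First I would dispose of $T_p$. Because $\phi$ has conductor $(\sqrt{-p})$ and the only ideal of norm $p$ is the ramified prime $(\sqrt{-p})$, which is not coprime to $p$, the $p$-th coefficient of $f_\phi=\sum_{(\fa,p)=1}\phi(\fa)q^{N\fa}$ vanishes, i.e. $a_p=0$; hence $T_pP=\theta(a_p)P=0$. Next, for odd primes $l\neq p$ I would argue by reduction modulo $l$. The Jacobian $J=J_0(p^2)$ has good reduction at $l$, so $P$ specializes to an $\BF_l$-rational $2$-torsion point $\widetilde P$, and specialization is injective on prime-to-$l$ torsion. By the Eichler--Shimura relation $T_l=\Frob_l+V_l$ on $J_{\BF_l}$ with $\Frob_l V_l=[l]$; since $\Frob_l\widetilde P=\widetilde P$ we get $V_l\widetilde P=l\widetilde P$, whence $T_l\widetilde P=(1+l)\widetilde P$. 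Therefore $(T_l-(1+l))P$ is a $\BQ$-rational $2$-torsion point reducing to $0$ modulo $l$, and injectivity forces $(T_l-(1+l))P=0$.

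The remaining and hardest case is $l=2$, where the reduction argument breaks down because specialization modulo $2$ is not injective on $2$-torsion; equivalently, $\BZ/2\BZ$ and $\mu_2$ are indistinguishable as $G_\BQ$-modules, which is exactly why the structure theorem for $J[m_q]$ was restricted to $(q,6)=1$. Here I would use the CM description of $A$: since $p\equiv7\pmod8$ the prime $2$ splits in $K$ as $(2)=\omega\bar\omega$, any Gross curve $E$ has $E[2]\cong\CO/2\CO\cong\BF_2\times\BF_2$ as an $\CO$-module, and $a_2=\phi(\omega)+\phi(\bar\omega)$. The goal is $\theta(a_2-3)P=0$, i.e. $T_2P=3P=P$. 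I would pin down which of the three nonzero $2$-torsion points the $\BQ$-rational $P$ is, using the distinguished points $P_1\equiv O\pmod\omega$ and $P_2\equiv O\pmod{\bar\omega}$ of Lemma~\ref{Lem5}, and compute the action of $\phi(\omega)+\phi(\bar\omega)$ on it modulo $2$. Conceptually this amounts to showing that $P$ lies in the Eisenstein eigenspace $A[\fq]$ for the prime $\fq\mid 2$ of $\BT/\BI$: once $P\in A[\fq]$, the congruence $a_2\equiv 1+2\equiv 1\pmod\fq$ coming from $A\hookrightarrow\widetilde J^{(2)}_+$ gives $a_2-3\equiv0\pmod\fq$ and hence the claim at once (and in fact handles all $l$ uniformly). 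Combining the three cases yields $\BI\cdot P=0$, and therefore $P\in J[m_2]$.

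I expect the $l=2$ step to be the main obstacle: one must make the mod-$2$ Eisenstein congruence visible on the single rational point $P$ rather than merely on Hecke eigenvalues, and precisely because mod $2$ one cannot separate the multiplicative and étale parts of $A[2]$ by the Galois-module structure alone, this forces the explicit CM computation (the splitting of $2$ in $K$ and the action of $\omega,\bar\omega$ on $E[2]$) rather than a purely structural argument.
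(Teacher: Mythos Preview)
Your treatment of $T_p$ (via $a_p=0$) and of odd primes $l\neq p$ (via Eichler--Shimura and injectivity of reduction modulo $l$ on prime-to-$l$ torsion) is correct and matches the paper's argument in spirit. The divergence is entirely at $l=2$, and there your proposal has a gap while the paper takes a cleaner route.

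You assert that ``specialization modulo $2$ is not injective on $2$-torsion'' and therefore abandon the reduction argument in favor of an explicit (and unfinished) computation of $a_2$ acting on $E[2]$. The paper's point is precisely that this is unnecessary: while reduction mod $2$ is certainly not injective on all of $A[2]$, it \emph{is} injective on the rational part $A(\BQ)[2]=\langle P\rangle$. This is Lemma~\ref{injective}, and its proof is a short CM argument of the same flavor you had in mind: if $\widetilde P=0$ at a place $w\mid 2$ of $H$, then $P$ lies in the formal group $\widehat E$ at $w$, which is Lubin--Tate for the prime $\fp$ of $K$ below $w$; hence $P\in E[\fp^{\infty}]$, contradicting $P\in E(F)$ with $F=H^{\tau}$ (complex conjugation swaps $\fp$ and $\bar\fp$). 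Once $A(\BQ)[2]\hookrightarrow A(\BF_2)$ is known, your own Eichler--Shimura argument applies verbatim at $l=2$: $(T_2-3)\widetilde P=0$ in $A(\BF_2)$, and injectivity on $A(\BQ)[2]$ forces $(T_2-3)P=0$.

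Your alternative suggestion --- use $A\hookrightarrow\widetilde J^{(2)}_{+}$ to import the congruence $a_l\equiv 1+l\pmod{\fq}$ --- also has a gap: that inclusion only tells you \emph{some} prime $\fq$ of $\CO_{T^{+}}$ above $2$ is Eisenstein, not that this $\fq$ coincides with the annihilator $\fq'$ of $P$ in $\CO_{T^{+}}$. Identifying $\fq=\fq'$ is exactly the content of the proposition you are trying to prove, so this route is circular unless supplemented by another argument. The paper sidesteps this by never needing to name the prime: injectivity of reduction plus Eichler--Shimura gives $(T_l-(1+l))P=0$ directly for every $l\neq p$.
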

\begin{proof}
By Shimura's theorems, we know that $T_p=0$ on $A$ as $a_p=0$, and $(l+1)-T_l(P)=(l+1-\theta(a_l))(P)$ for $l\neq p$. To prove the theorem, it is sufficient to show $2\mid deg(l+1-\theta(a_l))$. By the construction of $A$, $deg(l+1-\theta(a_l))=|det(l+1-\theta(a_l))|^2=N_{T^{+}/\BQ}(l+1-\theta(a_l))$. As $N_{T^{+}/\BQ}(l+1-\theta(a_l))=\#A\pmod l(\BF_l)$, the result follows form Lemma~\ref{injective}.
\end{proof}

\begin{lem}\label{injective}
$A(\BQ)[2]\hookrightarrow A\pmod l(\BF_l)$ for any $l\neq p$.
\end{lem}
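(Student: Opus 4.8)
The plan is to combine good reduction of $A$ away from $p$ with the explicit ordinary-reduction picture of the Gross curve at $2$ furnished by Lemma~\ref{Lem5}. Since the generator $P$ of $A(\BQ)[2]\cong\BZ/2\BZ$ is the only nonzero element to control, the lemma amounts to the assertion that the reduction map $A(\BQ)\to\CA(\BF_l)$ is nonzero on $P$, where $\CA$ is the abelian-scheme (N\'eron) model of $A$ over $\BZ_{(l)}$. First I would record that $A$ has good reduction at every $l\neq p$: by Proposition~\ref{Q-curve} we have $A\cong\Res_{F/\BQ}E$ with $F=\BQ(j(E))\subseteq H$, and since $H/K$ is unramified while $K/\BQ$ ramifies only at $p$, the field $F$ is unramified over $\BQ$ away from $p$, so Weil restriction of the good reduction of $E$ yields good reduction of $A$ there.

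For odd $l\neq p$ the claim is immediate. The kernel of the reduction map $\CA(\BZ_l)\to\CA(\BF_l)$ is the group $\hat\CA(l\BZ_l)$ of points of the formal group along the identity section, which is a pro-$l$ group; as $l$ is odd it contains no nonzero $2$-torsion, so $A(\BQ)[2]$ injects into $\CA(\BF_l)$.

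The substantive case is $l=2$, where the kernel of reduction is a pro-$2$ group and could a priori contain $P$. Here I would use the identifications $A(\BQ)=(\Res_{F/\BQ}E)(\BQ)=E(F)$ and, since $F/\BQ$ is unramified at $2$, $\CA(\BF_2)=\prod_{v\mid 2}\tilde E(\kappa_v)$, the product over the places $v$ of $F$ above $2$ with residue fields $\kappa_v$; under these the reduction of $P$ is the tuple of its reductions $P\bmod v$. By Lemma~\ref{Lem5}, $E$ has good ordinary reduction at the two places $\omega,\bar\omega$ of $K$ above $2$ (the analysis there is stated for $E(p)$, but any two Gross curves are linked by an odd-degree $H$-isogeny as in Lemma~\ref{Lem9}, which is an isomorphism on $2$-torsion and commutes with reduction, so it transports the picture to $E$). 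Ordinary reduction forces the kernel of reduction on $E[2]$ at a place $w\mid\omega$ of $H$ to have order exactly $2$ — it is the $2$-torsion of the connected part $\hat E\cong\hat{\BG}_m$ — and Lemma~\ref{Lem5} identifies it as $\{O,P_1\}$; likewise the kernel at a place above $\bar\omega$ is $\{O,P_2\}$. Since $P_1\neq P_2$, no nonzero $2$-torsion point of $E$ reduces to $O$ at a place above $\omega$ and at a place above $\bar\omega$ simultaneously. Consequently the generator $P$ of $E(F)[2]$ reduces nontrivially at some place of $H$ above $2$, hence at the place of $F$ below it, and its image in $\CA(\BF_2)$ is nonzero.

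The main obstacle is precisely the prime $l=2$: unlike the odd case it is not formal, and it is resolved entirely by the ordinary-reduction input of Lemma~\ref{Lem5}, which pins down exactly which $2$-torsion point is annihilated at each place above $2$. The only routine verifications left are that good ordinary reduction makes the kernel of reduction on $E[2]$ have order $2$, and the compatibility of reduction over $F$ with reduction over $H$ (a point that vanishes modulo a place $v$ of $F$ also vanishes modulo every place of $H$ above $v$), which is what lets the $H$-level computation descend to the $\BF_2$-points of $A$.
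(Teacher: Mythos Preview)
Your argument is correct and follows essentially the same line as the paper's proof. Both treat odd $l\neq p$ by observing that the kernel of reduction is a pro-$l$ group, and both handle the crucial case $l=2$ by exploiting the ordinary reduction of $E$ at the places above $\omega$ and $\bar\omega$ to identify the $2$-torsion in the kernel of reduction.

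The only difference is one of packaging at $l=2$. The paper argues directly via Lubin--Tate: if $P$ lies in the formal group $\hat E$ at a place $w\mid\fp$ (with $\fp\in\{\omega,\bar\omega\}$), then $P\in E[\fp^\infty]$, and since $P\in E(F)$ with $F=H^{\tau}$ one gets $P=\tau(P)\in E[\bar\fp^\infty]$ as well, forcing $P=O$. You instead invoke Lemma~\ref{Lem5} to name the kernels explicitly as $\{O,P_1\}$ and $\{O,P_2\}$ and use $P_1\neq P_2$. Your route has the mild advantage that it shows no nonzero element of $E(H)[2]$ can die at all places of $H$ above $2$, without appealing to the $F$-rationality of $P$; the paper's route is a touch more conceptual but uses that $F$-rationality in an essential way. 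The transfer from $E(p)$ to the Gross curve $E$ of Proposition~\ref{Q-curve} via an odd-degree $H$-isogeny is exactly the right bridge.
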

\begin{proof}

As $A$ has good reduction at $l$ when $l\neq p$, we know that $A(\BQ)[2]\hookrightarrow A\pmod l(\BF_l)$ for any $l\nmid 2p$.

So we only need to show $A(\BQ)[2]\hookrightarrow A\pmod 2(\BF_2)$.

Consider the sub-abelian variety $E$ of $A$ over $H$ as in Prop~\ref{Q-curve}. Let $w$ be a place of $H$ over $2$ and $\fp$ the prime of $K$ below $w$.
Suppose $P\pmod2=0$. Then in terms of $E$, we have $P\in\widehat{E}$ where $\widehat{E}$ is the formal group at $w$. But $\widehat{E}$ is a lubin-Tate formal group, which implies that $P\in E[\fp^{\infty}]$. This contradicts that $P\in E(F)$ and $F=H^{\tau}$ where $\tau$ is the complex multiplication.
\end{proof}

\begin{thm}\label{Heegner point on Gross curve}
Suppose $p=7\pmod8$, $K$ an imaginary quadratic field in which $p$ splits and $y_K$ be the Heegner point. If $\widetilde{J}^{(2)}$ is simple and the class number of $K$ is odd, then the projection of $y_K$ on $A$ is not torsion.
\end{thm}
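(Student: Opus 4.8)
The plan is to deduce the result from Birch's Lemma applied to the morphism $f=\pi\circ i\colon X\to A$, where $i\colon X\to J$ is the canonical embedding $x\mapsto[(x)-(\infty)]$ and $\pi\colon J\to A$ is the projection dual to $i$. First I would record the action of the Atkin--Lehner involution: by Proposition~\ref{Gross curve and 2-Eisenstein} we have $A\hookrightarrow\widetilde{J}^{(2)}_{+}$, and since $\widetilde{J}^{(2)}_{+}$ is by definition the locus where $w_p+1=0$, the involution $w_p$ acts as $-1$ on $A$; moreover $\pi$ is $w_p$-equivariant because $i$ is. Using $w_p(\infty)=0$ one checks on $J$ that $i(w_p x)=w_p\,i(x)+C_1$, where $C_1=[(0)-(\infty)]$, so applying $\pi$ gives $f^{w_p}(x)=f(w_p x)=\pi(w_p\,i(x))+\pi(C_1)=-f(x)+\pi(C_1)$. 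Hence $f+f^{w_p}\equiv\pi(C_1)$ is a constant morphism whose value is the image of the cuspidal class $C_1$.

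Next I would reduce the hypotheses of Birch's Lemma to the single assertion $\pi(C_1)\neq0$. By Theorem~22.1.1 of \cite{G3} (applicable since $p\equiv7\pmod8$) we have $A(\BQ)=\langle P\rangle\cong\BZ/2\BZ$, so that $2\,A(\BQ)=0$. As $C_1\in J(\BQ)$ and $\pi$ is defined over $\BQ$, the value $\pi(C_1)$ lies in $A(\BQ)$; therefore the constant $f+f^{w_p}=\pi(C_1)$ lies outside $2\,A(\BQ)$ if and only if $\pi(C_1)\neq0$, i.e.\ if and only if $\pi(C_1)=P$.

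The main step, which I expect to be the real obstacle, is to prove $\pi(C_1)\neq0$. Here the simplicity hypothesis enters: since $A$ is a nonzero abelian subvariety of the simple abelian variety $\widetilde{J}^{(2)}$, it must be all of $\widetilde{J}^{(2)}$, so that $\pi$ agrees up to isogeny with the quotient map onto the $2$-Eisenstein quotient. Because $R_1=m\,C_1$ is a nonzero multiple of $C_1$, it suffices to show $\pi(R_1)\neq0$. By Lemma~\ref{R_1=R_p} we have $R_1=R_p$, the order-$2$ part of the rational cuspidal point $C_p$, and by Lemma~\ref{Hecke on R_i} it lies in $J[m_2]$; the image of this cuspidal point generates the rational torsion of the Eisenstein quotient (the level-$p^2$ analogue of Mazur's theorem that the cuspidal group maps isomorphically onto $\widetilde{J}^{(2)}(\BQ)_{\tor}$), so its image in $A=\widetilde{J}^{(2)}$ is nonzero. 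This gives $\pi(R_1)\neq0$, hence $\pi(C_1)=P$. The delicate points are precisely this Mazur-type non-vanishing at level $p^2$ and the check that the identification $A=\widetilde{J}^{(2)}$ together with the isogeny $\pi\circ i$ do not annihilate the relevant $2$-torsion class; here the facts that both $R_1$ and $P$ lie in $J[m_2]$ (Lemma~\ref{Hecke on R_i} and Proposition~\ref{Key}) are what pin down $\pi(C_1)$.

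Finally, with $f+f^{w_p}=P\notin 2\,A(\BQ)$ in hand, Birch's Lemma yields that the image of the Heegner point $y_K$ on $A$ is non-torsion. The oddness of $h_K$ is used, as in the prime-level Theorem~\ref{Heegner point on 2-Eisenstein}, in the passage from the CM point $x_K$ over the ray class field to $y_K$ over $K$: the trace over the class group of size $h_K$ preserves the mod-$2$ information only when $h_K$ is odd. I expect the computation of $f+f^{w_p}$ and the $w_p$-equivariance of $\pi$ to be routine bookkeeping, while the genuine difficulty is the non-vanishing $\pi(C_1)\neq0$, which is where both the simplicity of $\widetilde{J}^{(2)}$ and the structure of $J[m_2]$ are essential.
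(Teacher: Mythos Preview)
Your setup is correct and matches the paper: Birch's Lemma applied to $f=\pi\circ i$, the computation $f+f^{w_p}=\pi(C_1)$ using $w_p=-1$ on $A$ and $w_p$-equivariance of $\pi$, and the reduction (via $A(\BQ)\cong\BZ/2\BZ$) to the single assertion $\pi(C_1)\neq0$. The difficulty, as you correctly identify, is entirely in this last step.

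The gap is in your argument for $\pi(C_1)\neq0$. You invoke a ``level-$p^2$ analogue of Mazur's theorem'' asserting that the cuspidal class survives in $\widetilde{J}^{(2)}(\BQ)_{\tor}$, but no such result is available here: the paper proves only Theorem~\ref{index} and Theorem~\ref{structure} at level $p^2$, and the latter explicitly requires $(q,6)=1$, so it says nothing for $q=2$. Even granting that $R_1$ has nonzero image in $\widetilde{J}^{(2)}$, you would still need to rule out that the isogeny identifying $A$ with $\widetilde{J}^{(2)}$ kills this $2$-torsion class---a point you flag but do not resolve. Both obstacles are genuine.

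The paper avoids them by arguing by contradiction \emph{inside $J$}, never passing to the quotient. Suppose $\pi(C_1)=0$ and set $B=\ker(\pi)$. Then one exhibits a nonzero element of $B[m_2]$: the paper phrases this as $P\in B[m_2]$ via Proposition~\ref{Key}; one may equally observe that $R_1=mC_1\in B$ and $R_1\in J[m_2]$ by Lemma~\ref{Hecke on R_i}. From $B[m_2]\neq0$ it follows that some newform component $A_g\subseteq B$ has $A_g[m_2]\neq0$, hence $A_g$ is a simple factor of $\widetilde{J}^{(2)}$. But $A\not\subseteq B$ since $\pi\circ i$ is an isogeny on $A$, so $A_g\neq A$, contradicting the simplicity of $\widetilde{J}^{(2)}$. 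This argument requires neither a Mazur-type torsion theorem at level $p^2$ nor any control of $2$-torsion under the isogeny $A\sim\widetilde{J}^{(2)}$.

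A minor point: the oddness of $h_K$ enters through Birch's Lemma itself, via the identity $f(y_K)+\overline{f(y_K)}=h_K\cdot\pi(C_1)$, rather than in a trace defining $y_K$.
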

\begin{proof}
When $p=7\pmod8$, the $\epsilon$-factor is $1$. So to apply Birch's Lemma, we only need to verify $\pi(C_1)$ is not in $2\cdot A(\BQ)$.

Suppose $\pi([0]-[\infty])\in2\cdot A(\BQ)$, then $\pi(C_1)=0$ as we know that $A(\BQ)\simeq\BZ/2\BZ$. Let $B=ker(\pi)$, then we have $P\in B[m_2]$ by Proposition~\ref{Key}. So there is a newform $g\in S^{new}(\Gamma_(p^2))$ such that $A_g\subseteq B$ and $A_g[m_2]\neq0$, in particular this sub-abelian variety $A_g$ will be contained in $\widetilde{J}^{(2)}$.

But $A\nsubseteq B$ as the composition $\pi\circ i$ is multiplication by $deg(\pi)$, so $A\neq A_g$ which contradicts our assumption that $\widetilde{J}^{(2)}$ is simple.
\end{proof}

\end{document}